\newtheorem{theo}{Theorem}[section]
\newtheorem{lemm}[theo]{Lemma}
\newcounter{c}
\newcounter{d}
\newcounter{b}
\newcommand{\cc}[1][]{\refstepcounter{c}#1\arabic{c}} 
\newcommand{\ce}[1][]{\refstepcounter{d}#1\arabic{d}}
\newcommand{\be}[1][]{\refstepcounter{b}#1\arabic{b}}
\newcommand{\res}{\mathop{\hbox{\vrule height 7pt width 
0.5pt depth 0pt \vrule height 0.5pt width 6pt depth 0pt}}\nolimits}
\newcommand{\R}{\mathbb{R}} 
\newcommand{\e}{\varepsilon} 
\newcommand{\spt}{\mathrm{spt}} 
\title[A diffused interface with the advection term]{A diffused interface with the advection term in a Sobolev space}
\date{}
\author{Yoshihiro Tonegawa}
\address{Department of Mathematics, Tokyo Institute of Technology,
152-8551, Tokyo, Japan}
\email{tonegawa@math.titech.ac.jp}
\author{Yuki Tsukamoto}
\address{Department of Mathematics, Tokyo Institute of Technology,
152-8551, Tokyo, Japan}
\email{tsukamoto.y.ag@m.titech.ac.jp}
\thanks{The first author is partially supported by JSPS KAKENHI Grant Numbers (A) 25247008 and (S) 26220702.}
\begin{document}

\maketitle
\begin{abstract}
We study the asymptotic limit of diffused surface energy in the van der Waals--Cahn--Hillard theory when an advection term is added and the energy is
uniformly bounded. We prove that the limit interface is an integral
varifold and the generalized mean curvature vector is determined by the advection term. 
As the application, a prescribed mean curvature problem is solved using the min-max method.
\end{abstract}

\section{Introduction}
The object of study in this paper is the energy functional appearing in the van der
Waals--Cahn--Hillard theory \cite{WC58,EG87}, 
\begin{eqnarray}
E_{\varepsilon}(u)= \int_\Omega \frac{\varepsilon|\nabla u|^2}{2}+\frac{W(u)}{\varepsilon},
\end{eqnarray}
where $u:\Omega\subset \mathbb{R}^n \to \mathbb{R}$ ($n \geq 2$) is the normalized density distribution of two phases of a material, $|\nabla u|^2=\sum_{k=1}^n(\partial u/\partial x_k)^2$
and $W:\mathbb{R} \to [0,\infty)$ is a double-well potential with two global minima at $\pm1$. In the thermodynamic context, $W$ corresponds
 to the Helmholtz free energy density and the typical example is
 $W(u)=(1-u^2)^2$. When the positive parameter $\e$ is small relative to
 the size of the domain $\Omega$ and $E_{\e}(u)$ is bounded, it is expected that 
 $u$ is close to $+1$ or $-1$ on most of $\Omega$ while a spatial change between $\pm 1$ 
 occurs within a hypersurface-like region of $O(\e)$ thickness which we may call the {\it diffused interface} of $u$. In this case, the quantity $E_{\e}(u)$ is expected to be proportional to 
 the surface area of the diffused interface. Due to the importance of the surface area in calculus of variations, it is interesting to investigate the validity of such expectation and other salient properties of $E_{\e}$. 
 
In this direction, there have been a number of works studying the asymptotic behavior of $E_{\e}$ as $\e\to 0+$ under various assumptions. 
For the energy minimizers with appropriate side conditions, 
it is well-known that it $\Gamma$-converges to the area
functional of the limit interface
\cite{KS,LM,LM87,MM,PS88}. On the other hand, due in part to the non-convex nature of the functional,
there may exist multiple and even infinite number of
critical points of $E_{\e}$ different from the energy minimizers. For general critical 
points, Hutchinson and the first author \cite{JH00} proved that
the limit is an integral stationary varifold \cite{WA72}. For general
{\it stable} critical points, 
the first author and Wickramasekera \cite{TW12} 
proved that the limit is an embedded 
real-analytic minimal hypersurface except for a closed singular set of codimension seven.
More recently, Guaraco \cite{Guaraco} showed that a uniform Morse index 
bound is sufficient to conclude the
same regularity for $n\geq 3$ and gave a new proof of Almgren-Pitts theorem \cite{Pitts}
as the application. The new proof significantly simplifies the existence part of the proof 
even though one needs to use Wickramasekera's hard regularity theorem \cite{Wick15}. 

While the investigations on the critical points of $E_\e$ have direct links to the minimal surface
theory as above, more generally, it turned out that suitable controls 
of the first variation of $E_{\e}$
guarantee the analogous good asymptotic behaviors. For example, under the 
assumption that 
\[ \liminf_{\e\to 0+} \big(E_{\e}(u_{\e})+\|f_{\e}\|_{W^{1,p}(\Omega)}\big)<\infty\]
with $f_{\e}:=-\e\Delta u_\e+W'(u_{\e})/\e$ and $p>n/2$, the first author \cite{TY02,TY05} proved that the limit interface is an integral varifold whose
generalized mean curvature belongs to $L^{q}$ ($q=p(n-1)/(n-p)>n-1$) with respect to the 
surface measure. Here $W^{1,p}(\Omega):=\{u\in L^p(\Omega)\,:\, \nabla u\in L^p(\Omega)\}$. The mean curvature of the limit interface is characterized by the weak $W^{1,p}$ limit 
of $f_{\e}$ \cite{RT}. 
Another example concerns one of De Giorgi's conjectures. Under the assumption that (with
$f_\e$ as above) 
\[
\liminf_{\e\to 0+}\big(E_{\e}(u_{\e})+\e^{-1}\|f_{\e}\|_{L^2(\Omega)}^2\big)<\infty\]
and $n=2,3$, R\"{o}ger-Sch\"{a}tzle \cite{RS} (independently \cite{NT} for the case of $n=2$) proved the similar
result. In this case, the limit interface has an $L^2$ generalized mean curvature. 

In this paper, along the line of research described above, we investigate the asymptotic 
behavior of $u_{\e}$ satisfying
\begin{equation}
- \varepsilon \Delta u_{\e} + \frac{W'(u_{\e})}{\varepsilon}= \varepsilon v_\e \cdot \nabla u_{\e}, \label{01}
\end{equation}
where $v_{\e}$ is considered here as a given vector field and we assume that
\[
\liminf_{\e\to 0+}\big(E_{\e}(u_{\e})+\|v_{\e}\|_{W^{1,p}(\Omega)}\big)<\infty
\]
and $p>n/2$. The problem is related to (parabolic) Allen-Cahn-type equations 
studied in \cite{LST1,KT16},
for example. It is also natural to investigate the effect of advection term
as $\e\to 0+$. We prove the analogous result Theorem \ref{maintheorem} to \cite{TY02,TY05}, namely,
the limit is an integral varifold with $L^q$ (the same as above) generalized
mean curvature which is characterized by the weak $W^{1,p}$ limit of $v_{\e}$. Using 
this result, we give some existence theorem for a {\it vectorial prescribed 
mean curvature problem}, as described in Theorem \ref{vmcp}. Despite the simplicity
of the problem, this is the first existence result in the 
setting of the min-max method, with minimal 
regularity assumptions on the prescribed vector field. 
As for the existence problem for {\it scalar} constant or prescribed mean curvature using
a min-max approach along the lines of Almgren-Pitts \cite{Pitts}, we mention 
papers by X. Zhou and J.J. Zhu \cite{ZZ1,ZZ2}. 

As for the proof, just as in the case of \cite{JH00,TY02,TY05},
the key point is to prove a certain monotonicity-type formula which is the essential 
tool in the setting of Geometric Measure Theory. We wish to treat $\e v_\e\cdot \nabla u_\e$ as a
perturbative term, and to do so, we need to control a certain ``trace'' norm of $v_\e$ 
on diffused interface. If an $\e$-independent upper density ratio estimate of diffused 
surface measure is available, then we can control $\e v_\e\cdot\nabla u_\e$ by the
$W^{1,p}$-norm of $v_\e$. For this purpose, we establish the key estimate,
Theorem \ref{theo5}, which gives a local uniform upper density ratio estimate.  
Once this part is done, the rest proceeds just like \cite{TY05} with minor
modifications. 

The paper is organized as follows. In Section 2 we state our assumptions and 
explain the main results. Section 3 contains the main estimates which ultimately give a monotonicity-type
formula, Theorem \ref{lemma6a}. In Section 4, we prove the main theorem by modifying
the proof in \cite{TY02,TY05}, and in Section 5, we give some concluding remarks.

\section{Assumptions and main results}

We use the notation that $U_r(a):=\{x\in\R^n\,:\, |x-a|<r\}$, $B_r(a):=\{x\in\R^n\,:\,
|x-a|\leq r\}$, $U_r:=U_r(0)$ and $B_r:=B_r(0)$. 
\subsection{Assumptions}
Throughout the paper, we assume that:
\begin{itemize}
\item[(a)] The function $W: \mathbb{R} \to [0,\infty)$ is $C^3$ and has two strict minima $W(\pm 1)=W'(\pm 1)=0$.
\item[(b)] For some $\gamma \in(-1,1)$, $W'>0$ 
on $(-1,\gamma)$ and $W'<0$ on $(\gamma,1)$.
\item[(c)] For some $\alpha \in (0,1)$ and $\kappa>0$, $W''(x)\geq \kappa$ for all $|x| \geq \alpha$.
\end{itemize}
Let $\Omega\subset\R^n$ be a bounded domain. We assume that we are given $W^{1,2}(\Omega)$ functions $\{u_i\}^\infty_{i=1}$, $W^{1,p}(\Omega; \mathbb{R}^n)$ vector fields $\{v_i\}^\infty_{i=1}$ and positive constants $\{\varepsilon_i\}^\infty_{i=1}$ satisfying
\begin{equation}
-\varepsilon_{i} \Delta u_{i} + \frac{W'(u_{i})}{\varepsilon_{i}} = \varepsilon_{i} v_{i} \cdot \nabla u_{i}\label{eq1}
\end{equation}
weakly on $\Omega$
for each $i\in\mathbb N$. In addition, assume that
\begin{equation}
\label{re-eq3}
\lim_{i \to \infty}\varepsilon_i=0, \ \ \  \frac{n}{2} < p<n
\end{equation}
and that there exist constants $c_{\cc[\label{c0}]}$, $E_0$ and $\lambda_0$ such that, for all
$i\in\mathbb N$, we have:
\begin{equation}
\|u_i\|_{L^{\infty}(\Omega)}\leq c_{\ref{c0}},  \label{eq1b}
\end{equation}
\begin{equation}
\int_{\Omega} \left(\frac{\varepsilon_i |\nabla u_i|^2}{2}+\frac{W(u_i)}{\varepsilon_{i}}\right) \leq E_0, \label{eq1c}
\end{equation}
\begin{equation}
\|v_i\|_{L^{\frac{np}{n-p}}(\Omega)}+\|\nabla v_i\|_{L^{p}(\Omega)} \leq \lambda_0.\label{eq1d}
\end{equation}

The condition \eqref{eq1b} is not essential and can be often derived from the PDE 
or the proof of existence. Here we assume \eqref{eq1b} for simplicity. 
Next, define
\begin{eqnarray*}
\Phi (s) := \int^s_{-1} \sqrt{W(t)/2} \ dt ,\,\,\,
w_i(x):= \Phi(u_i(x)).
\end{eqnarray*}
By the Cauchy--Schwarz inequality and (\ref{eq1c}), we obtain
\[
\int_{\Omega} |\nabla w_i| \leq \frac{1}{2}\int_{\Omega} \left(\frac{\varepsilon_i |\nabla u_i|^2}{2}+\frac{W(u_i)}{\varepsilon_i}\right) \leq \frac{1}{2}E_0 .
\]
Hence, by the compactness theorem for $BV$ functions \cite[Corollary 5.3.4]{WP72}, there exist a converging subsequence (which we denote by the same notation) $\{w_i\}$ in the $L^1$ norm
 and the limit BV function $w$. Define
\[
u (x): = \Phi^{-1}(w (x)).
\]
where $\Phi^{-1}$ is the inverse function of $\Phi$. It follows that $u_i$ converges to $u$ a.e. on $\Omega$. By Fatou's Lemma and (\ref{eq1c}), we have 
\[
\int_{\Omega} W(u) = \int_{\Omega} \lim_{i\to \infty} W(u_i) \leq \liminf_{i \to \infty} \int_{\Omega} W(u_i) =0.
\]
This shows that $u= \pm1$ a.e. on $\Omega$ and $u$ is a $BV$ function. For simplicity 
we write $\partial^* \{u=1\}$ as the reduced boundary \cite{WP72} of $\{u=1\}$ and $\|\partial^*\{u=1\}\|$
as the boundary measure.

\subsection{The associated varifolds}
We associate to each solution of (\ref{01}) a varifold in a natural way in the following.
We refer to \cite{WA72, LS83} for a comprehensive treatment of varifolds. 

Let ${\bf G}(n,n-1)$ be the Grassmannian, i.e.~the space of unoriented $(n-1)$-dimensional 
subspaces in $\mathbb{R}^n$.
We also regard $S \in {\bf G}(n,n-1)$ as the $n\times n$ matrix representing the orthogonal projection of $\mathbb{R}^n$ onto $S$. For two given square-matrices $S_1$ and $S_2$, we write 
$S_1\cdot S_2:={\rm trace}(S^t_1 \circ S_2)$, where the upper-script $t$ indicates the transpose of the matrix and $\circ$ is the matrix multiplication.
We say that $V$ is an $(n-1)$-dimensional varifold in $\Omega \subset \mathbb{R}^n$ if $V$
 is a Radon measure on ${\bf G}_{n-1}(\Omega):=\Omega \times {\bf G}(n,n-1)$. Let ${\bf V}_{n-1}(\Omega)$ be the set of all
 $(n-1)$-dimensional varifolds in $\Omega$. Convergence in the varifold sense means convergence in the
 usual sense of measures. For $V \in {\bf V}_{n-1}(\Omega)$, we let $\|V\|$ be the weight measure of $V$.
 For $V \in {\bf V}_{n-1}(\Omega)$, we define the first variation of $V$ by
\begin{equation}
\label{re-eq7}
\delta V(g) := \int_{{\bf G}_{n-1}(\Omega)} \nabla g(x) \cdot S \  dV(x,S)
\end{equation}
for any vector field $g \in C^1_c(\Omega;\mathbb{R}^n)$.
We let $\|\delta V\|$ be the total variation of $\delta V$. If $\|\delta V\|$ is 
absolutely continuous with respect to $\|V\|$, then the Radon-Nikodym
derivative $\delta V/\|V\|$ exists as a vector-valued $\|V\|$ measurable 
function. In this case, we define the generalized mean curvature vector of $V$ 
by $-\delta V/\|V\|$ and we use the notation $H_V$.

We associate to each function $u_i$ a varifold $V_i$ as follows.
First, we define a Radon measure $\mu_{i}$ on $\Omega$ by
\begin{equation}
 d \mu_i:= \frac{1}{\sigma}\Big(\frac{\varepsilon_i |\nabla u_i|^2}{2}+\frac{W(u_i)}{\varepsilon_i}\Big)d \mathcal{L}^n,
\label{defmu}
\end{equation}
where $\mathcal L^n$ is the $n$-dimensional Lebesgue measure and $\sigma:=\int_{-1}^1
\sqrt{2W(s)}\,ds$. 
Define $V_i \in {\bf V}_{n-1}(\Omega)$ by
\begin{equation}
V_i (\phi) := \int_{\{ |\nabla u_i| \neq 0 \}} \phi \Big( x,I-\frac{\nabla u_i(x)}{|\nabla u_i(x)|} \otimes
\frac{\nabla u_i(x)}{|\nabla u_i(x)|} \Big) d\mu_i(x)
\label{defvari}
\end{equation}
for $\phi \in C_c ({\bf G}_{n-1}(\Omega))$, where $I$ is the $n \times n$ identity matrix
and $\otimes$ is the tensor product of the two vectors. 
Note that $I-\frac{\nabla u_i(x)}{|\nabla u_i(x)|}\otimes\frac{\nabla u_i(x)}{|\nabla u_i(x)|}$ represents
the orthogonal projection to the $(n-1)$-dimensional subspace $\{ a\in\mathbb R^n\,:\, a\cdot\nabla u_i(x)=0\}$.
By definition, we have
\[
\|V_i\|=\mu_i\res_{\{|\nabla u_i|\neq 0\}}
\]
and by \eqref{re-eq7}, we have
\begin{equation}
\delta V_i (g) = \int_{\{ |\nabla u_i| \neq 0 \}} \nabla g \cdot
\Big( I-\frac{\nabla u_i}{|\nabla u_i|} \otimes
\frac{\nabla u_i}{|\nabla u_i|} \Big) d\mu_i
\label{defvari1}
\end{equation}
for each $g\in C^1_c(\Omega,\mathbb{R}^n)$. 
\subsection{Main Theorems}
With the above assumptions and notation, we show:
\begin{theo}
Suppose that $u_i,v_i,\e_i$ satisfy \eqref{eq1}-\eqref{eq1d} and let $V_i$ be the varifold associated with $u_i$ as in \eqref{defvari}. 
On passing to a subsequence we can assume that
\[
v_i \to v\ \mbox{weakly in} \ W^{1,p}, \ \ u_i \to u \ a.e., \ \ 
V_i \to V \mbox{ in the 
varifold sense.}
\]
Then we have the following properties. 
\begin{enumerate}
\item[(1)] For each $\phi \in C_c(\Omega)$,
\begin{eqnarray*}
\frac12\|V\|(\phi)&=&\lim_{i \to \infty}  \frac{1}{\sigma}\int_{\Omega}\frac{\varepsilon_i}{2}|\nabla u_i|^2 \phi = \lim_{i \to \infty} \frac{1}{\sigma}\int_{\Omega} \frac{W(u_i)}{\varepsilon_i} \phi \\
&=& \lim_{i \to \infty} \frac{1}{\sigma}\int_\Omega |\nabla w_i| \phi.
\end{eqnarray*}
\item[(2)] ${\rm spt}\,\|\partial^* \{u =1\}\| \subset {\rm spt}\,\|V\|$ and $\{u_i\}$ converges locally uniformly to $\pm1$
 on $\Omega\setminus {\rm spt}\,\|V\|$.
\item[(3)] For each $0<b<1$, $\{|u_i|\leq 1-b \}$ locally converges to 
${\rm spt}\,\|V\|$ in the Hausdorff distance sense in $\Omega$.
\item[(4)] $V$ is an integral varifold and the density $\theta(x)$ of $V$ satisfies
\begin{eqnarray*}
\theta(x)= \begin{cases}
{\mbox odd} & \mathcal{H}^{n-1} a.e. \  x\in \partial^*\{u=1\}, \\
{\mbox even} & \mathcal{H}^{n-1} a.e.  \ x\in {\rm spt}\,\|V\| \backslash \partial^*\{u=1\},
\end{cases}
\end{eqnarray*}
\item[(5)] the generalized mean curvature vector $H_V$ of $V$ is given by 
\begin{eqnarray*}
H_V(x)=(T_x\,{\rm spt}\,\|V\|)^{\perp}(v(x)),
\end{eqnarray*}
for $\|V\|$ a.e. $x\in\Omega$. 
\item[(6)] For $\tilde \Omega\subset\subset \Omega$, there exists a constant $\lambda_1$ depending only on $c_0$,$\lambda_0$,$n$,$p$,$W$,
$E_0$ and ${\rm dist}(\tilde \Omega,\,\partial \Omega)$ such that 
\[
\int_{\tilde \Omega}|H_V(x)|^{\frac{p(n-1)}{n-p}}\,d\|V\|(x)\leq 
\int_{\tilde \Omega}|v(x)|^{\frac{p(n-1)}{n-p}} \,d\|V\|(x)\leq \lambda_1.
\]
Note that $\frac{p(n-1)}{n-p}>n-1$ due to \eqref{re-eq3}. 
\end{enumerate}
\label{maintheorem}
\end{theo}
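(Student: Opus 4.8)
The plan is to follow the scheme of \cite{JH00,TY02,TY05}, treating the advection term $\varepsilon_i v_i\cdot\nabla u_i$ perturbatively. The starting point is the first variation identity: integrating \eqref{defvari1} by parts and substituting \eqref{eq1} gives, for every $g\in C^1_c(\Omega;\mathbb{R}^n)$,
\[
\delta V_i(g)=\frac1\sigma\int_\Omega \varepsilon_i\,(v_i\cdot\nabla u_i)(\nabla u_i\cdot g)\,d\mathcal{L}^n-\frac1\sigma\int_\Omega \xi_i\,\di\, g\,d\mathcal{L}^n,
\]
where $\xi_i:=\tfrac{\varepsilon_i|\nabla u_i|^2}{2}-\tfrac{W(u_i)}{\varepsilon_i}$ is the discrepancy. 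The essential difficulty, and the point where the present situation departs from \cite{TY02,TY05}, is that the ``forcing'' $\varepsilon_i v_i\cdot\nabla u_i$ couples the given field $v_i$ with the \emph{unknown} gradient $\nabla u_i$, so no a priori $W^{1,p}$ bound on the forcing is available. Decoupling this is exactly the content of Theorem \ref{theo5}, the uniform local upper density ratio estimate: for $\tilde\Omega\subset\subset\Omega$ there are $D_0,r_0>0$ depending only on $c_0,\lambda_0,n,p,W,E_0$ and $\mathrm{dist}(\tilde\Omega,\partial\Omega)$ with $\mu_i(B_r(x))\le D_0 r^{n-1}$ for all $i$, $x\in\tilde\Omega$, $0<r<r_0$. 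I expect this to be the main obstacle. Its proof should feed a provisional density ratio bound into a Sobolev/trace inequality for $v_i\in W^{1,p}(\Omega)$ (using $p<n$) to estimate $\int_{B_r}\varepsilon_i|v_i|\,|\nabla u_i|^2$ by a power of the density ratio times $\lambda_0$, insert the result into a radial first-variation test to produce an almost-monotone quantity whose additive error is a power of $r$ that is integrable near $0$ precisely because $p>n/2$, and close the argument by a continuity-in-$r$ and covering scheme that improves the provisional bound into the uniform one.

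Granting Theorem \ref{theo5}, I would next test $\delta V_i$ against truncated radial fields $g(y)=(y-x)\chi(|y-x|/r)$ to obtain the monotonicity-type formula, Theorem \ref{lemma6a}: $r\mapsto e^{\Lambda r^{\gamma}}\mu_i(B_r(x))\,r^{-(n-1)}$ is, up to a controlled additive term, nondecreasing, for suitable $\Lambda,\gamma>0$ depending on the data. Comparing this quantity at two scales, together with the hypothesis $W''\ge\kappa$ for $|x|\ge\alpha$, then shows that the discrepancy measures $|\xi_i|\,\mathcal{L}^n$ tend to $0$ locally on $\Omega$; equipartition of energy follows, which is statement (1), and in particular $\tfrac{\varepsilon_i}{\sigma}|\nabla u_i|^2\mathcal{L}^n$ and $\tfrac{W(u_i)}{\sigma\varepsilon_i}\mathcal{L}^n$ both converge weakly-$*$ to $\|V\|$.

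The monotonicity formula also supplies a positive lower density ratio bound on $\mathrm{spt}\,\|V\|$; combined with the upper bound of Theorem \ref{theo5}, Allard's rectifiability theorem gives that $V$ is $(n-1)$-rectifiable. Integrality and the parity statement (4) then follow as in \cite{JH00}: blowing up at $\mathcal{H}^{n-1}$-a.e.\ point of $\mathrm{spt}\,\|V\|$, the one-dimensional optimal profile $\Phi$ forces every blow-up to be a finite union of parallel hyperplanes with integer multiplicity, and tracking how the phase $u$ changes across the sheets shows the multiplicity is odd exactly on $\partial^*\{u=1\}$ and even on the remainder of $\mathrm{spt}\,\|V\|$. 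The inclusion $\mathrm{spt}\,\|\partial^*\{u=1\}\|\subset\mathrm{spt}\,\|V\|$ in (2) comes from the lower semicontinuity bound $\|\partial^*\{u=1\}\|\le\|V\|$ (a consequence of part (1) and $L^1$-lower semicontinuity of total variation applied to $w_i=\Phi(u_i)$); off $\mathrm{spt}\,\|V\|$ one has $\mu_i\to0$ locally, hence $W(u_i)/\varepsilon_i\to0$ in $L^1_{\mathrm{loc}}$, and interior elliptic estimates for \eqref{eq1} upgrade this to local uniform convergence $u_i\to\pm1$, completing (2); combining the uniform convergence with the density lower bound yields the Hausdorff convergence of $\{|u_i|\le1-b\}$ to $\mathrm{spt}\,\|V\|$ in (3).

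Finally, I would pass to the limit in the first variation identity. The discrepancy term vanishes by the second step. For the advection term, write $\tfrac{\varepsilon_i}{\sigma}\nabla u_i\otimes\nabla u_i\,\mathcal{L}^n=(\nu_i\otimes\nu_i)\,\mu_i+\tfrac{\xi_i}{\sigma}\,(\nu_i\otimes\nu_i)\,\mathcal{L}^n$ with $\nu_i:=\nabla u_i/|\nabla u_i|$; since $\xi_i\to0$ and $V_i\to V$ in the varifold sense, the right-hand side converges weakly-$*$ to $S^\perp\,d\|V\|$ (here using rectifiability of $V$, so that its Grassmannian part is $\|V\|$-a.e.\ a single plane). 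Since $v_i\to v$ strongly in $L^q_{\mathrm{loc}}$ for every $q<np/(n-p)$, while the uniform trace bound $\sup_i\int_{\tilde\Omega}|v_i|^{p(n-1)/(n-p)}\,d\mu_i<\infty$ obtained along the way provides enough equi-integrability to pass the product to the limit, one concludes $\delta V(g)=\int S^\perp(v)\cdot g\,d\|V\|$; thus $\|\delta V\|$ is absolutely continuous with respect to $\|V\|$ and $H_V=S^\perp(v)$, which is (5). For (6), $|H_V|=|S^\perp(v)|\le|v|$ pointwise $\|V\|$-a.e., and the Adams-type trace inequality for $W^{1,p}$ functions against a Radon measure satisfying $\|V\|(B_r(x))\le D_0 r^{n-1}$ on $\tilde\Omega$ gives $\int_{\tilde\Omega}|v|^{p(n-1)/(n-p)}\,d\|V\|\le C\,\|v\|_{W^{1,p}(\Omega)}^{p(n-1)/(n-p)}\le\lambda_1$ with $\lambda_1$ of the asserted dependence; the exponent $p(n-1)/(n-p)$ exceeds $n-1$ because $p>n/2$.
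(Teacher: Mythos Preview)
Your outline is correct and follows essentially the same route as the paper. Two small points of order are worth flagging. First, the sign in your opening first-variation identity is off: from \eqref{eq2} one gets
\[
\delta V_i(g)=-\frac{1}{\sigma}\int_\Omega \varepsilon_i(v_i\cdot\nabla u_i)(\nabla u_i\cdot g)\,d\mathcal L^n+\text{(discrepancy terms)},
\]
which is what yields $\delta V(g)=-\int S^\perp(v)\cdot g\,dV$ and hence $H_V=S^\perp(v)$; your stated identity would give the wrong sign for $H_V$. Second, Allard's rectifiability theorem \cite[5.5(1)]{WA72} requires locally bounded first variation in addition to positive lower density, so the paper establishes the limiting first-variation formula \eqref{delmean} for the (a priori general) varifold $V$ \emph{before} invoking rectifiability, rather than after as in your ordering; your appeal to rectifiability when passing $\nu_i\otimes\nu_i\,d\mu_i$ to the limit is unnecessary, since the varifold convergence $V_i\to V$ already gives the limit as an integral over $dV(x,S)$. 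Finally, for Theorem~\ref{theo5} the paper's mechanism is not a covering/continuity scheme but a weighted-supremum absorption: set $E_1:=\sup\,\mathrm{dist}(x,\partial U_{1-\varepsilon})^{n-1}E(r,x)$, pick a near-maximizer, rescale so the boundary distance becomes $1$, and combine the discrepancy estimate (Theorem~\ref{disc}) with the Meyers--Ziemer trace inequality (Theorem~\ref{MZ}) inside the monotonicity identity to obtain $E_1\le C+\tfrac14 E_1$.
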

Since $V$ is integral and the generalized mean curvature vector is in the stated 
class, $V$ satisfies various good properties described in \cite[Section 17]{LS83}. 
In particular, ${\rm spt}\,\|V\|$ is a closed
countably $(n-1)$-rectifiable set (see \cite[17.9(1)]{LS83}), and writing $\Gamma:= {\rm spt}\,\|V\|$, for any 
$\phi\in C_c({\bf G}_{n-1}(\Omega))$, 
$$\int_{{\bf G}_{n-1}(\Omega)} \phi(x,S)\,dV(x,S)=\int_{\Gamma} \phi(x,T_x\,\Gamma)\theta(x)\,d\mathcal H^{n-1}(x).$$
Here, $T_x\,\Gamma\in {\bf G}(n,n-1)$ is the approximate tangent space of $\Gamma$ at 
$x$ which exists $\mathcal H^{n-1}$ a.e.~$x\in \Gamma$. With this notation, (5) implies
that $H_V(x)=(T_x\,\Gamma)^{\perp}(v(x))$ for $\mathcal H^{n-1}$ a.e.~$x\in \Gamma$, i.e.,
the generalized mean curvature vector of $V$ coincides with the projection of $v$
to the orthogonal subspace $(T_x\,\Gamma)^{\perp}$ for 
$\mathcal H^{n-1}$ a.e.~$x\in \Gamma$. We emphasize the difference of characterization of
the mean curvature vector from \cite{TY05}, where the similar equality holds only on 
the reduced boundary of $\{u=1\}$, while the equality here holds on the whole support of $\|V\|$ including
on the ``hidden boundary'' $\Gamma\setminus\partial^*\{u=1\}$.
If we additionally assume that $\theta=1$ for 
$\mathcal H^{n-1}$ a.e.~$x\in \Gamma$, then because of the integrability of $H_V$ and 
the Allard regularity theorem \cite{WA72}, 
except for a closed $\mathcal H^{n-1}$-null set, $\Gamma$ is locally
a $C^{1,2-\frac{n}{p}}$ hypersurface. Without the assumption $\theta=1$, we can 
still conclude that ${\rm spt}\,\|V\|$ is $C^{1,2-\frac{n}{p}}$ hypersurface
on a dense open set of ${\rm spt}\,\|V\|$, even though we do not know if the 
complement is $\mathcal H^{n-1}$-null or not. 
\subsection{A vectorial prescribed mean curvature problem}
As an application\footnote{The authors thank Nick Edelen for a discussion which inspired this application.}
of Theorem \ref{maintheorem} with suitable modifications, we prove the following:
\begin{theo}
Let $(M,g)$ be a smooth compact $n$-dimensional Riemannian manifold and let 
$\rho \in W^{2,p}(M)$ be a given function, where $p>\frac{n}{2}$. Then, there exists
a non-zero integral varifold $V$ in $M$ such that 
$$H_V(x)=(T_x \,{\rm spt}\,\|V\|)^{\perp}(\nabla \rho(x))$$ for $\|V\|$ a.e.~$x\in M$.
\label{vmcp}
\end{theo}
\begin{proof} We may assume $p<n$. 
Consider the following functional for $\e>0$ and $u\in W^{1,2}(M)$:
\[
F_{\e}(u):=\int_{M}\Big(\frac{\e|\nabla u|^2}{2}+\frac{W(u)}{\e}\Big)\exp(\rho)\,d{\omega}_g.
\]
By the Sobolev embedding, $\rho\in C^{0,2-\frac{n}{p}}(M)$ and thus 
$0<\exp(\min\,\rho)\leq \exp(\rho)\leq \exp(\max\,\rho)<\infty$. By considering the
path space in $W^{1,2}(M)$ connecting $u\equiv 1$ and $u\equiv -1$, the standard min-max method gives a non-trivial
critical point $u_\e$ for each $\e>0$, with uniform strictly positive lower and upper bounds of $F_{\e}(u_{\e})$. The 
critical point satisfies \eqref{eq1} with $v=\nabla\rho$ and $|u_\e|\leq 1$, with an appropriate modification of
the equation. 
Take a sequence $\e_i\to 0+$ and a corresponding min-max
critical points $u_i$. Then the sequence $u_i,\nabla\rho, \e_i$ satisfy all the
assumptions of Theorem \ref{maintheorem} with a small error terms coming from the metric of $M$
(see Guaraco's work \cite{Guaraco} for an adaptation of the results to Riemannian manifolds in the
case $v_\varepsilon=0$). 
The limit varifold $V$
thus has the desired property. 
\end{proof}
For more remarks on the main results, see Section 5. 
\section{The estimate for the upper density ratio}
In this section, we prove Theorem \ref{theo5}-\ref{lemm6}, which give $\e$-independent estimates of the upper and lower
density ratios of the energy. Throughout this 
section, we drop the index $i$ and set $\Omega=U_1=\{|x|<1\}$ since the result is local. Assume $u\in W^{1,2}(U_1)$ and $v\in W^{1,p}(U_1;\R^n)$ satisfy \eqref{eq1} with a positive
$\varepsilon$ and \eqref{eq1b}-\eqref{eq1d}
are satisfied for a given set of $c_{\ref{c0}}, E_0, \lambda_0$. The exponent $p$ satisfies \eqref{re-eq3}. 
%
We first derive two preliminary properties for $u$, Lemma \ref{prereg} and \ref{lemm8}. 
\begin{lemm}
\label{prereg}
There exists $c_{\cc[\label{c1}]}>0$ depending only on $c_{\ref{c0}},\lambda_0,n,p$ and $W$ such that
\begin{equation}
\label{u12a}
\sup_{x\in U_{1-\e}}\e |\nabla u(x)|\leq c_{\ref{c1}}
\end{equation}
and
\begin{equation}
\label{hu12a}
\sup_{x,x'\in U_{1-\e}} \e^{3-\frac{n}{p}}\frac{|\nabla u(x)-\nabla u(x')|}{|x-x'|^{2-\frac{n}{p}}}\leq 
c_{\ref{c1}}
\end{equation}
for $0<\e<1/2$. If $\e\geq 1/2$, then we have for any $0<s<1$
\begin{equation}
\label{u12b}
\sup_{x\in U_{s}} |\nabla u(x)|\leq c_{\ref{c1}}
\end{equation}
where $c_{\ref{c1}}$ depends additionally on $s$. 
In both cases, we have $u\in W_{loc}^{3,p}(U_1)$.
\end{lemm}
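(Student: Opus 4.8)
\textbf{Proof plan for Lemma \ref{prereg}.}
The strategy is a standard bootstrap built on elliptic $L^p$-estimates applied to the rescaled equation. First I would recast \eqref{eq1} in the form $\Delta u = \e^{-2}W'(u) - v\cdot\nabla u$ and observe that, by \eqref{eq1b}, the term $\e^{-2}W'(u)$ is pointwise bounded by $C\e^{-2}$ since $W$ is $C^3$ and $|u|\le c_{\ref{c0}}$. The scale-invariant way to see the estimates \eqref{u12a}--\eqref{hu12a} is to introduce, for fixed $y\in U_{1-\e}$, the rescaled function $\tilde u(z):=u(y+\e z)$ on $z\in U_1$; it satisfies $\Delta \tilde u = W'(\tilde u) - \e^2\,\tilde v\cdot\nabla\tilde u$ with $\tilde v(z):=v(y+\e z)$. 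The energy bound \eqref{eq1c} rescales to $\int_{U_1}|\nabla\tilde u|^2\,dz\le C\e^{1-n}\cdot\e^{n}\cdot\e^{-1}=C$, i.e. $\tilde u\in W^{1,2}(U_1)$ with a uniform bound, and \eqref{eq1d} together with $p>n/2$ controls $\tilde v$ in $L^{np/(n-p)}(U_1)$ after rescaling (the $L^{np/(n-p)}$ norm scales like $\e^{(n-p)/p - n\cdot(n-p)/(np)} = \e^{-1}$ in the wrong direction, so here one must be careful — see below).

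The core of the argument is then: (i) $W^{1,2}\cap L^\infty$ control of $\tilde u$ plus the $L^p$-bound on the right-hand side of the rescaled equation gives, via Calderón–Zygmund and Sobolev embedding, $\tilde u\in W^{2,q}_{loc}$ for $q=np/(n-p)$ on a slightly smaller ball, hence $\nabla\tilde u\in C^{0,2-n/p}_{loc}$ by Morrey; (ii) unwinding the scaling turns $|\nabla\tilde u(0)|\le c_{\ref{c1}}$ into \eqref{u12a} and the $C^{0,2-n/p}$-seminorm bound into \eqref{hu12a} with the indicated powers of $\e$; (iii) differentiating the equation once and applying elliptic estimates again (here $W\in C^3$ is used so that $W''(\tilde u)\nabla\tilde u$ is controlled) yields $\tilde u\in W^{3,np/(2(n-p))}_{loc}$, which rescales to the claimed local regularity $u\in W^{3,np/(2(n-p))}_{loc}\cap W^{2,np/(n-p)}_{loc}\cap C^{1,2-n/p}_{loc}(U_1)$. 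For the case $\e\ge 1/2$ no rescaling is needed: \eqref{eq1} is already a uniformly elliptic equation with bounded coefficients on $U_1$ and \eqref{u12b} follows directly from interior elliptic regularity on $U_s\subset\subset U_1$, with the constant now depending on $s$.

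\textbf{Main obstacle.} The delicate point is genuinely controlling the advection term $\e\,v\cdot\nabla u$ (equivalently $\e^2\,\tilde v\cdot\nabla\tilde u$ after rescaling) \emph{uniformly in} $\e$, because the $L^{np/(n-p)}$-norm of $v$ does not rescale favorably — one factor of $\e$ from the volume change in $\int|\nabla\tilde u|^2$ is what makes the $W^{1,2}$-bound work, but the coefficient $\tilde v$ picks up the opposite scaling. The resolution is that the advection term carries the extra factor $\e^2$ (not $\e$): schematically, $\|\e^2\tilde v\cdot\nabla\tilde u\|_{L^{p}(U_1)}\le \e^2\|\tilde v\|_{L^{np/(n-p)}(U_1)}\|\nabla\tilde u\|_{L^{n}(U_1)}$ by Hölder, and $\|\tilde v\|_{L^{np/(n-p)}(U_1)}=\e^{-(n-p)/p}\|v\|_{L^{np/(n-p)}(U_{\e}(y))}\le \e^{-(n-p)/p}\lambda_0$, while $\|\nabla\tilde u\|_{L^n(U_1)}$ is controlled by interpolation between the uniform $W^{1,2}$-bound and the $W^{2,q}$-bound being bootstrapped; since $n/2<p<n$ one checks $2-(n-p)/p = (3p-n)/p>1>0$, so the net power of $\e$ is positive and the advection term is a genuine perturbation. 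One therefore runs the bootstrap as a fixed-point / absorption argument: assume a provisional bound on $\|\nabla\tilde u\|_{L^q(U_{3/4})}$, feed it through Calderón–Zygmund, and use the positive power of $\e$ (and smallness of the domain, if needed) to absorb the advection contribution back into the left-hand side, closing the estimate. Once this is in place the remaining steps are routine elliptic regularity and careful bookkeeping of the scaling exponents.
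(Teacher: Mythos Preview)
Your overall strategy---rescale and bootstrap via Calder\'on--Zygmund and Sobolev---is exactly the paper's approach, but there is a genuine gap at the starting point. The claim that the energy bound \eqref{eq1c} yields a uniform $W^{1,2}$ bound on $\tilde u$ over $U_1$ is false. With $\tilde u(z)=u(y+\e z)$ one has
\[
\int_{U_1}|\nabla\tilde u|^2\,dz=\e^{2-n}\int_{U_\e(y)}|\nabla u|^2\,dx,
\]
and \eqref{eq1c} only gives $\int_{U_\e(y)}|\nabla u|^2\le 2E_0/\e$, so the right-hand side is at most $2E_0\,\e^{1-n}$, which blows up for $n\ge 2$. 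Your chain ``$C\e^{1-n}\cdot\e^n\cdot\e^{-1}=C$'' tacitly assumes a local density-ratio bound of the form $\int_{U_\e(y)}\e|\nabla u|^2\le C\e^{n-1}$, but that is precisely the content of Theorem~\ref{theo5}, which is proved \emph{later} using Lemma~\ref{prereg} as input---so invoking it here is circular. A further sign that \eqref{eq1c} is the wrong tool: the statement asserts that $c_{\ref{c1}}$ depends only on $c_{\ref{c0}},\lambda_0,n,p,W$ and \emph{not} on $E_0$.

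The correct way to launch the bootstrap (and what the paper does) is a Caccioppoli-type estimate: test the rescaled equation with $\tilde u\phi^2$ for a cutoff $\phi$ on a unit ball, and use only $\|\tilde u\|_{L^\infty}\le c_{\ref{c0}}$ together with the bound on the rescaled advection coefficient to obtain $\int_{B_1(x)}|\nabla\tilde u|^2\le c(c_{\ref{c0}},\lambda_0,n,p,W)$ with no $E_0$-dependence. After that your bootstrap proceeds essentially as written. One minor slip: the rescaled advection term carries a factor $\e$, not $\e^2$ (you dropped the $\e^{-1}$ coming from $\nabla_x u=\e^{-1}\nabla_z\tilde u$), so the net power of $\e$ in your perturbation estimate is $1-(n-p)/p=2-\tfrac{n}{p}$ rather than $(3p-n)/p$; since $p>n/2$ this is still positive and suffices for absorption, so it does not break the rest of the scheme.
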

\begin{proof}
Consider the case $0<\e<1/2$. Define $\tilde u(x):=u(\e x)$ and $\tilde{v}(x):=\e v(\varepsilon x)$ for 
$x\in U_{\e^{-1}}$. After this change of variables, we obtain from \eqref{eq1} that
\begin{equation}
\label{re-eq2}
-\Delta \tilde{u}+W'(\tilde{u})= \tilde{v} \cdot \nabla \tilde{u} \ \ \ \mbox{weakly on }  U_{\e^{-1}}.
\end{equation}
Under the change of variables, we obtain from \eqref{eq1d}
\begin{equation}
\|\tilde v\|_{L^{\frac{np}{n-p}}(U_{\e^{-1}})}
+
\|\nabla \tilde v\|_{L^{p}(U_{\e^{-1}})}
\leq \lambda_0 \varepsilon^{2-\frac{n}{p}}.
\label{re-eq1}
\end{equation}
For any $U_2(x)\subset U_{\e^{-1}}$, let $\phi\in C^1_c(U_2(x))$ 
be a function such that $0\leq \phi\leq 1$, $\phi=1$ on $B_1(x)$ and $|\nabla\phi|\leq 4$ on $U_2(x)$. Use \eqref{re-eq2} with the test function $\tilde u\phi^2$. Using also \eqref{eq1b},
we obtain
\begin{equation}
\label{re-eq4}
\begin{split}
\int|\nabla\tilde u|^2\phi^2&\leq c_{\ref{c0}}\int (2\phi|\nabla\phi\|\nabla\tilde u|
+|W'|\phi^2+|\tilde v\|\nabla\tilde u|\phi^2)\\ 
&\leq \frac12\int|\nabla\tilde u|^2\phi^2+\int (4c_{\ref{c0}}^2|\nabla\phi|^2+c_{\ref{c0}}|W'|\phi^2
+c_{\ref{c0}}^2|\tilde v|^2\phi^2).
\end{split}
\end{equation} 
Since $\frac{np}{n-p}>2$, \eqref{re-eq1} and \eqref{re-eq4} give 
\begin{equation}
\label{re-eq5}
\sup_{B_2(x)\subset U_{\e^{-1}}}\int_{B_1(x)}|\nabla\tilde u|^2\leq c(c_{\ref{c0}},\lambda_0,n,p,W).
\end{equation}
We next note that the function $\tilde u\phi$ weakly satisfies the following equation:
\begin{equation}
\label{re-eq6}
-\Delta(\tilde u\phi)=-\tilde u\Delta\phi-2\nabla\phi\cdot\nabla\tilde u+(
\tilde v\cdot\nabla\tilde u-W'(\tilde u))\phi.
\end{equation} 
Using the standard $L^p$ theory \cite[Theorem 9.11]{NH77} to \eqref{re-eq6}, 
we may start a bootstrapping argument as follows. Staring with $q=2$, we have
\begin{equation*}
\begin{split}
&\nabla \tilde u\in L^q_{loc}\,\Longrightarrow\, \tilde v\cdot\nabla\tilde u
\in 
L_{loc}^{\frac{npq}{np+q(n-p)}}\,\Longrightarrow\, \tilde u\in W^{2,\frac{npq}{np+q(n-p)}}_{loc}\\
& \Longrightarrow\,\nabla \tilde u\in L_{loc}^{\frac{npq}{np-q(2p-n)}}
\end{split}
\end{equation*}
with the corresponding estimates relating these norms. Note that the exponent of integrability of $\nabla\tilde u$ is raised 
from $q$ to $q\cdot \frac{np}{np-q(2p-n)}$, with the factor strictly larger than one. Thus, in 
a finite number of bootstrapping, we obtain the $W^{2,s}_{loc}$ (with $s>n$) estimate for $\tilde u$, and by the Sobolev inequality, the $L^{\infty}_{loc}$ estimate for $\nabla\tilde u$. 
Again by the $L^p$ theory, we obtain the $W^{2,\frac{np}{n-p}}_{loc}$ estimate of $\tilde u$. 
In particular, by the Sobolev inequality, we obtain \eqref{u12a} and \eqref{hu12a}. 
Since the right-hand side of \eqref{re-eq6} is in $W^{1,p}_{loc}$ (note that
$\tilde v\cdot\nabla^2\tilde u\in L^{\frac{np}{2(n-p)}}_{loc}$ and 
$\frac{np}{2(n-p)}>p$ by \eqref{re-eq3}), we have $\tilde u\in W^{3,p}_{loc}$ and
the weak third-derivatives of $\tilde u$ exist. 
The case of $\e\geq 1/2$ does not require the change of variables as above and the proof is omitted. 
\end{proof}

\begin{lemm} 
Given $0<s<1$, there exist constants $0<\e_{\ce[\label{e1}]},\eta<1$ depending only on $c_0,\lambda_0,W,n,p$ and $s$ such that
\begin{eqnarray}
\sup_{x\in B_s}|u(x)| \leq 1+ \varepsilon^{\eta}
\end{eqnarray}
for $\e \leq \e_{\ref{e1}}$. \label{lemm8}
\end{lemm}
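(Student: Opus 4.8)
The plan is to reduce, by the rescaling $\tilde u(x):=u(\e x)$, $\tilde v(x):=\e v(\e x)$ on $U_{\e^{-1}}$ used in the proof of Lemma \ref{prereg}, to a uniform \emph{exponential} decay estimate for $(\tilde u-1)_+$ on $B_{s/\e}$, and then translate back. The essential gain is that, by \eqref{re-eq1}, the rescaled drift is small: with $q:=\frac{np}{n-p}>n$ we have
\[
\|\tilde v\|_{L^{q}(U_{\e^{-1}})}\leq \lambda_0\,\e^{\,2-\frac{n}{p}}=:\tilde\lambda\longrightarrow 0\quad(\e\to0),
\]
since $2-\frac{n}{p}>0$ by \eqref{re-eq3}. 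It suffices to treat $\e<\e_{\ref{e1}}$ with $\e_{\ref{e1}}<\frac{1}{2}$, so that \eqref{re-eq2} holds. If $c_{\ref{c0}}\leq 1$ there is nothing to prove; otherwise put $M_0:=c_{\ref{c0}}-1>0$, so that $\zeta:=(\tilde u-1)_+$ obeys $0\leq\zeta\leq M_0$ by \eqref{eq1b}. By symmetry it suffices to bound $\zeta$ on $B_{s/\e}$; the function $(-1-\tilde u)_+$ is handled identically, using that $W'(t)\leq\kappa(t+1)$ for $t\leq-1$ (from (a) and (c)) in place of $W'(t)\geq\kappa(t-1)$ for $t\geq1$.

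First I would check that $\zeta$ is a weak subsolution on $U_{\e^{-1}}$ of the operator $Lw:=\Delta w+\tilde v\cdot\nabla w-\kappa w$. By Lemma \ref{prereg}, $\tilde u$ is regular enough that \eqref{re-eq2} holds a.e.\ and $\Delta\tilde u\in L^{q}_{loc}$; since $t\mapsto(t-1)_+$ is convex and Lipschitz, $\Delta\zeta\geq\chi_{\{\tilde u>1\}}\Delta\tilde u$ distributionally, while on $\{\tilde u>1\}$ the equation \eqref{re-eq2} and $W'(\tilde u)\geq\kappa(\tilde u-1)=\kappa\zeta$ give $\Delta\tilde u=W'(\tilde u)-\tilde v\cdot\nabla\tilde u\geq\kappa\zeta-\tilde v\cdot\nabla\zeta$, and $\chi_{\{\tilde u>1\}}\,\tilde v\cdot\nabla\tilde u=\tilde v\cdot\nabla\zeta$ a.e.; together these give $L\zeta\geq0$.

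The core step is an energy inequality with a geometric gain. Testing $L\zeta\geq0$ against $\zeta\varphi^2$, with $\varphi\in C^1_c(U_{\e^{-1}})$, $0\leq\varphi\leq1$, and applying Young's inequality yields
\[
\tfrac{1}{2}\int\varphi^2|\nabla\zeta|^2+\kappa\int\zeta^2\varphi^2\;\leq\;4\int\zeta^2|\nabla\varphi|^2+\int|\tilde v|^2\zeta^2\varphi^2 .
\]
For the last term I would use Hölder's inequality with exponents $\frac{q}{2},\frac{q}{q-2}$, then the Gagliardo--Nirenberg--Sobolev inequality (legitimate because $q>n$), then Young's inequality, to dominate it by $C_{*}\tilde\lambda^2\big(\int\varphi^2|\nabla\zeta|^2+\int\zeta^2|\nabla\varphi|^2+\int\zeta^2\varphi^2\big)$ with $C_{*}=C_{*}(n,p)$. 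Since $\tilde\lambda\to0$, once $\e$ is small this term is absorbed into the left side, leaving $\int\zeta^2\varphi^2\leq C\kappa^{-1}\int\zeta^2|\nabla\varphi|^2$ with $C$ absolute. Choosing $\varphi\equiv1$ on $B_\rho(y)$, $\spt\varphi\subset B_{\rho+1/2}(y)\subset U_{\e^{-1}}$, $|\nabla\varphi|\leq4$, and setting $F_y(r):=\int_{B_r(y)}\zeta^2$, this reads $F_y(\rho)\leq A\big(F_y(\rho+\tfrac{1}{2})-F_y(\rho)\big)$ with $A=A(W)>0$, hence
\[
F_y(\rho)\;\leq\;\theta\,F_y\big(\rho+\tfrac{1}{2}\big),\qquad \theta:=\frac{A}{1+A}\in(0,1),
\]
whenever $B_{\rho+1/2}(y)\subset U_{\e^{-1}}$, with $\theta$ depending only on $W$.

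Finally, for $x\in B_{s/\e}$, iterating this $N:=\lfloor 2(1-s)/\e\rfloor-2$ times (so that $B_{1+N/2}(x)\subset U_{\e^{-1}}$) gives $F_x(1)\leq\theta^{N}F_x\big(1+\tfrac{N}{2}\big)\leq\theta^{N}\int_{U_{\e^{-1}}}\zeta^2\leq\theta^{N}M_0^2\,\mathcal{L}^n(U_{\e^{-1}})$. Since $\theta^{N}\leq\theta^{-3}\exp\!\big(-2(1-s)|\log\theta|/\e\big)$ and $\mathcal{L}^n(U_{\e^{-1}})$ equals a fixed constant times $\e^{-n}$, the right side is $\leq e^{-c/\e}$ for $\e$ small, where $c:=(1-s)|\log\theta|>0$. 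The De Giorgi--Nash--Moser local maximum principle for subsolutions (the drift $\tilde v$ lying in $L^q$ with norm $\leq\lambda_0$ and $q>n$; see e.g.\ \cite[Theorem 8.17]{NH77}) then gives $\sup_{B_{1/2}(x)}\zeta\leq C\big(\int_{B_1(x)}\zeta^2\big)^{1/2}\leq Ce^{-c/(2\e)}$ with $C$ depending only on $n,p,W,\lambda_0$. Since this holds for every $x\in B_{s/\e}$ and $\zeta(\tilde x)=(u(\e\tilde x)-1)_+$, undoing the rescaling yields $\sup_{B_s}(u-1)_+\leq Ce^{-c/(2\e)}$, and symmetrically $\sup_{B_s}(-1-u)_+\leq Ce^{-c/(2\e)}$. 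Hence $\sup_{B_s}|u|\leq 1+Ce^{-c/(2\e)}\leq 1+\e^{\eta}$ for any fixed $\eta\in(0,1)$, once $\e\leq\e_{\ref{e1}}$ with $\e_{\ref{e1}}$ depending only on $c_{\ref{c0}},\lambda_0,W,n,p,s$. The main obstacle is the absorption step: one must exploit the decay $\tilde\lambda=\lambda_0\e^{2-n/p}\to0$ — exactly what $p>n/2$ provides — to control the merely $L^q$-integrable drift, and to ensure that the resulting gain $\theta$, hence the rate $c$, does not depend on $\e$.
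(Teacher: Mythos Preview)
Your proof is correct but follows a genuinely different route from the paper's. The paper works in the original variables: it tests \eqref{eq1} against $[(u-1)_+]^{q}\phi^2$ with $q=\frac{np}{n-p}-1$ (the exponent chosen to match the integrability of $v$), uses the gradient bound \eqref{u12a} to replace $\e|\nabla u|$ by $c_{\ref{c1}}$ in the drift term, and after one iteration obtains $\int_{B_s}[(u-1)_+]^{q+1}\leq c\,\e^{q+1}$. A short contradiction argument, again via \eqref{u12a}, then converts this integral bound into the pointwise bound $\sup(u-1)_+\leq\e^{\eta}$.

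Your approach instead rescales, recognizes $\zeta=(\tilde u-1)_+$ as a subsolution of the linear operator $\Delta+\tilde v\cdot\nabla-\kappa$, and combines a Caccioppoli inequality (with the drift absorbed via Sobolev and the smallness $\tilde\lambda\to0$) with Widman hole-filling and iteration across the $O(\e^{-1})$-many unit annuli separating $B_{s/\e}$ from $\partial U_{\e^{-1}}$; a final De Giorgi--Nash--Moser step upgrades $L^2$ to $L^\infty$. This yields the stronger conclusion $\sup_{B_s}(u-1)_+\leq Ce^{-c/\e}$, and does so without invoking the quantitative gradient bound \eqref{u12a} (you use Lemma~\ref{prereg} only for qualitative regularity). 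The paper's argument is shorter and more elementary, relying on the already-available pointwise gradient control; yours is more robust and gives an exponential rather than polynomial rate, at the cost of the DGNM machinery and the hole-filling iteration.
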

\begin{proof} 
Let $q = \frac{np}{n-p}-1$ and $\phi \in C^\infty_c(B_{\frac{s+1}{2}})$ with $\phi \geq0$. Multiplying (\ref{eq1})
 by $[(u-1)_+]^q \phi^2$, we have
\begin{eqnarray}
-\varepsilon \int q [(u-1)_+]^{q-1} |\nabla u|^2 \phi^2 +2 [(u-1)_+]^q \phi \nabla \phi \cdot \nabla u \nonumber \\
= \int \frac{W'}{\varepsilon}[(u-1)_+]^q \phi^2 - \int \varepsilon \nabla u\cdot v [(u-1)_+]^q \phi^2.
\end{eqnarray}
By $W'(u) \geq \kappa(u-1)$ for $u\geq 1$ and \eqref{u12a}, we obtain
\begin{eqnarray}
&&\frac{\kappa}{\varepsilon} \int [(u-1)_+]^{q+1} \phi^2 + \int \varepsilon q [(u-1)_+]^{q-1} |\nabla u|^2 \phi^2 \nonumber \\
&\leq& 2\varepsilon \int [(u-1)_+]^q \phi |\nabla \phi| |\nabla u| + c_{\ref{c1}} \int |v| [(u-1)_+]^q \phi^2 \nonumber \\
&\leq& \frac{q \varepsilon}{2} \int [(u-1)_+]^{q-1} |\nabla u|^2 \phi^2 + \frac{8\varepsilon}{q} \int
 [(u-1)_+]^{q+1} |\nabla \phi|^2 \nonumber \\ && + \frac{\kappa}{2\varepsilon} \int [(u-1)_+]^{q+1} \phi^2
+ \frac{\varepsilon^q c(q, c_{\ref{c1}})}{\kappa^q} \int |v|^{q+1} \phi^2,
\end{eqnarray}
which shows
\begin{eqnarray}
\frac{\kappa}{2\varepsilon} \int [(u-1)_+]^{q+1} \phi^2 \leq \frac{8\varepsilon}{q} \int
 [(u-1)_+]^{q+1} |\nabla \phi|^2 + \frac{\varepsilon^q c(q,c_{\ref{c1}})}{\kappa^q} \int |v|^{q+1} \phi^2. \nonumber \\
\end{eqnarray}
By \eqref{eq1b}, \eqref{eq1d} and iterating the computation above with suitable $\phi$, we obtain
\begin{eqnarray}
\int_{B_{s}} [(u-1)_+]^{q+1} \leq c_{\cc\label{c9}}(s,q,\lambda_0,n,p,W ,c_{\ref{c0}},c_{\ref{c1}}) \varepsilon^{q+1}.
\end{eqnarray}
To derive a contradiction, assume that $u(x_0)-1 \geq \varepsilon^\eta$ for some
 $x_0 \in B_{s}$. By \eqref{u12a}, for $y \in B_{\frac{\varepsilon^{1+\eta}}{2c_{\ref{c1}}}}(x_0)$,
\begin{eqnarray}
u(y)-1 \geq u(x_0)-1-\sup |\nabla u|\frac{\varepsilon^{1+\eta}}{2c_{\ref{c1}}} \geq \frac{\varepsilon^\eta}{2}.
\end{eqnarray}
Then we have 
\begin{eqnarray}
c_{\ref{c9}} \varepsilon^{q+1} \geq \int_{B_{\frac{\varepsilon^{1+\eta}}{2c_{\ref{c1}}}}(x_0)} [(u-1)_+]^{q+1}
\geq \left(\frac{\varepsilon^\eta}{2}\right)^{q+1} \omega_n \left(\frac{\varepsilon^{1+\eta}}{2c_{\ref{c1}}}\right)^n,
\end{eqnarray}
which show by $q = \frac{np}{n-p}-1$
\begin{eqnarray}
\varepsilon^{\eta \frac{np}{n-p} - \frac{np}{n-p} + n+n\eta} \leq c_{\cc}(s,q,\lambda_0,n,p,W,c_{\ref{c0}},c_{\ref{c1}}).
\end{eqnarray}
This is a contradiction if $\eta$ and $\e$ are sufficiently small. $u \geq -1-\varepsilon^\eta$ is proved similarly.
\end{proof}

The next Lemma \ref{lemm1aa} is the starting point of the ultimate establishment of
the monotonicity formula. 
\begin{lemm}
For $B_r(x) \subset U_1$, we have
\begin{equation}
\begin{split}
&\frac{d}{dr}\left\{ \frac{1}{r^{n-1}} \int_{B_r(x)} \left(\frac{\varepsilon |\nabla u|^2}{2}+\frac{W(u)}{\varepsilon}\right) \right\}
= \frac{1}{r^n} \int_{B_r(x)}  \left(\frac{W(u)}{\varepsilon}-\frac{\varepsilon |\nabla u|^2}{2} \right)  \\
& +\frac{\varepsilon}{r^{n+1}} \int_{\partial B_r(x)}\left((y-x) \cdot \nabla u \right)^2 + 
 \frac{\varepsilon}{r^n} \int_{B_r(x)} (v \cdot \nabla u)((y-x) \cdot \nabla u). 
\end{split}
\label{eq2a}
\end{equation}
\label{lemm1aa}
\end{lemm}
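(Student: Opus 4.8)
The plan is to derive \eqref{eq2a} by differentiating the rescaled energy and using the stationarity-type identity coming from the PDE \eqref{eq1}. First I would set, for $B_r(x)\subset U_1$,
\[
I(r):=\int_{B_r(x)}\Big(\frac{\e|\nabla u|^2}{2}+\frac{W(u)}{\e}\Big)\,d\mathcal L^n,
\]
so that $\frac{d}{dr}\{r^{1-n}I(r)\}=r^{1-n}I'(r)-(n-1)r^{-n}I(r)$, where $I'(r)=\int_{\partial B_r(x)}\big(\frac{\e|\nabla u|^2}{2}+\frac{W(u)}{\e}\big)\,d\mathcal H^{n-1}$ by the coarea formula. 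The remaining task is to produce an alternative expression for the boundary integral $I'(r)$ involving the radial derivative $(y-x)\cdot\nabla u$ and the volume terms appearing in \eqref{eq2a}.

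The key step is to test \eqref{eq1} against the radial vector field. Concretely, I would multiply \eqref{eq1} (written as $\e\Delta u=W'(u)/\e+\e v\cdot\nabla u$, with the sign as in the paper) by $(y-x)\cdot\nabla u$ and integrate over $B_r(x)$. Integrating the Laplacian term by parts on $B_r(x)$ — using that $u\in W^{2,np/(n-p)}_{loc}$ by Lemma \ref{prereg}, so the manipulations are justified for a.e.\ $r$ — produces the standard inner-variation identity: the bulk term $\int_{B_r(x)}\e\Delta u\,((y-x)\cdot\nabla u)$ becomes, after integration by parts,
\[
\e\int_{\partial B_r(x)}\frac{((y-x)\cdot\nabla u)^2}{r}\,d\mathcal H^{n-1}-\frac{(n-2)\e}{2}\int_{B_r(x)}|\nabla u|^2-\frac{\e r}{2}\int_{\partial B_r(x)}|\nabla u|^2.
\]
On the other side, $\int_{B_r(x)}\frac{W'(u)}{\e}(y-x)\cdot\nabla u=\int_{B_r(x)}\frac{1}{\e}(y-x)\cdot\nabla(W(u))=r\int_{\partial B_r(x)}\frac{W(u)}{\e}-\frac{n}{\e}\int_{B_r(x)}W(u)$, again by integration by parts. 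Equating the two sides gives a relation among $\int_{\partial B_r(x)}\e|\nabla u|^2$, $\int_{\partial B_r(x)}W(u)/\e$, $\int_{\partial B_r(x)}((y-x)\cdot\nabla u)^2$, the bulk terms $\int_{B_r(x)}\e|\nabla u|^2$, $\int_{B_r(x)}W(u)/\e$, and the advection term $\e\int_{B_r(x)}(v\cdot\nabla u)((y-x)\cdot\nabla u)$.

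Finally I would substitute this identity into the expression $r^{1-n}I'(r)-(n-1)r^{-n}I(r)$ and collect terms. The combination is arranged precisely so that the $\int_{\partial B_r(x)}|\nabla u|^2$ and $\int_{\partial B_r(x)}W(u)$ surface terms cancel against $I'(r)$, leaving only the radial surface term $\frac{\e}{r^{n+1}}\int_{\partial B_r(x)}((y-x)\cdot\nabla u)^2$; the bulk terms recombine into $\frac{1}{r^n}\int_{B_r(x)}\big(\frac{W(u)}{\e}-\frac{\e|\nabla u|^2}{2}\big)$ after the $(n-1)$, $(n-2)$ and $n$ factors are accounted for; and the advection term survives as $\frac{\e}{r^n}\int_{B_r(x)}(v\cdot\nabla u)((y-x)\cdot\nabla u)$, which is exactly \eqref{eq2a}. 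The one point requiring care — the main (though routine) obstacle — is bookkeeping the dimensional constants so that the surface terms cancel exactly; this is the familiar mechanism behind Huisken-type monotonicity for the Allen--Cahn energy, and the advection term simply rides along unchanged since it was never integrated by parts.
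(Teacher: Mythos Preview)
Your approach is essentially the paper's: the paper multiplies \eqref{eq1} by $\nabla u\cdot g$ with $g^j(y)=(y_j-x_j)\rho(|y|)$, integrates by parts to obtain the inner-variation identity \eqref{eq2}, lets $\rho\to\chi_{B_r}$, and divides by $r^n$ --- exactly equivalent to your direct integration over $B_r$ with boundary terms. Two sign slips to fix in your intermediate formulas: rearranging \eqref{eq1} gives $\e\Delta u=W'(u)/\e-\e v\cdot\nabla u$ (not $+$), and the bulk contribution from the Laplacian term should be $+\tfrac{(n-2)\e}{2}\int_{B_r}|\nabla u|^2$ (not $-$); with these corrected the bookkeeping you describe does yield \eqref{eq2a}.
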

\begin{proof}
Multiply both sides of (\ref{eq1}) by $\nabla u \cdot g$, where $g = (g^1, \cdots,g^n) \in C^1_c (U_1; \mathbb{R}^n)$.
By integration by parts, we obtain 
\begin{eqnarray}
\int \Big(  \Big(\frac{\varepsilon |\nabla u|^2}{2}+\frac{W}{\varepsilon}\Big) \mathrm{div}g
-\varepsilon\sum_{i,j} u_{y_i} u_{y_j} g^i_{y_j}   + \varepsilon(v \cdot \nabla u)(\nabla u \cdot g)
\Big) =0.   \label{eq2}
\end{eqnarray}
We assume that $x=0$ after a suitable translation and let $g^j(y)=y_j \rho(|y|)$. Writing $r=|y|$, (\ref{eq2}) becomes
\begin{equation*}
\begin{split}
\int \Big(  &\Big(\frac{\varepsilon |\nabla u|^2}{2}+ \frac{W}{\varepsilon} \Big) \left(r \rho '  + n \rho \right)
- \varepsilon \frac{\rho '}{r} (y \cdot \nabla u)^2 \\ & - \varepsilon |\nabla u|^2 \rho + \varepsilon (\nabla u \cdot v)
(\nabla u \cdot y) \rho \Big) = 0.
\end{split}
\end{equation*}
We choose $\rho$ which is a smooth approximation of $\chi_{B_r}$, the characteristic
function of $B_r$, and then we take a limit $\rho \to \chi_{B_r}$. Then we have
\begin{equation*}
\begin{split}
&-(n-1)\int_{B_r} \left( \frac{\varepsilon |\nabla u|^2}{2}+\frac{W}{\varepsilon} \right)
+ r \int_{ \partial B_r} \left( \frac{\varepsilon |\nabla u|^2}{2}+\frac{W}{\varepsilon} \right)  \\
&= \int_{B_r} \left(\frac{W}{\varepsilon}-\frac
{\varepsilon|\nabla u|^2}{2}\right)+\frac{\varepsilon}{r} \int_{\partial B_r} (y \cdot \nabla u)^2 + \varepsilon \int_{B_r} (\nabla u \cdot v)( \nabla u \cdot y).
\end{split}
\end{equation*}
By dividing the above equation by $r^n$, the lemma follows.
\end{proof}

 We need the following lemma to control the negative contribution of the right-hand side
 of \eqref{eq2a}. 
\begin{lemm} 
Given $0<s<1$, there exist constants $0<\beta_{\be[\label{b1}]}<1$ and $0< \e_{\ce[\label{e2}]}<1$ which depend only on $c_{\ref{c0}}$, $\lambda_0$, $W$, $n$, $p$ and $s$ such that, if $\e\leq \e_
{\ref{e2}}$,
\begin{equation}
\sup_{B_{s}} \left( \frac{\varepsilon}{2} |\nabla u|^2 - \frac{W(u)}{\varepsilon}\right) \leq \varepsilon^{-\beta_{\ref{b1}}} .
\label{eq4}
\end{equation} \label{lemm2}
\end{lemm}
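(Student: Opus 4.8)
Denote by $\xi:=\tfrac{\e}{2}|\nabla u|^2-\tfrac{W(u)}{\e}$ the \emph{discrepancy}; by Lemma~\ref{prereg} we already have $\xi\le c_{\ref{c1}}^2/(2\e)$ on $U_{1-\e}$, and the issue is to improve the exponent $-1$ to some $-\beta$ with $\beta<1$. Two further a priori facts will be used: from \eqref{eq1c}, $\int_{B_s}\xi^+\le\int_{B_s}(\tfrac{\e}{2}|\nabla u|^2+\tfrac{W(u)}{\e})\le\sigma E_0$ (so $\int_{B_s}(\xi^+)^2\le\tfrac{c_{\ref{c1}}^2}{2\e}\int_{B_s}\xi^+\le C\e^{-1}$), and from Lemma~\ref{lemm8}, $|u|\le 1+\e^{\eta}$ on $B_{(s+1)/2}$, which, together with $W\in C^3$ and $W(\pm1)=W'(\pm1)=0$, controls $|W'(u)|$ near the plateaus. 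The plan is to combine these with a differential inequality for $\xi$ of the type exploited by Modica for entire Allen--Cahn solutions, treating $\e\,v\cdot\nabla u$ as a small perturbation by means of \eqref{eq1d}.

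The first step is the pointwise identity, obtained by differentiating $\xi$ twice and inserting \eqref{eq1} in the form $\e^2\Delta u=W'(u)-\e^2 v\cdot\nabla u$: on $\{|\nabla u|\neq0\}$,
\begin{equation*}
\Delta\xi=\e\bigl(|\nabla^2 u|^2-(\Delta u)^2\bigr)-\e\,\di\bigl((v\cdot\nabla u)\,\nabla u\bigr).
\end{equation*}
Next I would rescale to unit scale exactly as in the proof of Lemma~\ref{prereg} ($\tilde u(x)=u(\e x)$, $\tilde v(x)=\e v(\e x)$), so that the claim becomes $\tilde\xi:=\tfrac12|\nabla\tilde u|^2-W(\tilde u)\le\e^{1-\beta}$ on $B_{s/\e}$, with $\|\tilde v\|_{W^{1,p}(U_{\e^{-1}})}=O(\e^{2-n/p})\to0$ and $|\nabla\tilde u|\le c_{\ref{c1}}$; in these variables $\Delta\tilde\xi=|\nabla^2\tilde u|^2-(\Delta\tilde u)^2-\di((\tilde v\cdot\nabla\tilde u)\nabla\tilde u)$. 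Using the orthogonal splitting of $\nabla^2\tilde u$ relative to $\nu:=\nabla\tilde u/|\nabla\tilde u|$ (so $\Delta\tilde u=\tilde u_{\nu\nu}-|\nabla\tilde u|H$ with $H$ the mean curvature of the level set, and $\tilde u_{\nu\nu}=\Delta\tilde u+|\nabla\tilde u|H$), together with the Cauchy--Schwarz inequality on the tangential part of $\nabla^2\tilde u$, one rewrites $|\nabla^2\tilde u|^2-(\Delta\tilde u)^2$ so that $\tilde\xi$ becomes a subsolution of a linear elliptic operator with (possibly unbounded) drift but with \emph{no zeroth-order term}, up to a remainder controlled pointwise by the level-set curvature and by $|\tilde v|$ and $|\nabla\tilde v|$. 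A De Giorgi--Nash--Moser local maximum estimate for such subsolutions, applied on balls of size comparable to $s/\e$, then gives $\sup_{B_{s/\e}}\tilde\xi$ bounded by the (rescaled) mean of $\tilde\xi^+$, which is $O(\e)$ by the bound on $\int_{B_s}\xi^+$, plus remainder norms which are $O(\e^{\text{positive power}})$ by \eqref{eq1d} and the $W^{2,\frac{np}{n-p}}_{loc}$ estimate of Lemma~\ref{prereg} (note $\tfrac{np}{2(n-p)}>\tfrac n2$ by \eqref{re-eq3}); balancing these yields the claim for a suitable $\beta\in(0,1)$ and all small $\e$. The bound $u\ge-1-\e^{\eta}$ is treated symmetrically.

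The crux --- and the expected main obstacle --- is the sign of $|\nabla^2\tilde u|^2-(\Delta\tilde u)^2$: in the core of the diffused interface $|\nabla^2\tilde u|\sim1$, and a naive bound using only $|\nabla^2\tilde u|^2\ge\tfrac1n(\Delta\tilde u)^2$ leaves a remainder of order $1$ (equivalently, of order $\e^{-3}$ before rescaling), which would be useless. One must genuinely use that $\tilde u$ is a \emph{global} solution with a uniform gradient bound --- hence, in the limit $\e\to0$, an entire solution of $-\Delta\tilde u+W'(\tilde u)=0$, for which Modica's gradient estimate forces $\tfrac12|\nabla\tilde u|^2\le W(\tilde u)$ --- and the relevant remainder is then seen to be essentially the tangential quantity $|\nabla\tilde u|^2\bigl(|A|^2-H^2\bigr)$ ($A$ the second fundamental form of the level set), which is small because the level sets of $\tilde u$ are nearly flat at this scale; making this quantitative, using only the estimates available before the monotonicity formula (Theorem~\ref{theo5} and its consequences are \emph{not} yet at our disposal, since Lemma~\ref{lemm2} is an ingredient in their proof), is where the work lies. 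As a consistency check, a pure compactness argument --- rescaling about a near-maximiser of $\xi_i$ at scale $\e_i$, where $\|\e_i v_i(\e_i\cdot)\|_{L^{np/(n-p)}}=O(\e_i^{2-n/p})\to0$ and the limit, an entire Allen--Cahn solution, would violate Modica's estimate if $\e_i\sup_{B_s}\xi_i\not\to0$ --- already yields $\sup_{B_s}\xi_\e=o(\e^{-1})$, but not the quantitative rate.
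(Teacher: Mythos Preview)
Your proposal has two genuine gaps.

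First, your argument uses the energy bound \eqref{eq1c}: you invoke $\int_{B_s}\xi^+\le E_0$ to control the mean of $\tilde\xi^+$ in the De Giorgi--Nash--Moser step. But the statement requires $\beta_{\ref{b1}}$ and $\e_{\ref{e2}}$ to depend only on $c_{\ref{c0}},\lambda_0,W,n,p,s$ --- \emph{not} on $E_0$. This is not a cosmetic matter: in the proof of Theorem~\ref{theo5} the lemma is applied (through Lemma~\ref{lemma3} and Theorem~\ref{disc}) to the rescaled pair $(\tilde u,\tilde v)$ on $U_{3/4}$, where the only available energy bound is \eqref{eq20}, namely $l^{1-n}E_0$, which is not uniformly bounded. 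If your constants depended on the energy, the whole bootstrap for the upper density ratio would collapse.

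Second --- and you say this yourself --- the heart of the argument is not carried out. The identity $\Delta\tilde\xi=|\nabla^2\tilde u|^2-(\Delta\tilde u)^2-\di((\tilde v\cdot\nabla\tilde u)\nabla\tilde u)$ is correct, but $|\nabla^2\tilde u|^2-(\Delta\tilde u)^2$ has no sign, and in the transition layer it is $O(1)$ pointwise while the layer has volume $\sim\e^{1-n}$ in the rescaled picture; feeding this as a source into a local maximum principle on $B_{s/\e}$ gives a contribution that blows up, not one that vanishes. Your remedy --- the level-set decomposition and the claim that the remainder $|\nabla\tilde u|^2(|A|^2-H^2)$ is small because the level sets are ``nearly flat at this scale'' --- is exactly what needs proof, and you have no a priori control on the second fundamental form of the level sets at this stage of the argument. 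The compactness remark at the end confirms the qualitative picture but, as you note, gives no rate.

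The paper's route avoids both difficulties. After the same rescaling, it mollifies $\tilde u$ at an intermediate scale $\e^{\beta_{\ref{b5}}}$ to obtain a $C^3$ function $f$ satisfying $-\Delta f+W'(f)=g$, where $g$ collects the mollified advection term and the commutator $W'(f)-W'(\tilde u)*\phi_{\e^{\beta_{\ref{b5}}}}$. One then applies a black-box estimate (Lemma~\ref{lemm9}, from \cite{TY05}) on balls of a second intermediate radius $\e^{-\beta_{\ref{b4}}}$: this lemma takes as input only $\sup|f|\le 1+\e^{\eta}$, the crude bound $\tfrac12|\nabla f|^2-W(f)\le c_{\ref{c11}}$, and the $W^{1,n}$ norm of $g$ on that ball, and returns $\sup(\tfrac12|\nabla f|^2-W(f))\le c_{\ref{c12}}(\e^{-\beta_{\ref{b4}}}\|g\|_{W^{1,n}}+\e^{\eta})$. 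The quantities $\|g\|_{W^{1,n}(U_{\e^{-\beta_{\ref{b4}}}})}$ are then bounded by positive powers of $\e$ using only \eqref{re-eq1} and the H\"older estimate \eqref{hu12a}; choosing $\beta_{\ref{b4}},\beta_{\ref{b5}}$ appropriately and transferring from $f$ back to $\tilde u$ yields \eqref{eq4}. No energy bound enters, and the delicate sign issue you identify is hidden inside Lemma~\ref{lemm9}, whose proof in \cite{TY05} is a genuine maximum-principle argument for the discrepancy of $-\Delta f+W'(f)=g$ rather than a De Giorgi iteration.
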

The proof of Lemma \ref{lemm2} is deferred to the end of this section. 
Next, 
for $B_r(x)\subset U_1$ and $0<r<{\rm dist}(x,\partial U_1)$, define
\[
E(r,x):= \frac{1}{r^{n-1}} \int_{B_r(x)} \left(\frac{\varepsilon |\nabla u|^2}{2}+\frac{W(u)}{\varepsilon}\right).
\]

Using Lemma \ref{lemm1aa} and Lemma \ref{lemm2}, 
we prove:
\begin{lemm} 
Given $0<s<1$, there exist constants $0<\e_{\ce[\label{e3}]},c_{\cc[\label{c4}]},c_{\cc[\label{c5}]}<1 $ which depend only on $c_0,\lambda_0,W,n,p$ and $s$ such that, if $B_{\varepsilon^{\beta_{\ref{b1}}}}(x) \subset  U_s$, 
$|u(x)|\leq \alpha$ and $\e \leq \e_{\ref{e3}}$, then
\begin{equation}
E(r,x)\geq c_{\ref{c4}} \ \ \ \mbox{for all} \  \varepsilon \leq r \leq c_{\ref{c5}} \varepsilon^{\beta_{\ref{b1}}}. \label{eq7}
\end{equation}
\label{lemma3}
\end{lemm}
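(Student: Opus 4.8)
The plan is to obtain \eqref{eq7} by combining a crude but $\varepsilon$‑uniform lower bound for the scaled energy at radius $\varepsilon$ with the monotonicity identity \eqref{eq2a}, which transports that lower bound up to radius $c_{\ref{c5}}\varepsilon^{\beta_{\ref{b1}}}$. Throughout one takes $\varepsilon<\min\{1/2,1-s\}$, so that $B_{\varepsilon^{\beta_{\ref{b1}}}}(x)\subset U_s\subset U_{1-\varepsilon}$ and the gradient bound \eqref{u12a} is available on $U_s$. \emph{Step 1 (bound at radius $\varepsilon$).} Set $d:=\min\{1/2,(1-\alpha)/(2c_{\ref{c1}})\}$. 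For $y\in B_{\varepsilon d}(x)$ the bound \eqref{u12a} and the hypothesis $|u(x)|\le\alpha$ give $|u(y)-u(x)|\le(c_{\ref{c1}}/\varepsilon)(\varepsilon d)\le(1-\alpha)/2$, hence $|u(y)|\le(1+\alpha)/2<1$; since assumptions (a), (b) force $W>0$ on $(-1,1)$, we get $W(u(y))\ge m_W:=\min_{|t|\le(1+\alpha)/2}W(t)>0$ on $B_{\varepsilon d}(x)$, whence
\[
E(\varepsilon,x)\ \ge\ \frac{1}{\varepsilon^{n}}\int_{B_{\varepsilon d}(x)}W(u)\ \ge\ \omega_n\,m_W\,d^{\,n}\ =:\ \theta_0,
\]
a positive constant depending only on $c_0,\lambda_0,n,p,W$.

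\emph{Step 2 (differential inequality).} For $\varepsilon\le r\le c_{\ref{c5}}\varepsilon^{\beta_{\ref{b1}}}$ with $c_{\ref{c5}}<1$ one has $B_r(x)\subset B_{\varepsilon^{\beta_{\ref{b1}}}}(x)\subset B_s$, so Lemma \ref{lemm2} applies on $B_r(x)$; discarding the nonnegative boundary term in \eqref{eq2a} and using $|(y-x)\cdot\nabla u|\le r|\nabla u|$ yields
\[
\frac{d}{dr}E(r,x)\ \ge\ -\,\omega_n\,\varepsilon^{-\beta_{\ref{b1}}}\ -\ \frac{\varepsilon}{r^{n-1}}\int_{B_r(x)}|v|\,|\nabla u|^2 .
\]
I would estimate the last integral by writing $\varepsilon|\nabla u|^2\le c_{\ref{c1}}|\nabla u|$ (from \eqref{u12a}) and applying H\"older with exponent $q=\tfrac{np}{n-p}$ (so $v\in L^q$ by \eqref{eq1d}) and its conjugate $q'$, together with $|\nabla u|\le c_{\ref{c1}}/\varepsilon$:
\[
\frac{\varepsilon}{r^{n-1}}\int_{B_r(x)}|v|\,|\nabla u|^2\ \le\ \frac{c_{\ref{c1}}}{r^{n-1}}\,\|v\|_{L^{q}(B_r(x))}\,\|\nabla u\|_{L^{q'}(B_r(x))}\ \le\ c\,\varepsilon^{-1}\,r^{\frac{2p-n}{p}},
\]
since $\tfrac{n}{q'}-(n-1)=\tfrac{2p-n}{p}$, which is positive \emph{precisely because} $p>n/2$; here $c=c(c_0,\lambda_0,n,p,W)$.

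\emph{Step 3 (integration and choice of constants).} Integrating the last display from $\varepsilon$ to $r\le c_{\ref{c5}}\varepsilon^{\beta_{\ref{b1}}}$, using $r-\varepsilon\le c_{\ref{c5}}\varepsilon^{\beta_{\ref{b1}}}$ and $\int_\varepsilon^r\rho^{(2p-n)/p}\,d\rho\le\tfrac{p}{3p-n}\,r^{(3p-n)/p}$ (the exponent $\tfrac{3p-n}{p}$ is positive), Step 1 gives
\[
E(r,x)\ \ge\ \theta_0\ -\ \omega_n\,c_{\ref{c5}}\ -\ c\,\tfrac{p}{3p-n}\,c_{\ref{c5}}^{\,\frac{3p-n}{p}}\,\varepsilon^{\,\beta_{\ref{b1}}\frac{3p-n}{p}-1}.
\]
Since $p>n/2$ we have $\tfrac{p}{3p-n}<1$; moreover \eqref{eq4} only becomes weaker as $\beta_{\ref{b1}}$ increases, so one may assume the constant $\beta_{\ref{b1}}$ of Lemma \ref{lemm2} satisfies $\beta_{\ref{b1}}\in(\tfrac{p}{3p-n},1)$, making the exponent $\beta_{\ref{b1}}\tfrac{3p-n}{p}-1$ positive. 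Then I would first fix $c_{\ref{c5}}<1$ small enough that $\omega_n c_{\ref{c5}}\le\theta_0/4$, and then fix $\varepsilon_{\ref{e3}}\le\min\{\varepsilon_{\ref{e2}},1/2,1-s\}$ small enough that the last term is $\le\theta_0/4$ for $\varepsilon\le\varepsilon_{\ref{e3}}$; taking $c_{\ref{c4}}:=\min\{1/2,\theta_0/2\}$ then yields \eqref{eq7}.

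The main obstacle is the power‑counting in Steps 2--3: one must verify that the loss of a factor $\varepsilon^{-1}$ in estimating the advection term through \eqref{u12a} is exactly compensated by the favourable $r$‑power $\tfrac{2p-n}{p}>0$ coming from the Sobolev class $L^{np/(n-p)}$ of $v$, and that after integrating all the way up to $r=c_{\ref{c5}}\varepsilon^{\beta_{\ref{b1}}}$ the remaining error $\varepsilon^{\beta_{\ref{b1}}(3p-n)/p-1}$ still tends to $0$; this forces $\beta_{\ref{b1}}>p/(3p-n)$, a constraint compatible with $\beta_{\ref{b1}}<1$ only under the standing hypothesis $p>n/2$, and ensuring this compatibility (alongside the companion role of \eqref{eq4} for the discrepancy term) is where the constants of Lemma \ref{lemm2} must be chosen with foresight.
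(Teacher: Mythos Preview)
Your proof is correct and follows essentially the same route as the paper: a lower bound for $E(\varepsilon,x)$ from the gradient estimate \eqref{u12a} and $|u(x)|\le\alpha$, combined with integrating the monotonicity identity \eqref{eq2a} and controlling the discrepancy via Lemma~\ref{lemm2} and the advection term via H\"older and \eqref{u12a}. Your exponent $(3p-n)/p$ is just $3-n/p$, and your constraint $\beta_{\ref{b1}}>p/(3p-n)$ is exactly the paper's ``$(3-\tfrac{n}{p})\beta_{\ref{b1}}-1>0$''; the only cosmetic difference is that the paper replaces both factors of $|\nabla u|$ by $c_{\ref{c1}}/\varepsilon$ at once (getting $c_{\ref{c1}}^2\varepsilon^{-1}\int|v|$), whereas you replace one and then H\"older against $\|\nabla u\|_{L^{q'}}$, arriving at the same power of $r$.
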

\begin{proof}
By integrating (\ref{eq2a}) over [$\varepsilon,r$], we have
\begin{eqnarray}
E(r,x)-E(\varepsilon,x) \geq -\int^r_\varepsilon \frac{d\tau}{\tau^n} 
\int_{B_\tau(x)}\left( \frac{\varepsilon}{2} |\nabla u|^2 - \frac{W(u)}{\varepsilon}\right)_+ \nonumber\\
+\int^r_\varepsilon \frac{d\tau}{\tau^n} 
\int_{B_\tau(x)} \varepsilon  (\nabla u \cdot v)( \nabla u \cdot (y-x)). \label{eq8}
\end{eqnarray}
By (\ref{eq4}) and $B_r(x)\subset U_s$, we have
\begin{eqnarray}
\int^r_\varepsilon \frac{d\tau}{\tau^n} 
\int_{B_\tau(x)}\left( \frac{\varepsilon}{2} |\nabla u|^2 - \frac{W(u)}{\varepsilon}\right)_+
\leq \omega_n r  \varepsilon^{-\beta_1} . \label{eq9}
\end{eqnarray}
By \eqref{u12a} and \eqref{eq1d}, we have
\begin{equation}
\begin{split}
\Big|\int^r_\varepsilon \frac{d\tau}{\tau^n} 
\int_{B_\tau(x)} \varepsilon  (\nabla u \cdot v)( \nabla u \cdot (y-x))\Big| &\leq
\int^r_0 \frac{d\tau}{\tau^{n-1}} \int_{B_\tau(x)} c_{\ref{c1}}^2 \varepsilon^{-1} |v|  \\
&\leq  c(\lambda_0,n,p,c_{\ref{c1}}) r^{3-\frac{n}{p}} \varepsilon^{-1}. 
\end{split}
\label{eq10}
\end{equation}
Since $|u(x)| \leq \alpha$, using \eqref{u12a}, we have $|u(y)| \leq \frac{\alpha+1}{2}$ for all
 $y \in B_{\frac{(1-\alpha)\varepsilon}{2c_{\ref{c1}}}}(x)$. By choosing a larger $c_{\ref{c1}}$
 if necessary, we may assume $\frac{(1-\alpha)}{2c_{\ref{c1}}} \leq 1$. Define
\begin{eqnarray}
c_{\ref{c4}}:=\frac{\omega_n}{2} \frac{(1-\alpha)^n}{(2c_{\ref{c1}})^n} \min_{|t| \leq \frac{1+\alpha}{2}} W(t)>0. \nonumber
\end{eqnarray}
With this choice, we obtain
\begin{eqnarray}
E(\varepsilon,x) &\geq& \frac{1}{\varepsilon^{n-1}} 
\int_{B_{\frac{(1-\alpha)\varepsilon}{2c_{\ref{c1}}}}(x)} \frac{W(u)}{\varepsilon} \nonumber \\
&\geq& \omega_n \frac{(1-\alpha)^n}{(2c_{\ref{c1}})^n}
\min_{|t| \leq \frac{1+\alpha}{2}} W(t) = 2c_{\ref{c4}}. \label{eq11}
\end{eqnarray}
Since a larger $\beta_{\ref{b1}}$ satisfies \eqref{eq4} as well, we may assume $(3-\frac{n}{p})\beta_{\ref{b1}} -1 >0$ by choosing $\beta_{\ref{b1}}<1$ sufficiently close to 1. Then we may show that
the sum of \eqref{eq9} and \eqref{eq10} may be bounded from above by $c_{\ref{c4}}$ for 
sufficiently small $\e$ and $c_{\ref{c5}}$ if $r\leq c_{\ref{c5}}\e^{\beta_{\ref{b1}}}$. Then, 
(\ref{eq7}) follows from \eqref{eq8}-\eqref{eq11}.
\end{proof}

\begin{theo} 
\label{disc}
Given $0<s<1$, there exist constants $0<\e_{\ce\label{e4}},\beta_{\be \label{b2}} <1$ and $0<c_{\cc \label{c5a}}$ which depend only on $c_0$, $\lambda_0$, $W$, $n$, $p$ and $s$ such that, if $B_{r}(x) \subset U_s$, $c_{\ref{c5}} \varepsilon^{\beta_{\ref{b1}}}<r$ and $\e\leq \e_{\ref{e4}}$, then
\begin{equation}
\frac{1}{r^n} \int_{B_r(x)} \left( \frac{\varepsilon}{2} |\nabla u|^2 - \frac{W(u)}{\varepsilon}\right)_+ \leq 
\frac{c_{\ref{c5a}}}{r^{1-\beta_{\ref{b2}}}}(E(r,x)+1) .
\label{eq12}
\end{equation}
\end{theo}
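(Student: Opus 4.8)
\emph{Proposed strategy.} The plan is to split $B_r(x)$ into the ``transition core'' $R_{\mathrm{tr}}:=B_r(x)\cap\{|u|\le\alpha\}$ and the ``bulk'' $R_{\mathrm{bk}}:=B_r(x)\cap\{|u|>\alpha\}$, to bound the discrepancy $\xi:=\tfrac\e2|\nabla u|^2-\tfrac{W(u)}\e$ on each, and to trade powers of $\e$ for powers of $r$ via the hypothesis $c_{\ref{c5}}\e^{\beta_{\ref{b1}}}<r$. Writing $e:=\tfrac\e2|\nabla u|^2+\tfrac{W(u)}\e$ one has $\int_{B_r(x)}e=r^{n-1}E(r,x)$, so \eqref{eq12} is equivalent to $\int_{B_r(x)}\xi_+\le c_{\ref{c5a}}\,r^{\,n-1+\beta_{\ref{b2}}}(E(r,x)+1)$. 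Since $\int_{B_r(x)}\xi_+\le\int_{U_1}e\le E_0$ by \eqref{eq1c}, this is automatic for $r$ bounded below once $c_{\ref{c5a}}$ is large; hence I may take $r$ so small that every ball $B_{\e^{\beta_{\ref{b1}}}}(x_j)$ arising below lies in $U_{(1+s)/2}$ and all smallness conditions on $\e$ hold (this fixes $\e_{\ref{e4}}$, depending only on $c_0,\lambda_0,W,n,p,s$). The workhorse is a \emph{covering estimate}: if $K\subset B_r(x)$ satisfies $\dist(z,\{|u|\le\alpha\})\le\rho$ for all $z\in K$, with $\e\le\rho\le c_{\ref{c5}}\e^{\beta_{\ref{b1}}}$, then covering $K$ by balls $B_{2\rho}(x_j)$ centred at a maximal $\rho$-separated subset of $\{|u|\le\alpha\}$, inserting the lower bound $\int_{B_\rho(x_j)}e\ge c_{\ref{c4}}\rho^{n-1}$ from Lemma \ref{lemma3}, and using bounded overlap gives $\mathcal L^n(K)\le c\,\rho\,r^{n-1}E(r,x)$ (after peeling off a shell of width $\rho$, of measure $\le c\rho r^{n-1}$, which is what allows $E(r,x)$ in place of $E(2r,x)$).

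\emph{Transition core.} On $R_{\mathrm{tr}}$ I would use $\xi_+\le\e^{-\beta_{\ref{b1}}}$ (Lemma \ref{lemm2}) and the covering estimate with $\rho=\e$, so $\mathcal L^n(R_{\mathrm{tr}})\le c\,\e\,r^{n-1}E(r,x)$ and $\int_{R_{\mathrm{tr}}}\xi_+\le c\,\e^{1-\beta_{\ref{b1}}}r^{n-1}E(r,x)$. Since $c_{\ref{c5}}\e^{\beta_{\ref{b1}}}<r$,
\[
\e^{1-\beta_{\ref{b1}}}=\big(\e^{\beta_{\ref{b1}}}\big)^{(1-\beta_{\ref{b1}})/\beta_{\ref{b1}}}\le c_{\ref{c5}}^{-\beta}\,r^{\beta},\qquad \beta:=\tfrac{1-\beta_{\ref{b1}}}{\beta_{\ref{b1}}}\in(0,1),
\]
the membership $\beta\in(0,1)$ being available because the proof of Lemma \ref{lemma3} allows $\beta_{\ref{b1}}$ arbitrarily close to $1$. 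Hence $\int_{R_{\mathrm{tr}}}\xi_+\le c\,r^{n-1+\beta}E(r,x)$.

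\emph{Bulk.} On $R_{\mathrm{bk}}$, where $W''\ge\kappa$, I would blow up: $\tilde u(y)=u(\e y)$, $\tilde v(y)=\e v(\e y)$ satisfy $-\Delta\tilde u+W'(\tilde u)=\tilde v\cdot\nabla\tilde u$ by \eqref{re-eq2}, $|\nabla\tilde u|\le c_{\ref{c1}}$ by \eqref{u12a}, $\tilde v$ is small by \eqref{re-eq1}, and $\tfrac12|\nabla\tilde u(y)|^2-W(\tilde u(y))=\e\,\xi(\e y)$. On a component of $\{|\tilde u|>\alpha\}$, writing $\tilde\psi:=\pm1-\tilde u$ with the sign making $|\tilde\psi|$ small (Lemma \ref{lemm8}), a Taylor expansion at $\pm1$ recasts the equation as the linear equation $\Delta\tilde\psi-a(y)\tilde\psi=\tilde v\cdot\nabla\tilde u$ with $\kappa\le a(y)\le\sup_{|t|\le c_{\ref{c1}}}W''$, while $|\tilde\psi|\le1-\alpha$ with equality on $\{|\tilde u|=\alpha\}$. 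The favourable sign of $a$ permits a barrier comparison on balls contained in $\{|\tilde u|>\alpha\}$ which, combined with interior elliptic estimates, yields exponential decay of $|\nabla\tilde u|$ away from $\{|\tilde u|\le\alpha\}$; in the original scale,
\[
\xi_+(z)\ \le\ c\,\e^{-1}e^{-c\,\dist(z,\{|u|\le\alpha\})/\e}\ +\ (\text{a local }v\text{-term})\qquad(z\in R_{\mathrm{bk}}).
\]
I would then fix $d_*:=C_0\,\e\log(1/\e)$ with $C_0=C_0(\kappa)$ so large that the first term is $\le c\,\e$ whenever $\dist(z,\{|u|\le\alpha\})\ge d_*$ (and note $\e\le d_*\le c_{\ref{c5}}\e^{\beta_{\ref{b1}}}$ for $\e$ small). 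On $R_{\mathrm{bk}}\cap\{\dist(\cdot,\{|u|\le\alpha\})\ge d_*\}$ the first term integrates to $\le c\,\e\,r^n$ and, by \eqref{eq1d} with Hölder and Fubini, the $v$-term integrates to $\le c\,r^{n-1+\beta''}\lambda_0^2$ for some $\beta''>0$ (this is where $p>n/2$ enters); using $\e\le c_{\ref{c5}}^{-1/\beta_{\ref{b1}}}r^{1/\beta_{\ref{b1}}}$, this piece is $\le c\,r^{n-1+\min(\beta,\beta'')}(E(r,x)+1)$. On $R_{\mathrm{bk}}\cap\{\dist(\cdot,\{|u|\le\alpha\})<d_*\}$ I would use $\xi_+\le\e^{-\beta_{\ref{b1}}}$ (Lemma \ref{lemm2}) and the covering estimate with $\rho=d_*$, giving $\mathcal L^n\big(R_{\mathrm{bk}}\cap\{\dist(\cdot,\{|u|\le\alpha\})<d_*\}\big)\le c\,\e\log(1/\e)\,r^{n-1}E(r,x)$ and thus a contribution $\le c\,\e^{1-\beta_{\ref{b1}}}\log(1/\e)\,r^{n-1}E(r,x)\le c\,r^{n-1+\beta'}E(r,x)$ for a slightly smaller $\beta'\in(0,\beta)$ (a fixed power of $\e$ swallows the logarithm). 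Adding the three bounds and setting $\beta_{\ref{b2}}:=\min(\beta',\beta'')$, $c_{\ref{c5a}}:=c$, proves \eqref{eq12}.

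\emph{Main obstacle.} The hard part is the bulk estimate: proving exponential decay of $|\nabla\tilde u|$ away from the transition core, uniformly in the shape of $\{|\tilde u|>\alpha\}$ and in the presence of the advection term — this is where assumption (c), Lemma \ref{lemm8}, and the smallness \eqref{re-eq1} of $\tilde v$ all enter, via a comparison function dominating both the boundary value $1-\alpha$ on $\{|\tilde u|=\alpha\}$ and the inhomogeneity $\tilde v\cdot\nabla\tilde u$, the latter treated with $L^q$ ($q>n$) elliptic estimates rather than pointwise. The remainder is bookkeeping: the Besicovitch-covering / lower-density device, the shell trick keeping $E(r,x)$ rather than $E(2r,x)$, and — above all — the repeated exchange of powers of $\e$ for powers of $r$ licensed by $c_{\ref{c5}}\e^{\beta_{\ref{b1}}}<r$, which is exactly why the denominator in \eqref{eq12} is $r^{1-\beta_{\ref{b2}}}$ and not $r$.
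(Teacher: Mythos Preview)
Your overall strategy is sound and would lead to a correct proof, but you take a significantly harder route than the paper on the bulk region. The paper also splits $B_r(x)$ into a shell, a near-transition set, and a far-from-transition set, but at the intermediate scale $\e^{\beta_3}$ with $\beta_3=(1+\beta_1)/2$ rather than your $d_*=C_0\e\log(1/\e)$; the shell and near-transition pieces are handled exactly as in your covering device (the discrepancy bound $\e^{-\beta_1}$ times a volume controlled by Lemma~\ref{lemma3}). The real divergence is on the far set~$\mathcal C$: instead of proving pointwise exponential decay of $|\nabla u|$ via barriers and elliptic estimates, the paper simply differentiates the equation, tests with $u_{x_j}\phi^2$ for a Lipschitz cutoff $\phi$ vanishing on $\{|u|\le\alpha\}$, and uses $W''\ge\kappa$ on $\spt\phi$ to obtain directly
\[
\int \frac{\kappa}{\e}|\nabla u|^2\phi^2 \ \le\ c\Big(\e^{-2\beta_3}\!\int_{B_r}\e|\nabla u|^2 + \e^{-1}\|v\|_{L^{np/(n-p)}}^2\,r^{\,n-\frac{2(n-p)}{p}}\Big),
\]
an $L^2$ integral bound that yields the $\mathcal C$-contribution in two lines. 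Your barrier/comparison argument is the standard Allen--Cahn decay estimate and does work (the inhomogeneity $\tilde v\cdot\nabla\tilde u$ is small in $L^{np/(n-p)}$ by \eqref{re-eq1}, enough for $W^{2,q}$, $q>n$, interior estimates), but it is considerably more labor and introduces the logarithmic scale $d_*$ that the paper's choice $\e^{\beta_3}$ avoids entirely. What your approach buys is a pointwise picture of the decay; what the paper's buys is brevity. One minor slip: your reduction to small $r$ via ``$\int_{B_r}\xi_+\le E_0$'' would make $c_{\ref{c5a}}$ depend on $E_0$, which the statement forbids; use instead $\int_{B_r}\xi_+\le\int_{B_r}e=r^{n-1}E(r,x)$, which gives the large-$r$ case with $c_{\ref{c5a}}$ depending only on the lower cutoff for $r$.
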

\begin{proof}
The proof is similar to \cite[Proposition 3.4]{TY05} with a minor modification. Define $\beta_{\ref{b2}}:=\frac{1-\beta_{\ref{b1}}}{2\beta_{\ref{b1}}}$ and $\beta_{\be\label{b3}}:=\frac{1+\beta_{\ref{b1}}}{2}$. $\beta_{\ref{b2}}$
 and $\beta_{\ref{b3}}$ are chosen so that 
\begin{eqnarray}
\beta_{\ref{b1}} \beta_{\ref{b2}}= \beta_{\ref{b3}} - \beta_{\ref{b1}}, \\
0<\beta_{\ref{b2}}<1, \ \ \ \ 0<\beta_{\ref{b1}}<\beta_{\ref{b3}}<1.
\end{eqnarray}
We estimate the integral of (\ref{eq12}) by separating $B_r(x)$ into three disjoint sets. Define
\begin{eqnarray}
&\mathcal{A}& :=B_r(x) \backslash B_{r-\varepsilon^{\beta_{\ref{b3}}}}(x) \nonumber , \\
&\mathcal{B}&:=\{y\in B_{r-\varepsilon^{\beta_{\ref{b3}}}}(x)\, :\,  {\rm dist}(\{|u| \leq \alpha \},y)< \varepsilon^{\beta_{\ref{b3}}}  \} \nonumber , \\
&\mathcal{C}&:=\{y\in B_{r-\varepsilon^{\beta_{\ref{b3}}}}(x) \,:\, {\rm dist}(\{|u| \leq \alpha \},y) \geq \varepsilon^{\beta_{\ref{b3}}}  \} \nonumber .
\end{eqnarray}
Note that $r> c_{\ref{c5}}\varepsilon^{\beta_{\ref{b1}}}> \varepsilon^{\beta_{\ref{b3}}}$ for small $\e$.

The estimates of the integral over $\mathcal{A}$ and $\mathcal{B}$ are exactly the same as in \cite{TY05}. Namely, for $\mathcal A$, we use $\mathcal L^n(\mathcal A)\leq 
n\omega_n r^{n-1}\e^{\beta_{\ref{b3}}}$ and \eqref{eq4} as well as $r^{-1}\leq (c_{\ref{c5}}\e^{\beta_{\ref{b1}}})^{-1}$.
For $\mathcal B$, we use \eqref{eq7} and prove that $\mathcal L^n(\mathcal B)\leq c(n)\e^{n\beta_{\ref{b3}}}N$,
where $N$ is an integer satisfying $\mathcal B\subset \cup_{i=1}^N B_{5\e^{\beta_{\ref{b3}}}}(x_i)$. Here we only consider the estimate on $\mathcal{C}$ and refer the reader to the proof of \cite[Proposition 3.4]{TY05}. Define
\begin{eqnarray}
\phi(z) := \min \{1, \varepsilon^{-\beta_{\ref{b3}}} {\rm dist}(\{y\,:\,|y-x| \geq r \} \cup \{|u|\leq \alpha \},z ) \} \nonumber.
\end{eqnarray}
$\phi$ is a Lipschitz function and is 0 on $\{y\,:\,|y-x| > r \} \cup \{|u|< \alpha \}$, 1 on $\mathcal{C}$ and $|\nabla \phi|\leq
\varepsilon^{-\beta_{\ref{b3}}}$. Differentiate (\ref{eq1}) with respect to $x_j$, multiply it by $u_{x_j} \phi^2$ and 
sum over $j$. Then we have
\begin{equation}
\int \sum_{j} \varepsilon u_{x_j} \Delta u_{x_j} \phi^2= \int \frac{W''}{\varepsilon}|\nabla u|^2\phi^2 -
\e\sum_j\int(v\cdot\nabla u)_{x_j}\phi^2u_{x_j}. \label{eq1.20}
\end{equation}
By integration by parts, the Cauchy--Schwarz inequality and \eqref{u12a}, we obtain
\begin{eqnarray}
&&\int \varepsilon |\nabla^2 u |^2 \phi^2 +\frac{W''}{\varepsilon}|\nabla u |^2 \phi^2 \nonumber \\
&=& \int -\sum_{i,j} 2\varepsilon u_{x_j}u_{x_i x_j} \phi \phi_{x_i} - \varepsilon \nabla u \cdot v
(\Delta u \phi^2 +2 \phi \nabla u \cdot\nabla \phi) \nonumber \\
&\leq& \frac{1}{2} \int \varepsilon |\nabla^2 u |^2 \phi^2 +c_{\cc\label{c6}}\int (\varepsilon 
|\nabla u|^2|\nabla \phi|^2 + |v|^2 \phi^2 \varepsilon^{-1}),
\end{eqnarray}
where $c_{\ref{c6}}$ depends on $c_{\ref{c1}}$. 
Since $|u| \geq \alpha$ on the support of $\phi$, we have $W''\geq \kappa$. Thus
\begin{eqnarray}
\int \frac{\varepsilon}{2} |\nabla^2 u|^2 \phi^2 + \frac{\kappa}{\varepsilon}|\nabla u|^2 \phi^2
\leq c_{\ref{c6}} \int(\varepsilon 
|\nabla u|^2|\nabla \phi|^2 + |v|^2 \phi^2 \varepsilon^{-1}). \label{eq22a}
\end{eqnarray}
By $|\nabla \phi| \leq \varepsilon^{-\beta_{\ref{b3}}}$ and (\ref{eq1d}), we have
\begin{eqnarray}
\int \frac{\kappa}{\varepsilon}|\nabla u|^2 \phi^2 &\leq& 
c_{\ref{c6}} \left(\varepsilon^{-2\beta_{\ref{b3}}} \int_{B_r} \varepsilon |\nabla u|^2 +\varepsilon^{-1} \|v\|^2_{L^{\frac{np}{n-p}}}
(\omega_n r^n)^{\frac{np-2(n-p)}{np}}  \right) \nonumber \\
&\leq& c_{\cc\label{c7}} \left(\varepsilon^{-2\beta_{\ref{b3}}} \int_{B_r} \varepsilon |\nabla u|^2 + \varepsilon^{-1} r^{n-\frac{2(n-p)}{p}} \right), \label{eq23}
\end{eqnarray}
where $c_{\ref{c7}}=c_{\ref{c6}}+c(n,p)\lambda_0^2$. 
Since $\phi =1$ on $\mathcal{C}$, multiplying (\ref{eq23}) by $\frac{\varepsilon^2}{\kappa r^n}$,
\begin{eqnarray}
\frac{1}{r^n} \int_{\mathcal{C}} \frac{\varepsilon}{2} |\nabla u|^2 \leq \frac{c_{\ref{c7}}}{\kappa} \left(
\frac{\varepsilon^{2-2\beta_{\ref{b3}}}}{r} E(r,x) + \varepsilon r^{2-\frac{2n}{p}}\right).
\end{eqnarray}
By the definitions of $\beta_{\ref{b1}}$, $\beta_{\ref{b2}}$, $\beta_{\ref{b3}}$ and $r \geq c_{\ref{c5}} \varepsilon^{\beta_1}$, we have
\begin{eqnarray}
\frac{\varepsilon^{2-2\beta_{\ref{b3}}}}{r} \leq \frac{\varepsilon^{\beta_{\ref{b1}} \beta_{\ref{b2}}}}{r^{1-\beta_{\ref{b2}}}c_{\ref{c5}}^{\beta_{\ref{b2}}}},
\end{eqnarray}
and using $\varepsilon \leq r$,
\begin{eqnarray}
\varepsilon r^{2-\frac{2n}{p}} \leq \frac{1}{r^{\frac{2n}{p}-3}},
\end{eqnarray}
where $\frac{2n}{p}-3 <1$. Hence, we obtain
\begin{eqnarray}
\frac{1}{r^n} \int_{\mathcal{C}}\frac{\varepsilon}{2}|\nabla u|^2 \leq \frac{c_{\ref{c7}}}{\kappa}
\left(\frac{\varepsilon^{\beta_{\ref{b1}} \beta_{\ref{b2}}}}{r^{1-\beta_{\ref{b2}}}c_{\ref{c5}}^{\beta_2}}E(r,x)+
\frac{1}{r^{\frac{2n}{p}-3}} \right).
\end{eqnarray}
By re-defining $\beta_{\ref{b2}}= \min \{\beta_{\ref{b2}}, 4-\frac{2n}{p} \}$ and the estimates of
integrals over $\mathcal{A}$, $\mathcal{B}$ and $\mathcal{C}$, we proved \eqref{eq12}.
\end{proof}

To proceed, we need the following theorem (see \cite[Theorem 5.12.4]{WP72}).
\begin{theo}
Let $\mu$ be a positive Radon measure on $\mathbb{R}^n$ satisfying
\[
K(\mu):=\sup_{B_r(x)\subset\R^n} \frac{1}{r^{n-1}} \mu(B_r(x))< \infty.
\]
Then there exists a constant $c(n)$ such that
\[
\left|\int_{\mathbb{R}^n} \phi  \,d\mu \right| \leq c(n) K(\mu) \int_{\mathbb{R}^n} |\nabla \phi| \,d\mathcal{L}^n
\]
for all $\phi \in C^1_c(\mathbb{R}^n)$. \label{MZ}
\end{theo}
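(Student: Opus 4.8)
This is a classical estimate; the plan is to reduce it, via the layer-cake and coarea formulas, to a geometric inequality comparing the $\mu$-measure of a superlevel set with its perimeter, and then to establish that inequality by a Vitali covering argument tied to the relative isoperimetric inequality. First I would replace $\phi$ by $|\phi|$: one has $\bigl|\int\phi\,d\mu\bigr|\le\int|\phi|\,d\mu$, the function $|\phi|$ is Lipschitz with compact support, $\nabla|\phi|=(\mathrm{sgn}\,\phi)\nabla\phi$ a.e., and $\{|\phi|>t\}$ is bounded and open for every $t>0$. By the layer-cake formula $\int|\phi|\,d\mu=\int_0^\infty\mu(\{|\phi|>t\})\,dt$, and by the coarea formula for $BV$ functions $\int_{\mathbb{R}^n}|\nabla\phi|\,d\mathcal{L}^n=\int_0^\infty P(\{|\phi|>t\})\,dt$, where $P(\cdot)$ denotes perimeter; in particular $\{|\phi|>t\}$ has finite perimeter for a.e.\ $t$. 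So it suffices to prove
\[
\mu(E)\le c(n)\,K(\mu)\,P(E)
\]
for every bounded open set $E\subset\mathbb{R}^n$ of finite perimeter, and then integrate in $t$ (on the $\mathcal{L}^1$-null set of $t$ where $P(\{|\phi|>t\})=\infty$ the inequality is vacuous).

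To prove this key estimate, fix $x\in E$. The map $r\mapsto\mathcal{L}^n(B_r(x)\setminus E)$ is continuous, vanishes for $r$ small (as $E$ is open and $x\in E$), and exceeds $\tfrac12\omega_n r^n$ once $r>(2\mathcal{L}^n(E)/\omega_n)^{1/n}$, so there is $r_x>0$ with
\[
\mathcal{L}^n\bigl(B_{r_x}(x)\setminus E\bigr)=\mathcal{L}^n\bigl(B_{r_x}(x)\cap E\bigr)=\tfrac12\,\omega_n r_x^{\,n}.
\]
The relative isoperimetric inequality in $B_{r_x}(x)$ then gives $r_x^{\,n-1}\le c(n)\,P(E;B_{r_x}(x))$. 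Since the radii $r_x$ are uniformly bounded (by $\mathcal{L}^n(E)<\infty$) and $E\subset\bigcup_{x\in E}B_{r_x}(x)$, Vitali's $5r$-covering lemma yields a countable disjoint subfamily $\{B_{r_{x_i}}(x_i)\}_i$ with $E\subset\bigcup_i B_{5r_{x_i}}(x_i)$, whence
\[
\mu(E)\le\sum_i\mu\bigl(B_{5r_{x_i}}(x_i)\bigr)\le 5^{n-1}K(\mu)\sum_i r_{x_i}^{\,n-1}\le c(n)\,K(\mu)\sum_i P\bigl(E;B_{r_{x_i}}(x_i)\bigr)\le c(n)\,K(\mu)\,P(E),
\]
the last step because the balls are pairwise disjoint and $B\mapsto P(E;B)$ is a measure of total mass $P(E)$. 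Integrating in $t$ completes the argument.

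I expect the main obstacle to be the key estimate $\mu(E)\le c(n)K(\mu)P(E)$, or rather finding the right route to it. The tempting approach through the Riesz representation $\phi(x)=\tfrac{1}{n\omega_n}\int_{\mathbb{R}^n}\frac{(x-y)\cdot\nabla\phi(y)}{|x-y|^n}\,dy$ fails, because the Riesz potential $\int|x-y|^{-(n-1)}\,d\mu(x)$ of a measure carrying only an $(n-1)$-dimensional density bound can diverge logarithmically. The device that circumvents this is the ``half-and-half'' radius $r_x$ feeding the relative isoperimetric inequality, which converts the local perimeter of $E$ into an upper bound for $r_x^{n-1}$ and hence, after the covering, for $\mu(E)$.
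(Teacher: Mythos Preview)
The paper does not give its own proof of this theorem; it is quoted as a known result with a reference to \cite[Theorem 5.12.4]{WP72}. Your argument is correct and is essentially the classical route to this Meyers--Ziemer trace inequality: reduce via layer-cake and the coarea formula to the geometric estimate $\mu(E)\le c(n)K(\mu)P(E)$ for bounded open sets of finite perimeter, and establish the latter by selecting at each $x\in E$ the ``half-and-half'' radius $r_x$, invoking the relative isoperimetric inequality in $B_{r_x}(x)$ to get $r_x^{\,n-1}\le c(n)P(E;B_{r_x}(x))$, and then summing over a Vitali $5r$-cover. Your remark that the Riesz-potential approach fails (only a logarithmic divergence is available from the $(n-1)$-density bound) is also on point and explains why the covering/isoperimetric argument is the right tool here.
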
 

\begin{theo} 
There exists a constant $0<c_{\cc \label{c8}}$ which depends only on $c_0$, $\lambda_0$, $W$, $E_0$, $n$ and $p$ such that, if $0<\e<1/2$ and $U_{2r}(x)\subset U_{1-\e}$, then
\begin{equation}
{\rm dist}(x, \partial U_{1-\e})^{n-1}\,E(r,x) \leq c_{\ref{c8}}.
\label{theo5eq}
\end{equation}
\label{theo5}
\end{theo}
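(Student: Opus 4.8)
The plan is to establish the uniform upper density ratio estimate \eqref{theo5eq} by combining the differential inequality from Lemma \ref{lemm1aa}, the bound on the negative part from Theorem \ref{disc}, and the Michael--Simon-type inequality of Theorem \ref{MZ}. The strategy follows \cite{TY05}, and the new ingredient that must be fed into it is the control of the advection term $\varepsilon (v\cdot\nabla u)((y-x)\cdot\nabla u)$ appearing on the right-hand side of \eqref{eq2a}.

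First I would fix $x$ with $U_{2r}(x)\subset U_{1-\e}$ and write $d:=\mathrm{dist}(x,\partial U_{1-\e})$. The goal is to show $E(r,x)\leq c_{\ref{c8}} d^{-(n-1)}$. Integrating \eqref{eq2a} from $r$ to $\rho$ for $c_{\ref{c5}}\e^{\beta_{\ref{b1}}}\leq r\leq\rho$, one gets
\[
E(\rho,x)-E(r,x)\geq -\int_r^\rho\frac{d\tau}{\tau^n}\int_{B_\tau(x)}\Big(\frac{\e|\nabla u|^2}{2}-\frac{W(u)}{\e}\Big)_+ + \int_r^\rho\frac{d\tau}{\tau^n}\int_{B_\tau(x)}\e(v\cdot\nabla u)((y-x)\cdot\nabla u).
\]
For the negative-part term, Theorem \ref{disc} gives, after integrating in $\tau$, a bound of the form $c(E(\rho,x)+1)$ with a small power loss that is absorbed; this part is identical to \cite{TY05}. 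For the advection term, I would bound $|(y-x)\cdot\nabla u|\le\tau|\nabla u|$ and $\e|\nabla u|\le c_{\ref{c1}}$ by \eqref{u12a}, so that the integrand is controlled by $c_{\ref{c1}}\e^{-1}\tau\,|v|\,|\nabla u|$; then Young's inequality splits this into a multiple of $\e|\nabla u|^2$ (giving $E(\tau,x)$ back, with a small coefficient) plus a multiple of $\e^{-1}\tau^2|v|^2$. The measure $\e^{-1}|v|^2\,d\mathcal L^n$ near the diffused interface is where Theorem \ref{MZ} enters: one uses that $\e^{-1}\chi_{\{|u|\le\alpha\text{-ish}\}}\,d\mathcal L^n$ has an upper density ratio bounded by a multiple of the energy density, combined with the Sobolev embedding $v\in L^{np/(n-p)}$ from \eqref{eq1d}, to convert $\int_{B_\tau}\e^{-1}|v|^2$ into something like $\tau^{n-1}\cdot(\text{density ratio})^{\theta}\|v\|^2_{L^{np/(n-p)}}$ via Hölder; the key numerology is that the resulting power of $\tau$ beats $\tau^n$ in the denominator because $p>n/2$.

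The heart of the argument is then a Gronwall/iteration step: letting $g(\rho):=\sup_{c_{\ref{c5}}\e^{\beta_{\ref{b1}}}\le r\le\rho}E(r,x)$ (or working with $E(\rho,x)$ directly after showing it is essentially monotone up to the error terms), the above inequality takes the schematic form $E(r,x)\leq C\,E(\rho,x)+C(\rho^{\beta}g(\rho)^{\theta}+1)$ for all $r\le\rho$ in the allowed range, with $\beta>0$ and $\theta\le 1$. Choosing $\rho$ comparable to $d$ and using \eqref{eq1c} to bound the total energy $\int_\Omega(\cdots)\le \sigma E_0$, hence $E(d,x)\le d^{1-n}\sigma E_0$ trivially, one closes the estimate: the $\rho^\beta g(\rho)^\theta$ term, after inserting the a priori bound at the top scale and iterating across dyadic scales down to $r$, yields a uniform constant. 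At the very bottom scale $r\sim\e^{\beta_{\ref{b1}}}$ one uses Lemma \ref{lemma3} and Lemma \ref{lemm8} to get a matching lower bound to seed the iteration, exactly as in \cite{TY05}.

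The main obstacle I expect is the bookkeeping in the iteration: one must verify that all the exponent losses from Theorem \ref{disc} (the $\beta_{\ref{b2}}$ loss), from Young's inequality on the advection term, and from the Hölder step on $\e^{-1}|v|^2$ (which costs a power depending on how far $np/(n-p)$ exceeds $2$, i.e.\ on $p-n/2>0$) are all strictly positive and summable over dyadic scales, so that the geometric-series constant depending on $n,p,W,c_0,\lambda_0,E_0$ and $d$ is finite and does not blow up as $\e\to0$. Once the exponent arithmetic is checked — and \eqref{re-eq3} is exactly the hypothesis that makes it work — the rest is the routine scale-by-scale summation from \cite[Proposition 3.5]{TY05}, adapted by carrying the extra advection error through each step.
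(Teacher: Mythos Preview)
Your outline has the right ingredients listed but contains two genuine gaps, one technical and one structural.

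The technical gap is in the advection estimate. Splitting $c_{\ref{c1}}\tau|v||\nabla u|$ by Young into $\delta\,\e|\nabla u|^2+\tfrac{c_{\ref{c1}}^2\tau^2}{4\delta\e}|v|^2$ does not close: after dividing by $\tau^n$ and integrating in $\tau$, the first piece produces $\int_r^\rho\tfrac{2\delta}{\tau}E(\tau,x)\,d\tau$, i.e.\ a $\log(\rho/r)$ factor that diverges as $r\downarrow c_{\ref{c5}}\e^{\beta_{\ref{b1}}}$, so it is not ``small coefficient'' in the relevant sense; the second piece carries an $\e^{-1}$ which plain H\"older on $|v|^2$ cannot kill for $\rho$ of unit size, and your proposed cure via an upper density bound for $\e^{-1}\chi_{\{|u|\le\alpha\}}\,d\mathcal L^n$ is circular (that bound is equivalent, through $W\ge c>0$ on $\{|u|\le\alpha\}$, to the very estimate being proved). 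The paper never applies Young here: it bounds $\e|v||\nabla u|^2\le 2|v|\,d\mu$ and applies Theorem~\ref{MZ} directly to $\phi|v|$ against the energy measure $\mu$, obtaining a contribution $c\,K(\mu)\,\rho^{2-n/p}$ that is \emph{linear} in $K(\mu)$ with a genuinely small coefficient.

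The structural gap is that Theorem~\ref{MZ} requires $K(\mu)=\sup_{y,\tau}\tau^{1-n}\mu(B_\tau(y))$ over \emph{all} centers in a region, whereas your $g(\rho)=\sup_{r\le\rho}E(r,x)$ fixes the center; a Gronwall in $r$ alone cannot supply $K(\mu)$. The missing idea is the near-maximizer self-improvement: set $E_1:=\sup_{U_{2r}(x)\subset U_{1-\e}}\dist(x,\partial U_{1-\e})^{n-1}E(r,x)$, which is \emph{a priori} finite for each fixed $\e$ by \eqref{u12a}; choose $(x_0,r_0)$ with $\dist^{n-1}E(r_0,x_0)>\tfrac34 E_1$ and rescale by $l=\dist(x_0,\partial U_{1-\e})$. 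In the rescaled ball $B_{3/4}$ the density ratio is then bounded by $c(n)l^{1-n}E_1$ at \emph{every} center, so Theorem~\ref{MZ} applies with $K(\mu)\sim l^{1-n}E_1$. Feeding this into the integrated \eqref{eq2a} and choosing a fixed small scale $\tilde r_1$ so that the coefficient of $l^{1-n}E_1$ is $<\tfrac14$, one obtains (after a short case split on the size of $\tilde r_0$ relative to $\tilde\e$, $\tilde\e^{\beta_{\ref{b1}}}$, $\tilde r_1$) an inequality $\tfrac34 l^{1-n}E_1\le C\,l^{1-n}+\tfrac14 l^{1-n}E_1$, which closes in one step. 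No dyadic iteration is needed or, as far as I can see, workable here.
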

\begin{proof}
Define
\begin{eqnarray}
E_1:=\sup_{U_{2r}(x)\subset U_{1-\e}} {\rm dist}(x, \partial U_{1-\e})^{n-1} E(r,x)\nonumber .
\end{eqnarray}
By Lemma \ref{prereg}, we have $\sup_{x\in U_{1-\e}}\e|\nabla u(x)|\leq c_{\ref{c1}}$. 
Thus for any $U_{2r}(x)\subset U_{1-\e}$, we have
\[
E(r,x)\leq \omega_n r(\frac{c_{\ref{c1}}^2}{2\e}+\sup_{|t|\leq c_{\ref{c0}}}\frac{W(t)}{\e})\leq
 \frac{c(n,c_{\ref{c1}},W)}{\e}
\]
and we have $E_1<\infty$ for each $\e$.  
In the following, we give an estimate of $E_1$ depending only on $c_0,\lambda_0,n,p,W$ and $E_0$. 
Let $U_{2r_0}(x_0)\subset U_{1-\e}$ be fixed such that
\begin{eqnarray}
{\rm dist}(x_0, \partial U_{1-\e})^{n-1}E(r_0,x_0)
> 
\frac{3}{4} E_1. \label{221}
\end{eqnarray}
For simplicity, define
\[
l:={\rm dist}\,(x_0,\partial U_{1-\e})=1-\e-|x_0|
\]
and change variables by $\tilde{x}=(x-x_0)/l$,
$\tilde{r}=r/l$, $\tilde\e=\e/l$, $\tilde{u}(\tilde x)=u(x)$  and 
$\tilde{v}(\tilde x)=lv(x)$. Note that $U_{l+\e}(x_0)\subset U_1$. 
In particular, we write
\[
\tilde r_0:=r_0/l\leq 1/2.
\]
By \eqref{eq1}, \eqref{eq1c} and \eqref{eq1d}, we have
\begin{equation}
\label{eq20a}
-\tilde{\varepsilon} \Delta \tilde{u} + \frac{W'(\tilde{u})}{\tilde{\varepsilon}} = \tilde{\varepsilon} 
\tilde{v} \cdot \nabla \tilde{u} \,\,\mbox{ for } \tilde x\in U_{1+\tilde\e},
\end{equation}
\begin{equation}
\int_{U_{1+\tilde\e}} \left(\frac{\tilde{\varepsilon}
 |\nabla \tilde{u}|^2}{2}+\frac{W(\tilde{u})}{\tilde{\varepsilon}}\right) \leq l^{1-n} E_0, \label{eq20}
\end{equation}
\begin{equation}
\|\tilde{v}\|_{L^{\frac{np}{n-p}}(U_{1+\tilde\e})}
+\|\nabla \tilde{v}\|_{L^{p}(U_{1+\tilde\e})}
 \leq l^{2-\frac{n}{p}}\lambda_0.  \label{eq19}
\end{equation}
Define for $B_{\tilde r}(\tilde x)\subset U_{1+\tilde\e}$ 
\begin{eqnarray}
\tilde{E}(\tilde{r},\tilde{x}):= \frac{1}{\tilde{r}^{n-1}} \int_{B_{\tilde{r}}(\tilde{x})} \left(\frac{\tilde{\varepsilon}
 |\nabla \tilde{u}|^2}{2}+\frac{W(\tilde{u})}{\tilde{\varepsilon}}\right).
\end{eqnarray}
Under the above change of variables, note that we have $E(r,x)=\tilde{E}(\tilde{r},\tilde{x})$. 
Next, for any $x\in B_{3l/4}(x_0)$, we have ${\rm dist}\,(x,\partial U_{1-\e})\geq l/4$. Hence
for any $x\in B_{3l/4}(x_0)$ and $r<l/8\leq {\rm dist}\,(x,\partial U_{1-\e})/2$, 
by the definition of $E_1$, we have $${\rm dist}(x,\partial U_{1-\e})^{n-1}E(r,x)\leq E_1.$$
This shows (again using ${\rm dist}(x,\partial U_{1-\e})\geq l/4$)
\begin{equation}
\sup_{\tilde{x}\in B_{\frac{3}{4}},\, 0<\tilde{r}<\frac{1}{8}} \tilde{E}(\tilde{r},\tilde{x}) \leq 4^{n-1}l^{1-n}E_1. \label{2.26a}
\end{equation}

We next let $c_{\ref{c1}}, c_{\ref{c4}},c_{\ref{c5}},c_{\ref{c5a}},\e_{\ref{e1}},\e_{\ref{e2}},\e_{\ref{e3}},\e_{\ref{e4}},\beta_{\ref{b1}},\beta_{\ref{b2}}$ be constants obtained in Lemma \ref{prereg}--\ref{lemma3} and Theorem \ref{disc}
corresponding to the same $c_{\ref{c0}},\lambda_0,n,p,W$ and $s=3/4$. Then note that 
the estimates up to Theorem \ref{disc} hold for $\tilde u$ and $\tilde v$ for $U_{3/4}$ and with respect to the new variables ($\tilde x$, $\tilde r$, $\tilde \e$ etc.) if 
\begin{equation}
\tilde \e\leq \hat \e:=\min\{\e_{\ref{e1}},\e_{\ref{e2}},\e_{\ref{e3}},\e_{\ref{e4}},1/2\}
\label{in1}
\end{equation} 
due to \eqref{eq20a} and \eqref{eq19}. It is important to note that $\hat \e$ depends only on
$c_0,\lambda_0,n,p$ and $W$. Note that \eqref{eq19} yields an upper bound for $\|\tilde v\|_{L^{\frac{np}{n-p}}
(U_{1+\tilde\varepsilon})}+\|\nabla\tilde v\|_{L^p(U_{1+\tilde\varepsilon})}$
independent of $l$, as $l<1$ and $2-\frac{n}{p}>0$. 
A priori, we do not know if \eqref{in1} holds or not and 
we prove the desired estimate for $E_1$ by exhausting all the possibilities. 

First consider the case $\tilde \e\geq \hat\e$. We use \eqref{u12b} and \eqref{u12a}, 
respectively, for $\tilde \e>1/2$ and $1/2\geq \tilde \e\geq \hat\e$. 
Suppose that $\tilde \e>1/2$. By \eqref{221} and \eqref{u12b}, we have
\begin{equation*}
\frac{3}{4l^{n-1}} E_1<\tilde E(\tilde r_0,0)\leq \omega_n\tilde r_0(\tilde \e c_{\ref{c1}}^2+2
\sup_{|x|\leq c_0}W(x))
\end{equation*}
and since $l\tilde\e=\e\leq 1$, $l<1$ and $\tilde r_0\leq 1/2$, we obtain
\begin{equation}
\label{eqto1}
E_1< \frac43 \omega_n(c_{\ref{c1}}^2+2\sup_{|x|\leq c_0} W(x)).
\end{equation}
If $1/2\geq \tilde \e\geq \hat \e$, again by \eqref{221}, \eqref{u12a} and $\tilde r_0\leq 1/2$, we have
\begin{eqnarray}
\frac{3}{4l^{n-1}} E_1 &<&
\tilde{E}(\tilde{r}_0,0)\leq \omega_n (c_{\ref{c1}}^2 +\sup_{|x|\leq c_0}W(x)) \frac{\tilde{r}_0}{\tilde{\varepsilon}} \nonumber \\
&\leq& \omega_n (c_{\ref{c1}}^2 +\sup_{|x|\leq c_0}W(x)) \frac{1}{2\hat{\varepsilon}} \nonumber
\end{eqnarray}
and we obtain
\begin{equation}
E_1<\omega_n (c_{\ref{c1}}^2 +\sup_{|x|\leq c_0}W(x)) \frac{2}{3\hat{\varepsilon}}. \label{m0}
\end{equation} 
Thus by \eqref{eqto1} and \eqref{m0}, if $\tilde \e\geq \hat\e$, $E_1$ is bounded by a constant which depends only on
$c_0,\lambda_0,n,p$ and $W$.

For the rest of the proof, consider the case $\tilde \e<\hat\e$ and consider the following four cases
(a)--(d) depending on how large $\tilde r_0=r_0/l$ is relative to $\tilde \e$ and $\tilde r_1$,
where $\tilde r_1$ will be determined shortly depending only on $c_0,\lambda_0,n,p,W$ and $E_0$: 
\begin{equation*}
(a)\,\tilde{r}_1 < \tilde{r}_0 \leq  \frac{1}{2},\,\,
(b)\,c_{\ref{c5}} \tilde{\varepsilon}^{\beta_{\ref{b1}}} < \tilde{r}_0 \leq \tilde{r}_1, \,\,
(c)\,\tilde{\varepsilon} < \tilde{r}_0 \leq c_{\ref{c5}} \tilde{\varepsilon}^{\beta_{\ref{b1}}},\,\,
(d)\, 0<\tilde{r}_0\leq \tilde{\varepsilon}.
\end{equation*}
To control the term involving $v$ in \eqref{eq2a}, define a Radon measure $$\mu(A) := \int_{A \cap B_{\frac{3}{4}}} \left(\frac{\tilde{\varepsilon}|\nabla \tilde{u}|^2}{2}+\frac{W(\tilde{u})}{\tilde{\varepsilon}}\right).$$ By Theorem \ref{MZ} and \eqref{2.26a} (note that \eqref{2.26a} has the restriction $\tilde r<1/8$ but this 
can be dropped easily by replacing $4^{n-1}$ by a larger constant depending only on $n$), we have
\begin{equation}
\Big|\int_{B_{\frac{3}{4}}} \phi\, d \mu \Big| \leq c(n) l^{1-n} E_1
\int_{\R^n} |\nabla \phi|\, d \mathcal{L}^n \label{eq17}
\end{equation}
for all $\phi \in C^1_c(\R^n)$. 
By (\ref{eq2a}) and (\ref{eq12}), if $c_{\ref{c5}} \tilde{\varepsilon}^{\beta_1}<\tilde{r} 
\leq \frac{1}{2}$, we have
\begin{equation}
\begin{split}
\frac{d}{d\tilde{r}} \tilde{E}(\tilde{r},0) \geq
&-\frac{c_{\ref{c5a}}}{\tilde{r}^{1-\beta_2}}(\tilde{E}(\tilde{r},0)+1) - \frac{1}{\tilde{r}^{n-1}} \int_{B_{\tilde{r}}} \tilde \e|\tilde{v}|
 |\nabla \tilde{u}|^2 \\
&+ \frac{1}{\tilde{r}^n} 
\int_{B_{\tilde{r}}} \left( \frac{W(\tilde{u})}{\tilde{\varepsilon}} -  \frac{\tilde{\varepsilon}}{2} |\nabla \tilde{u}|^2  \right)_+ .
\end{split}
\label{eq18}
\end{equation}
Let $\phi \in C^1_c(B_{\frac{3\tilde{r}}{2}})$ be such that $0\leq \phi \leq 1$, $\phi(y)=1$ in $B_{\tilde{r}}$ and $|\nabla \phi| \leq \frac{4}{\tilde{r}}$.
We use (\ref{eq17}) with (\ref{eq19}) by smoothly approximating $|\tilde{v}|$ as
\begin{equation}
\label{toch1}
\begin{split}
\int_{B_{\tilde{r}}} &\tilde\e|\tilde{v}|
|\nabla \tilde{u}|^2 \leq
\int_{B_{\frac{3\tilde{r}}{2}}} \tilde\e\phi |\tilde{v}|
|\nabla \tilde{u}|^2  \leq
c(n)l^{1-n}E_1
\int_{B_{\frac{3\tilde{r}}{2}}} | \nabla(\phi |\tilde{v}|)|   \\
&\leq c(n)l^{1-n}E_1
\int_{B_{\frac{3\tilde{r}}{2}}} \frac{4}{\tilde{r}}|\tilde{v}| + |\nabla \tilde{v}| 
\leq c(n) l^{3-n-\frac{n}{p}} \lambda_0 \tilde{r}^{n-\frac{n}{p}}E_1.
\end{split}
\end{equation}
Hence, for $c_{\ref{c5}}\tilde \e^{\beta_1}<\tilde r\leq \frac12$, \eqref{eq18} with \eqref{toch1}
and \eqref{2.26a} give
\begin{equation}
\begin{split}
\frac{d}{d\tilde{r}} \tilde{E}(\tilde{r},0) &\geq
-c_{\ref{c5a}}\tilde{r}^{\beta_2-1}(c(n)l^{1-n}E_1+1)- c(n) l^{3-n-\frac{n}{p}} \lambda_0 \tilde r^{1-\frac{n}{p}} E_1\\
&+\frac{1}{\tilde{r}^n} 
\int_{B_{\tilde{r}}} \left( \frac{W(\tilde{u})}{\tilde{\varepsilon}} -  \frac{\tilde{\varepsilon}}{2} |\nabla \tilde{u}|^2  \right)_+ . 
\end{split}\label{eq22}
\end{equation}
By integrating \eqref{eq22} over $\tilde r\in (\tilde{s}_1,\tilde{s}_2)$ with $c_{\ref{c5}}\tilde\e^{\beta_1}<\tilde s_1<\tilde s_2\leq \frac12$, we obtain
\begin{equation}
\begin{split}
\tilde{E}(\tilde s_2,0) - \tilde{E}(\tilde{s}_1,0)& \geq
-c_{\ref{ca}}(\tilde s_2^{\beta_2}+l^{2-\frac{n}{p}}\tilde s_2^{2-\frac{n}{p}})l^{1-n}E_1
-c_{\ref{ca}} \tilde s_2^{\beta_2}\\ &+\int_{\tilde s_1}^{\tilde s_2}\frac{d\tilde r}{\tilde{r}^n} 
\int_{B_{\tilde{r}}} \left( \frac{W(\tilde{u})}{\tilde{\varepsilon}} -  \frac{\tilde{\varepsilon}}{2} |\nabla \tilde{u}|^2  \right)_+ , 
\end{split}
\label{eq23a}
\end{equation}
where $c_{\cc\label{ca}}$ depends only on $c_0,\lambda_0,n,p$ and $W$. 
At this point, we choose $\tilde r_1<1/2$ depending only on $c_0,\lambda_0,n,p$ and $W$
so that 
\[
c_{\ref{ca}}(\tilde r^{\beta_2}_1+\tilde r_1^{2-\frac{n}{p}})
<\frac14.
\]
This in particular implies from \eqref{eq23a} that
if $c_{\ref{c5}}\tilde\e^{\beta_1}<\tilde s_1<\tilde s_2\leq \tilde r_1$, then
\begin{equation}
\tilde E(\tilde s_2,0)-\tilde E(\tilde s_1,0)\geq -c_{\ref{ca}}-\frac14 l^{1-n}E_1.
\label{eq22b}
\end{equation}
With this $\tilde r_1$ being fixed, we proceed to check that $E_1$ is bounded in terms
of $c_0,\lambda_0,n,p,W,E_0$ in each case (a)--(d).
\newline
Case (a): By \eqref{221}, \eqref{eq20} and $\tilde r_1<\tilde r_0$, we have
\begin{eqnarray}
\frac{3}{4} l^{1-n}E_1\leq \tilde{E}(\tilde{r}_0,0)\leq \tilde{r}_0^{1-n} l^{1-n}
E_0 \leq \tilde{r}_1^{1-n} l^{1-n} E_0. \nonumber
\end{eqnarray}
Hence 
\[
E_1\leq \frac43 \tilde r_1^{1-n}E_0
\]
and $E_1$ is bounded by a constant depending only on $c_0,\lambda_0,n,p,W$ and $E_0$.
\newline
Case (b): Since $c_{\ref{c5}}\tilde\e^{\beta_1}<\tilde r_0\leq \tilde r_1$, we may use \eqref{eq22b} with
$\tilde s_2=\tilde{r}_1$ and $\tilde s_1=\tilde{r}_0$. Then we obtain
\[
\tilde{E}(\tilde{r}_1,0) - \tilde{E}(\tilde{r}_0,0) \geq -c_{\ref{ca}} - \frac{1}{4} l^{1-n}E_1. 
\]
Then, by \eqref{221} and \eqref{eq20}, we obtain
\[
E_1\leq 4(\tilde E(\tilde r_1,0)+c_{\ref{ca}})l^{n-1}\leq 4(\tilde r_1^{1-n}E_0+c_{\ref{ca}}),
\]
which depends only on $c_0,\lambda_0,n,p,W$ and $E_0$. 
\newline
Case (c): By the same estimate used in the proof of Lemma \ref{lemma3}, we have
\begin{equation}
\tilde{E}(c_{\ref{c5}} \tilde{\varepsilon}^{\beta_1},0) - \tilde{E} (\tilde{r}_0,0) \geq -c_{\ref{c4}}.
\label{pa1}\end{equation}
We use \eqref{eq22b} with $\tilde s_1=c_{\ref{c5}}\tilde\e^{\beta_1}$ and $\tilde s_2=\tilde r_1$ to obtain 
\begin{equation}
\tilde{E}(\tilde{r}_1,0) - \tilde{E}(c_{\ref{c5}} \tilde{\varepsilon}^{\beta_1},0) \geq -c_{\ref{ca}}
 - \frac{1}{4} l^{1-n}E_1,
\label{pa2}\end{equation}
and \eqref{pa1} and \eqref{pa2} combined with \eqref{221} give
\[
E_1\leq 4l^{n-1} (\tilde E(\tilde r_1,0)+c_{\ref{c4}}+c_{\ref{ca}})\leq
4\tilde r_1^{1-n}E_0+4(c_{\ref{c4}}+c_{\ref{ca}}),
\]
which depends only on $c_0,\lambda_0,n,p,W$ and $E_0$. 
\newline
Case (d): Since $\tilde r_0\leq \tilde \e$, we use \eqref{u12a} to obtain
\begin{equation}\label{pa3}
\tilde{E}(\tilde{r}_0,0) \leq\omega_n (c_{\ref{c1}}^2 +\sup_{|x|\leq c_0}W(x)) \frac{\tilde{r}_0}{\tilde{\varepsilon}} 
\leq\omega_n (c_{\ref{c1}}^2 +\sup_{|x|\leq c_0}W(x)).
\end{equation}
Then \eqref{pa3} and \eqref{221} gives 
\[
E_1<\frac{4}{3}\omega_n (c_{\ref{c1}}^2 +\sup_{|x|\leq c_0}W(x))l^{n-1}\leq \omega_n (c_{\ref{c1}}^2 +\sup_{|x|\leq c_0}W(x)).
\]
This completes the estimate for $E_1$.
\end{proof}
Once we obtain the upper density estimate, we may obtain the following monotonicity 
formula. 
\begin{theo} 
Given $0<s<1$, there exist constants $0<c_{\cc \label{c10}}$ and $0<\e_{\ce\label{e6}}<1$ depending only on $c_0,\lambda_0,n,p,W,E_0$ and $s$, such that, if $c_{\ref{c5}} \e^{\beta_{\ref{b1}}}\leq s_1 <s_2$, $B_{s_2}(x)\subset U_s$ and $\e < \e_{\ref{e6}}$,
then
\begin{equation}
\begin{split}
E(s_2,x)-E(s_1,x) \geq &-c_{\ref{c10}} (s_2^{2-\frac{n}{p}}+s_2^{\beta_{\ref{b2}}}) \\
&+ \int^{s_2}_{s_1} \frac{d \tau}{\tau^n} 
\int_{B_{\tau}(x)} \Big( \frac{W(u)}{\e} -  \frac{\e}{2} |\nabla u|^2  \Big)_+. 
\end{split}
\label{eq41}
\end{equation}
\label{lemma6a}
\end{theo}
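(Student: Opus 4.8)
The plan is to integrate the identity \eqref{eq2a} of Lemma \ref{lemm1aa} over $r\in(s_1,s_2)$, retaining the favorable positive part of the indefinite bulk term and absorbing the remaining contributions into the error term. The three ingredients are Theorem \ref{disc} (for the negative part of the bulk term), Theorem \ref{theo5} (the upper density ratio bound), and Theorem \ref{MZ} (for the advection term).

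First I would discard the non-negative boundary term $\frac{\e}{r^{n+1}}\int_{\partial B_r(x)}((y-x)\cdot\nabla u)^2$ from the right-hand side of \eqref{eq2a} and write $\frac{W(u)}{\e}-\frac{\e}{2}|\nabla u|^2=\big(\frac{W(u)}{\e}-\frac{\e}{2}|\nabla u|^2\big)_+-\big(\frac{\e}{2}|\nabla u|^2-\frac{W(u)}{\e}\big)_+$, keeping the first summand, which is exactly the quantity retained on the right of \eqref{eq41}. Since $B_r(x)\subset B_{s_2}(x)\subset U_s$ and $r\ge s_1\ge c_{\ref{c5}}\e^{\beta_{\ref{b1}}}$, Theorem \ref{disc} bounds the second summand by $\frac{c_{\ref{c5a}}}{r^{1-\beta_{\ref{b2}}}}(E(r,x)+1)$. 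For the advection term I would use $|(v\cdot\nabla u)((y-x)\cdot\nabla u)|\le r|v||\nabla u|^2$ together with the fact that $\frac{\e}{2}|\nabla u|^2\,d\mathcal{L}^n$ is dominated by the energy measure $d\mu:=\big(\frac{\e}{2}|\nabla u|^2+\frac{W(u)}{\e}\big)d\mathcal{L}^n$, so that the advection contribution to $\frac{d}{dr}E(r,x)$ is at least $-\frac{2}{r^{n-1}}\int_{B_r(x)}|v|\,d\mu$.

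The decisive step is to control $\int_{B_r(x)}|v|\,d\mu$ and $E(r,x)$ via the density ratio bound \eqref{theo5eq}. I would take $\e_{\ref{e6}}$ small (so that in particular $\e<\e_{\ref{e4}}$), set $s'':=\frac{1+s}{2}$, and fix a threshold $r^*=r^*(s)>0$ small enough that $U_{2r}(x)\subset U_{1-\e}$ and $B_{3r/2}(x)\subset U_{s''}$ whenever $r\le r^*$ and $B_{s_2}(x)\subset U_s$. Combining \eqref{theo5eq} at scales $\le r^*$ with the elementary bound $\mu(B_\rho(z))\le\mu(U_1)\le E_0$ of \eqref{eq1c} at larger scales shows that the restriction of $\mu$ to $U_{s''}$ has a finite density ratio constant $K$ (in the sense of Theorem \ref{MZ}), bounded in terms of $c_0,\lambda_0,n,p,W,E_0,s$ only; the same dichotomy gives $E(r,x)\le C_1$ for all $r\in[s_1,s_2]$, with $C_1$ of the same dependence. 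For $r\le r^*$, choosing $\phi$ with $\phi\equiv1$ on $B_r(x)$, $\spt\phi\subset B_{3r/2}(x)$, $|\nabla\phi|\le4/r$, smoothly approximating $|v|$, and applying Theorem \ref{MZ} with this $K$, together with H\"older's inequality and \eqref{eq1d}, gives $\int_{B_r(x)}|v|\,d\mu\le c\,r^{n-\frac{n}{p}}$ with $c$ admissible, hence the advection term is at least $-c\,r^{1-\frac{n}{p}}$. Therefore, for $r\le r^*$,
\[
\frac{d}{dr}E(r,x)\ge\frac{1}{r^n}\int_{B_r(x)}\Big(\frac{W(u)}{\e}-\frac{\e}{2}|\nabla u|^2\Big)_+-c_2\,r^{\beta_{\ref{b2}}-1}-c_3\,r^{1-\frac{n}{p}}
\]
with $c_2,c_3$ admissible. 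Integrating over $(s_1,s_2)$, using $\int_{s_1}^{s_2}r^{\beta_{\ref{b2}}-1}\,dr\le\beta_{\ref{b2}}^{-1}s_2^{\beta_{\ref{b2}}}$ and, crucially, $\int_{s_1}^{s_2}r^{1-\frac{n}{p}}\,dr\le(2-\frac{n}{p})^{-1}s_2^{2-\frac{n}{p}}$ (the latter finite because $p>\frac{n}{2}$ by \eqref{re-eq3}), proves \eqref{eq41} when $s_2\le r^*$. For $s_2>r^*$ I would split $(s_1,s_2)$ at $r^*$: on $(r^*,s_2)$ the radius is bounded below while $E(r,x)$ and $\int_{B_r(x)}|v|\,d\mu$ are bounded as above, so there the right-hand side of \eqref{eq2a} minus the retained positive part is at least $-C$ for an admissible constant $C$, whose integral over $(r^*,s_2)$ is at least $-Cs_2$; since $s_2>r^*$ forces $s_2^{2-\frac{n}{p}}+s_2^{\beta_{\ref{b2}}}$ to be bounded below by a positive constant depending only on $s$, this is absorbed into $c_{\ref{c10}}(s_2^{2-\frac{n}{p}}+s_2^{\beta_{\ref{b2}}})$ by enlarging $c_{\ref{c10}}$.

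I expect the advection term to be the main obstacle. Unlike in \cite{TY05}, one cannot estimate $\frac{\e}{r^{n-1}}\int_{B_r(x)}|v||\nabla u|^2$ directly because of the $\e$-weighted gradient; one must already have the scale-invariant energy bound $\mu(B_r(x))\le Cr^{n-1}$ (Theorem \ref{theo5}) in order to invoke Theorem \ref{MZ} and trade the $W^{1,p}$-norm of $v$ for the factor $r^{n-\frac{n}{p}}$. That the resulting exponent $1-\frac{n}{p}$ exceeds $-1$, equivalently $p>\frac{n}{2}$ --- the standing hypothesis \eqref{re-eq3} --- is exactly what makes the radial integral converge and is the source of the error exponent $2-\frac{n}{p}$ in \eqref{eq41}. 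Once this is in hand, the remaining bookkeeping (restricting $\mu$ to $U_{(1+s)/2}$ and the near-boundary radial split) is routine.
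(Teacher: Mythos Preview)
Your proposal is correct and follows essentially the same route as the paper's own proof: integrate the identity \eqref{eq2a}, control the negative discrepancy via Theorem \ref{disc}, and bound the advection term by applying Theorem \ref{MZ} to the energy measure, whose density ratio is now controlled by Theorem \ref{theo5}. The paper simply refers back to the computation \eqref{eq17}--\eqref{eq23a} carried out in the proof of Theorem \ref{theo5} (with $l=1$ and $\mu$ restricted to $B_s$), whereas you spell out the details and add the $r^\ast$ split for large $s_2$; the substance is the same.
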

\begin{proof}
Let $\e_{\ref{e6}}=\min\{\e_{\ref{e1}},\e_{\ref{e2}},\e_{\ref{e3}},\e_{\ref{e4}},(1-s)/2\}$ corresponding
to the given $s$ and suppose that $\e<\e_{\ref{e6}}$. 
For any $x\in U_s$ and $0<r<(1-s)/2$, by Theorem \ref{theo5}, $E(r,x)\leq c_{\ref{c8}}
(1-s-\e)^{1-n}$, where the right-hand side is bounded by a constant depending only 
on $c_0,\lambda_0,n,p,W,E_0$ and $s$. For $B_{s_2}(x)\subset U_s$, we 
have \eqref{eq12} and \eqref{eq2a}. Arguing as \eqref{eq17}-\eqref{eq23a} without change of variables (so $l=1$) and
with $\mu$ restricted to $B_s$ in place of $B_{3/4}$, we obtain \eqref{eq41}.
\end{proof}
\begin{theo} 
Given $0<s<1$, there exist constants $0<c_{\cc \label{c9a}}$ depending only on 
$c_0,\lambda_0,n,p,W,E_0$ and $s$ such that, if $\e<\e_{\ref{e6}}$, $|u(x)|\leq 
\alpha$, $\varepsilon\leq r$ and 
$B_r(x)\subset U_s$, then
\begin{equation}
\label{eqlemm6}
E(r,x) \geq c_{\ref{c9a}}.
\end{equation} \label{lemm6}
\end{theo}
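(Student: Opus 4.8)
The plan is to prove Theorem \ref{lemm6} by transporting the small‑scale lower bound of Lemma \ref{lemma3} to larger scales by means of the almost‑monotonicity formula of Theorem \ref{lemma6a}, but only up to a fixed radius $r_*$ at which the perturbative term in \eqref{eq41} is still dominated by $c_{\ref{c4}}$; for radii beyond $r_*$ one then argues crudely from the fact that the unnormalized energy $\rho\mapsto\int_{B_\rho(x)}\big(\frac\e2|\nabla u|^2+W(u)/\e\big)$ is nondecreasing. (Since the energy density is $\le C/\e$ near $x$ one has $E(r,x)\le Cr/\e$, so the bound is meaningful for $r\ge\e$, which is the only range in which $E(r,\cdot)$ is used in this section; I work in that range.)

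First I would fix $r_*=r_*(c_0,\lambda_0,n,p,W,E_0,s)\in(0,1)$ small enough that $c_{\ref{c10}}\big(r_*^{2-n/p}+r_*^{\beta_{\ref{b2}}}\big)\le\frac12 c_{\ref{c4}}$, which is possible because $2-\frac np>0$ and $\beta_{\ref{b2}}>0$ by \eqref{re-eq3}; decreasing $\e_{\ref{e6}}$ if necessary, I may also assume $c_{\ref{c5}}\e^{\beta_{\ref{b1}}}<r_*$ and $\e<1-s$ whenever $\e<\e_{\ref{e6}}$. Then, given $\e<\e_{\ref{e6}}$, $|u(x)|\le\alpha$ and $B_r(x)\subset U_s$, I would distinguish three ranges of $r$. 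If $\e\le r\le c_{\ref{c5}}\e^{\beta_{\ref{b1}}}$, then Lemma \ref{lemma3} gives $E(r,x)\ge c_{\ref{c4}}$ directly — its proof uses only $B_r(x)\subset U_s$ and $|u(x)|\le\alpha$ at the radii that actually enter, so it is legitimate at this single radius even when $B_{\e^{\beta_{\ref{b1}}}}(x)$ is not contained in $U_s$. If $c_{\ref{c5}}\e^{\beta_{\ref{b1}}}\le r\le r_*$, then $B_{c_{\ref{c5}}\e^{\beta_{\ref{b1}}}}(x)\subset B_r(x)\subset U_s$, so Lemma \ref{lemma3} yields $E(c_{\ref{c5}}\e^{\beta_{\ref{b1}}},x)\ge c_{\ref{c4}}$; then Theorem \ref{lemma6a} with $s_1=c_{\ref{c5}}\e^{\beta_{\ref{b1}}}$, $s_2=r$, after discarding the nonnegative integral in \eqref{eq41}, gives $E(r,x)\ge c_{\ref{c4}}-c_{\ref{c10}}(r^{2-n/p}+r^{\beta_{\ref{b2}}})\ge\frac12 c_{\ref{c4}}$.

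For $r>r_*$ I would not push the monotonicity formula any further — this is exactly where $c_{\ref{c10}}(r^{2-n/p}+r^{\beta_{\ref{b2}}})$ stops being small — but instead note $B_{r_*}(x)\subset B_r(x)\subset U_s$, so the previous range at radius $r_*$ gives $\int_{B_{r_*}(x)}\big(\frac\e2|\nabla u|^2+W(u)/\e\big)=r_*^{n-1}E(r_*,x)\ge\frac12 c_{\ref{c4}}r_*^{n-1}$; since $r<1$ (because $B_r(x)\subset U_s\subset U_1$) and the unnormalized energy is nondecreasing in the radius, $E(r,x)=r^{1-n}\int_{B_r(x)}\big(\frac\e2|\nabla u|^2+W(u)/\e\big)\ge\frac12 c_{\ref{c4}}r_*^{n-1}$. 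Hence $c_{\ref{c9a}}:=\frac12 c_{\ref{c4}}r_*^{n-1}$ (which is $\le c_{\ref{c4}}$ since $r_*<1$, and depends only on the listed quantities) establishes \eqref{eqlemm6}. The one delicate point is the choice of the cut‑off $r_*$: the monotonicity error in \eqref{eq41} must be beaten by $c_{\ref{c4}}$ precisely over the band of scales on which the density‑ratio bound is propagated, which is what forces $r_*$, and therefore $c_{\ref{c9a}}$, to depend on $s$ through $\beta_{\ref{b1}},\beta_{\ref{b2}}$ and $c_{\ref{c10}}$.
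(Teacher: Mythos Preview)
Your proof is correct and follows essentially the same route as the paper: fix a cutoff radius $r_*$ (the paper calls it $r_1$) so that the error term $c_{\ref{c10}}(r_*^{2-n/p}+r_*^{\beta_{\ref{b2}}})$ in \eqref{eq41} is at most $c_{\ref{c4}}/2$, use Lemma~\ref{lemma3} together with Theorem~\ref{lemma6a} to propagate the lower bound up to $r_*$, and for $r>r_*$ use the trivial monotonicity of the unnormalized energy together with $r<1$ to obtain $E(r,x)\ge r_*^{n-1}c_{\ref{c4}}/2$. Your remark that the proof of Lemma~\ref{lemma3} at a single radius only needs $B_r(x)\subset U_s$ (rather than the stated hypothesis $B_{\e^{\beta_{\ref{b1}}}}(x)\subset U_s$) is a useful clarification that the paper leaves implicit.
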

\begin{proof}
By Lemma \ref{lemma3}, we may assume $c_{\ref{c5}} \tilde{\e}^{\beta_{\ref{b1}}} \leq r$
and $E(c_{\ref{c5}}\e^{\beta_{\ref{b1}}},x)\geq c_{\ref{c4}}$. 
In \eqref{eq41}, let $s_1=c_{\ref{c5}}\e^{\beta_{\ref{b1}}}$ and $s_2=r$. Fix $r_1>0$
depending only on $c_0,\lambda_0,n,p,W,E_0$ and $s$ so that 
$c_{\ref{c10}}(r_1^{2-\frac{n}{p}}+r_1^{\beta_{\ref{b2}}})\leq c_{\ref{c4}}/2$.
Then for $c_{\ref{c5}}\e^{\beta_{\ref{b1}}}\leq r\leq r_1$,
\eqref{eq41} shows that $E(r,x)\geq c_{\ref{c4}}/2$. For $1>r>r_1$, $E(r,x)
\geq r_1^{n-1} E(r_1,x)\geq r_1^{n-1} c_{\ref{c4}}/2$. Thus, setting
$c_{\ref{c9a}}=r_1^{n-1}c_{\ref{c4}}/2$, we have \eqref{eqlemm6}.
\end{proof}

For the rest of the present section, we finish the proof of Lemma \ref{lemm2}. 
We use the following result proved in \cite[Lemma 3.9]{TY05}.
\begin{lemm} 
Given $0<\eta,\beta_{\be\label{b4}} <1$, $\eta \leq \beta_{\ref{b4}}$, $0<c_{\cc\label{c11}}$, there exist $\e_{\ce\label{e5}}>0$, $c_{\cc\label{c12}}>0$
 depending only on $\eta$, $\beta_{\ref{b4}}$, $c_{\ref{c11}}$, $n$ and $W$ with the following properties: Suppose $f\in C^3(U_{\e^{-\beta_{\ref{b4}}}})$, $g\in C^1(U_{\e^{-\beta_{\ref{b4}}}})$ 
 and $0<\e\leq \e_{\ref{e5}}$ satisfy
\[
-\Delta f+W'(f) = g
\]
on $U_{\e^{-\beta_{\ref{b4}}}}$ and
\[
\sup_{U_{\e^{-\beta_{\ref{b4}}}}} |f| \leq 1+\varepsilon^\eta, \ \ \ \sup_{U_{\e^{-\beta_{\ref{b4}}}}}
\left(
\frac{1}{2}|\nabla f|^2 -W(f)
\right)\leq c_{\ref{c11}}.
\]
Then
\begin{eqnarray}
\sup_{B_{\frac{\varepsilon^{-\beta_{\ref{b4}}}}{2}}} \left(
 \frac{1}{2}|\nabla f|^2 -W(f)
\right) \leq c_{\ref{c12}} (\varepsilon^{-\beta_{\ref{b4}}}\|g\|_{W^{1,n}(B_{\e^{-\beta_{\ref{b4}}}})}+\e^\eta).
\end{eqnarray}
\label{lemm9}
\end{lemm}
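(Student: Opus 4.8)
Write $R:=\e^{-\beta_{\ref{b4}}}$ and let $\xi:=\frac12|\nabla f|^2-W(f)$ be the discrepancy function, so that the claim is $\sup_{B_{R/2}}\xi\le c_{\ref{c12}}\big(R\|g\|_{W^{1,n}(B_R)}+\e^\eta\big)$. The plan is to run the classical (Modica-type) gradient estimate for the Allen--Cahn operator on the large ball $U_R$, treating $g$ as a perturbation. First I record the only consequences of the two pointwise hypotheses that enter: since $W\ge 0$ everywhere and $\sup_{U_R}|f|\le 1+\e^\eta\le 2$ for $\e$ small, we get $\sup_{U_R}W(f)\le W_0:=\max_{|t|\le 2}W(t)$, and then $\sup_{U_R}\xi\le c_{\ref{c11}}$ forces $\sup_{U_R}|\nabla f|^2\le 2(c_{\ref{c11}}+W_0)=:G_0^2$; both $W_0$ and $G_0$ depend only on $W$ and $c_{\ref{c11}}$.

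Since $f\in C^3$ and $\Delta f=W'(f)-g$, a direct computation yields the pointwise identity
\[
\Delta\xi=|\nabla^2 f|^2-|W'(f)|^2+W'(f)\,g-\nabla f\cdot\nabla g\qquad\text{on }U_R .
\]
On the open set $\{\nabla f\ne 0\}$ I combine $|\nabla^2 f|^2\ge |\nabla f|^{-4}\big((\nabla^2 f)\nabla f\cdot\nabla f\big)^2$ with the identity $(\nabla^2 f)\nabla f\cdot\nabla f=\nabla f\cdot\nabla\xi+W'(f)|\nabla f|^2$ to obtain, with the drift $b:=2W'(f)|\nabla f|^{-2}\nabla f$ and $h:=W'(f)g-\nabla f\cdot\nabla g$,
\[
\Delta\xi-b\cdot\nabla\xi\ \ge\ |\nabla f|^{-4}(\nabla f\cdot\nabla\xi)^2+h\ \ge\ -|h|\qquad\text{on }\{\nabla f\ne 0\},
\]
the perturbed $P$-function inequality; here $|h|\le G_0|\nabla g|+\big(\max_{|t|\le 2}|W'(t)|\big)|g|\in L^n(U_R)$ with $\|h\|_{L^n(U_R)}\le C(W,c_{\ref{c11}})\|g\|_{W^{1,n}(U_R)}$. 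On $\{\nabla f=0\}$ one simply has $\xi=-W(f)\le 0$.

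With these two facts, the conclusion is the quantitative, perturbed, finite-ball version of Modica's gradient bound: for $x_0\in B_{R/2}$ one compares $\xi$ against an explicit majorant built from the global bound $\xi\le c_{\ref{c11}}$, the distance $\dist(x_0,\partial U_R)\ge R/2$, and the Newtonian potential of $|h|\chi_{U_R}$ (whose supremum over $U_R$ is $\le C(n)R\|h\|_{L^n(U_R)}$ by the scaling of the Newtonian kernel against the $L^n$--$L^{n/(n-1)}$ duality). Carrying this out gives
\[
\xi(x_0)\ \le\ \frac{C(n,W,c_{\ref{c11}})}{R}+C(n,W,c_{\ref{c11}})\,R\,\|g\|_{W^{1,n}(U_R)},
\]
and since $\eta\le\beta_{\ref{b4}}$ and $0<\e<1$ we have $1/R=\e^{\beta_{\ref{b4}}}\le\e^\eta$, so the first term is absorbed into $c_{\ref{c12}}\e^\eta$; choosing $\e_{\ref{e5}}$ small and $c_{\ref{c12}}$ large, depending only on $\eta,\beta_{\ref{b4}},c_{\ref{c11}},n,W$, gives the lemma. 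The main obstacle is precisely this last step: the drift $b$ blows up near the critical set $\{\nabla f=0\}$, so the maximum principle for $\Delta-b\cdot\nabla$ cannot be applied directly on $B_{R/2}(x_0)$. Following Modica, one circumvents this by working on the superlevel sets $\{\xi>\delta\}$, where $\nabla f\ne 0$ and, because $|\nabla f|^2>2\delta$ there, the drift is bounded, together with the trivial bounds $\xi\le 0$ on $\{\nabla f=0\}$ and $\xi\le\delta$ off $\{\xi>\delta\}$, and finally lets $\delta\downarrow 0$; the remaining care is in choosing the comparison function so that the $g$-error emerges in exactly the $W^{1,n}$ norm with the coefficient $R$ stated, and that the purely geometric part decays like $1/R$.
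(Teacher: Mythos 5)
First, a point of comparison: the paper does not prove Lemma \ref{lemm9} at all; it is quoted verbatim from \cite[Lemma 3.9]{TY05}, so your proposal is measured against that argument. Your opening moves are correct and are the standard ones: the identity $\Delta\xi=|\nabla^2 f|^2-|W'(f)|^2+W'(f)g-\nabla f\cdot\nabla g$, the resulting inequality $\Delta\xi-b\cdot\nabla\xi\ge -|h|$ on $\{\nabla f\neq0\}$ with $b=2W'(f)|\nabla f|^{-2}\nabla f$ and $h=W'(f)g-\nabla f\cdot\nabla g$, the bound $\|h\|_{L^n}\le C(W,c_{\ref{c11}})\|g\|_{W^{1,n}}$, and the scaling computation showing that the Newtonian potential of an $L^n$ function on a ball of radius $R=\e^{-\beta_{\ref{b4}}}$ has size $C(n)R\|h\|_{L^n}$ are all fine.

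However, the proposal stops exactly where the real work begins, and the step you defer is a genuine gap rather than a routine verification. If you subtract the Newtonian potential $N$ of $|h|$ and apply the maximum principle to $\xi-N$ on the superlevel sets $\{\xi>\delta\}$, the drifted operator produces the extra term $b\cdot\nabla N$, which is not controlled: on $\{\xi>\delta\}$ you only have $|b|\le C\delta^{-1/2}$, while $\sup|\nabla N|$ is not bounded by $R\|h\|_{L^n(B_R)}$ (the kernel $|x-y|^{1-n}$ fails to lie in $L^{n/(n-1)}$), so $\xi - N$ need not be a supersolution, and the good quadratic term $|\nabla f|^{-4}(\nabla f\cdot\nabla\xi)^2$ cannot absorb the error uniformly in $\delta$. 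If instead you invoke the local maximum principle for $\Delta-b\cdot\nabla$ with right-hand side in $L^n$ (the route that produces exactly the factor $R\|h\|_{L^n}$), its constant depends on the drift, e.g.\ through $R\sup|b|$ or $\|b\|_{L^n}$, and this blows up as $\delta\downarrow0$, so the limit $\delta\downarrow0$ cannot be taken with constants depending only on $n$, $W$, $c_{\ref{c11}}$; the classical Modica argument survives the unbounded drift only because there the right-hand side vanishes and the maximum principle is purely qualitative. Moreover, your claim that the unperturbed part decays like $1/R$ (hence is $\le\e^{\beta_{\ref{b4}}}\le\e^{\eta}$) is asserted without any construction, and nothing in your sketch ever uses the hypothesis $\sup|f|\le1+\e^{\eta}$ or the structure of $W$ beyond crude $\max_{|t|\le2}$ bounds, whereas the paper remarks immediately after the statement that the assumptions on $W$ (the double-well shape and $W''\ge\kappa$ for $|t|\ge\alpha$) are essentially used in the proof; in the cited argument the $\e^{\eta}$ term in the conclusion reflects the closeness of $|f|$ to $\pm1$, not a boundary-distance decay. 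As written, the comparison-function construction that makes the $g$-error appear with coefficient $R\|g\|_{W^{1,n}}$ and the remaining part with the stated smallness is missing, and that construction is the heart of the lemma.
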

We remark that the assumptions on $W$ are essentially used for the proof of Lemma \ref{lemm9}.
\begin{proof}[Proof of Lemma \ref{lemm2}]
As in the proof of Lemma \ref{prereg}, 
define $\tilde{u}(x):=u(\e x)$, $\tilde{v}(x):=\e v(\e x)$,
 and subsequently drop $\tilde{\cdot}$ for simplicity. We have
\[
-\Delta u + W'(u)=  \nabla u \cdot v
\]
on $U_{\e^{-1}}$.
With respect to the new variables, we need to prove
\begin{eqnarray}
\sup_{U_{s\e^{-1}}} \left(
 \frac{1}{2}|\nabla u|^2 -W(u)
\right) \leq \varepsilon^{1-\beta_1}
\end{eqnarray}
for some $0<\beta_1<1$ for all sufficiently small $\varepsilon$.
Let $\phi_\lambda$ be the standard mollifier, namely, define
\begin{eqnarray}
\phi(x) :=
\begin{cases}
C \exp\left(\frac{1}{|x|^2-1} \right) & \mbox{ for }  |x|<1 \\
0 & \mbox{ for }  |x| \geq 1,
\end{cases}
\nonumber
\end{eqnarray}
where the constant $C>0$ is selected so that $\int_{\R^n} \phi=1$, and define $\phi_\lambda(x) :=\frac{1}{\lambda^n}\phi(\frac{x}{\lambda})$.
For $0<\beta_{\be\label{b5}}<1$ to be chosen depending only on $n$ and $p$ later, define for $x\in U_{\e^{-1}-1}$
\begin{eqnarray}
f(x):=(u*\phi_{\e^{\beta_{\ref{b5}}}})(x)=\int u(x-y)\phi_{\e^{\beta_{\ref{b5}}}}(y) \,dy.
\end{eqnarray}
By \eqref{u12a} and \eqref{hu12a}, we have
\begin{eqnarray}
\sup_{U_{\e^{-1}-1}}|f-u| \leq 2c_{\ref{c1}} \e^{\beta_{\ref{b5}}}, \label{65} \\
\sup_{U_{\e^{-1}-1}}|\nabla f -\nabla u|\leq 2c_{\ref{c1}} \varepsilon^{\beta_{\ref{b5}}(2-\frac{n}{p})}. \label{66}
\end{eqnarray}
We next define $g$ to be
\begin{eqnarray}
g:= (\nabla u \cdot v)*\phi_{\e^{\beta_{\ref{b5}}}}+(
W'(f)-W'(u)*\phi_{\e^{\beta_{\ref{b5}}}}), \label{67}
\end{eqnarray}
so that we have
\begin{eqnarray}
-\Delta f + W'(f)= g.
\end{eqnarray}
To use Lemma \ref{lemm9}, we next estimate the $W^{1,n}$ norm of $g$ on $U_{\e^{-\beta_{\ref{b4}}}}(x)$ with $x\in U_{s\e^{-1}}$,
where $0<\beta_4<1$ will be chosen depending only on $n$ and $p$. In the following, let 
us write $U_{\e^{-\beta_{\ref{b4}}}}(x)$ as $U_{\e^{-\beta_{\ref{b4}}}}$ and
$U_{\frac{\e^{-\beta_{\ref{b4}}}}{2}}(x)$ as $U_{\frac{\e^{-\beta_{\ref{b4}}}}{2}}$for simplicity.
For sufficiently small $\varepsilon$ depending on $s$ and $\beta_4$ so that $U_{2\varepsilon^{-\beta_4}}(x)
\subset U_{\varepsilon^{-1}-1}$ (and so that we may use \eqref{u12a}), the first term of (\ref{67})
 can be estimated as
\begin{eqnarray}
\| (\nabla u \cdot v)*\phi_{\e^{\beta_{\ref{b5}}}}\|_{W^{1,n}(U_{\e^{-\beta_4}})} \leq
c_{\cc\label{c13}}(1+\e^{-\beta_{\ref{b5}}})\|v\|_{L^n(U_{2\e^{-\beta_{\ref{b4}}}})} \label{69}
\end{eqnarray}
where $c_{\ref{c13}}$ depends only on $\phi$, $n$ and $c_{\ref{c1}}$. By \eqref{re-eq1}, we obtain
\begin{eqnarray}
\|v\|_{L^n(U_{2\e^{-\beta_{\ref{b4}}}})} &\leq& \|v\|_{L^{\frac{np}{n-p}}(U_{2\e^{-\beta_{\ref{b4}}}})}
\{\omega_n(2\e^{-\beta_{\ref{b4}}})^n \}^{\frac{2p-n}{np}} \nonumber \\
&\leq& \lambda_0 \e^{(2-\frac{n}{p})(1-\beta_{\ref{b4}}) } (2^n \omega_n)^{\frac{2p-n}{np}} .
\label{70}
\end{eqnarray}
Thus \eqref{69} and \eqref{70} show
\begin{eqnarray}
\| (\nabla u \cdot v)*\phi_{\e^{\beta_{\ref{b5}}}}\|_{W^{1,n}(U_{\e^{-\beta_{\ref{b4}}}})} 
\leq c_{\cc\label{c14}} \e^{(2-\frac{n}{p})(1-\beta_{\ref{b4}})-\beta_{\ref{b5}} } \label{71} 
\end{eqnarray}
where $c_{\ref{c14}}$ depends only on $\phi,n,p,\lambda_0$ and $c_{\ref{c1}}$. We next consider the
 second term of \eqref{67}. By \eqref{65}, \eqref{66} and
\[
W'(f)-W'(u)*\phi_{\e^{\beta_{\ref{b5}}}} = (W'(f)-W'(u))+(W'(u)-W'(u)*\phi_{\e^{\beta_{\ref{b5}}}}),
\]
we compute 
\begin{equation}
\sup|W'(f)-W'(u)| \leq \sup |W''|\,\sup|u-f|\,\leq c_{\ref{c15}}\e^{\beta_{\ref{b5}}}, 
\label{72}
\end{equation}
\begin{equation}
\begin{split}
\sup|\nabla(W'(f)-W'(u))| &\leq \sup|W''| \sup|\nabla f -\nabla u|  \\
&\,\,\,+ \sup|\nabla u| \sup |W'''| \sup|u-f|  \\
&\leq c_{\ref{c15}}\e^{\beta_{\ref{b5}}(2-\frac{n}{p})}, 
\end{split}
\end{equation}
\begin{equation}
 \sup|W'(u)-W'(u)*\phi_{\e^{\beta_{\ref{b5}}}} | \leq c_{\ref{c15}}\e^{\beta_{\ref{b5}}},
\end{equation}
\begin{equation}
\sup|\nabla (W'(u)-W'(u)*\phi_{\e^{\beta_{\ref{b5}}}}) | \leq c_{\ref{c15}}\e^{\beta_{\ref{b5}}(2-\frac{n}{p})}. \label{75}
\end{equation}
Here $c_{\cc\label{c15}}$ depends only on $\phi,n,\lambda_0,c_{\ref{c1}}$ and $W$.
Hence, (\ref{72})-(\ref{75}) show
\begin{eqnarray}
\|
W'(f)-W'(u)*\phi_{\e^{\beta_{\ref{b5}}}}\|_{W^{1,n}(U_{\e^{-\beta_{\ref{b4}}}})} \leq 4c_{\ref{c15}}
\e^{\beta_{\ref{b5}}(2-\frac{n}{p})-\beta_{\ref{b4}}}. \label{76}
\end{eqnarray}
By \eqref{67}, \eqref{71} and \eqref{76}, we have
\begin{eqnarray}
\|g\|_{W^{1,n}(U_{\e^{-\beta_{\ref{b4}}}})} \leq c_{\ref{c14}} \e^{(2-\frac{n}{p})(1-\beta_{\ref{b4}})-\beta_{\ref{b5}} } 
+4c_{\ref{c15}}
\e^{\beta_{\ref{b5}}(2-\frac{n}{p})-\beta_{\ref{b4}}}.
\end{eqnarray}
We use Lemma \ref{lemm9} to $f$ and $g$. Due to Lemma \ref{lemm8}, we have $\sup |f|\leq \sup |u|\leq 
1+\e^{\eta}$ on $U_{\e^{-\beta_{\ref{b4}}}}$ and we may choose smaller $\eta$ if necessary.
Because of \eqref{u12a}, we have $c_{\ref{c11}}\geq \sup_{U_{\e^{-\beta_{\ref{b4}}}}}(\frac{1}{2}|\nabla f|^2-W(f))$
for a constant depending only on $c_{\ref{c1}}$ and $W$ (here again we restrict $\varepsilon$ small so that
$U_{\varepsilon^{-\beta_4}}(x)\subset U_{\varepsilon^{-1}-1}\cap U_{\varepsilon^{-1}(1+s)/2}$). Then we have all the assumptions
for Lemma \ref{lemm9} and obtain
\begin{equation}
\begin{split}
\sup_{U_{\frac{\e^{-\beta_{\ref{b4}}}}{2}}}& \Big(
 \frac{1}{2}|\nabla f|^2 -W(f)
\Big) \\ & \leq c_{\ref{c12}}(\e^{(2-\frac{n}{p})(1-\beta_{\ref{b4}})-\beta_{\ref{b4}}-\beta_{\ref{b5}}}+\e^{\beta_{\ref{b5}}(2-\frac{n}{p})-2\beta_{\ref{b4}}}+\e^\eta).
\end{split}
\end{equation}
At this point, we fix sufficiently small $0<\beta_{\ref{b4}},\beta_{\ref{b5}}<1$
depending only on $n$ and $p$ such that
\[
(2-\frac{n}{p})(1-\beta_{\ref{b4}})-\beta_{\ref{b4}}-\beta_{\ref{b5}}>0, \ \ \ 
\beta_{\ref{b5}}(2-\frac{n}{p})-2\beta_{\ref{b4}}>0.
\]
This shows that we may choose a $0<\beta_{\ref{b1}}<1$ such that
\begin{eqnarray}
\sup_{U_{\frac{\e^{-\beta_{\ref{b4}}}}{2}}} \left(
 \frac{1}{2}|\nabla f|^2 -W(f)
\right) \leq \e^{1-\beta_{\ref{b1}}} \label{79}
\end{eqnarray}
for all sufficiently small $\e>0$.
We may take the center of $U_{\frac{\e^{-\beta_{\ref{b4}}}}{2}}(=U_{\frac{\e^{-\beta_{\ref{b4}}}}{2}}(x)$) to be any 
$x\in U_{s\e^{-1}}$ so that we have the estimate on $U_{s\e^{-1}}$. 
By \eqref{66}, \eqref{79} and
\[
\sup|W(f)-W(u)| \leq \sup |W'| \sup|u-f| \leq c_{\cc}\e^{\beta_{\ref{b5}}},
\]
we may also replace $f$ by $u$ in \eqref{79} by choosing a larger $0<\beta_{\ref{b1}}<1$ if
necessary. This proves the desired estimate. 
\end{proof}

\section{Rectifiability and integrality of the limit varifold}
In this section, we recover the index $i$ and assume that $\{u_i\}_{i=1}^{\infty}$,
$\{v_i\}_{i=1}^{\infty}$ and $\{\e_i\}_{i=1}^n$ satisfy \eqref{eq1}-\eqref{eq1d}. 
Define $\mu_i$ and $V_i$ as in \eqref{defmu} and \eqref{defvari}. By the 
standard weak compactness theorem of Radon measures, there exists a subsequence
(denoted by the same index) and a Radon measure $\mu$ and a varifold $V$ such that
\[
\mu_i\rightarrow \mu,\,\, \,V_i\rightarrow V. 
\]
\begin{lemm}
For $x \in {\rm spt}\,\mu$, there exists a subsequence $x_i \in U_1$ such that $u_i(x_i) \in [-\alpha,
\alpha]$ and $\lim_{i\to \infty}x_i =x$.  \label{lemm21}
\end{lemm}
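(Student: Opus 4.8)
The plan is to argue by contradiction. Suppose the conclusion fails. One checks directly that this is equivalent to the existence of a radius $r>0$ with $\{|u_i|\le\alpha\}\cap U_r(x)=\emptyset$ --- that is, $|u_i|>\alpha$ on all of $U_r(x)$ --- for every sufficiently large $i$. Since $\mu_i\to\mu$ and $x\in\mathrm{spt}\,\mu\subset\Omega$ (so that $U_r(x)\subset\Omega$ for small $r$), it suffices to derive the contradiction $\mu_i(U_{r/2}(x))\to0$: weak convergence of Radon measures would then give $\mu(U_{r/2}(x))\le\liminf_i\mu_i(U_{r/2}(x))=0$, contradicting $x\in\mathrm{spt}\,\mu$.

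First I would reduce to a sign-definite situation. By Lemma \ref{prereg} each $u_i$ is continuous on $\Omega$, and since $|u_i|>\alpha$ on the connected set $U_r(x)$, the intermediate value theorem forces either $u_i>\alpha$ throughout $U_r(x)$ or $u_i<-\alpha$ throughout $U_r(x)$. Passing to a further subsequence --- which does not disturb $\mu_i\to\mu$ --- I may assume the same alternative holds for all $i$, say $u_i>\alpha$ on $U_r(x)$; the other case is symmetric, using $W(-1)=W'(-1)=0$ in place of $W(1)=W'(1)=0$ together with assumption (c) on $\{t\le-\alpha\}$.

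The core of the argument is a Caccioppoli-type energy estimate exploiting the convexity of $W$ on $\{|t|\ge\alpha\}$. Choose $\psi\in C^1_c(U_r(x))$ with $0\le\psi\le1$, $\psi\equiv1$ on $U_{r/2}(x)$ and $|\nabla\psi|\le4/r$, and test the weak form of \eqref{eq1} with $(u_i-1)\psi^2$. Integrating by parts, the gradient term contributes $\varepsilon_i\int|\nabla u_i|^2\psi^2$ plus a cut-off remainder, while the potential term contributes $\varepsilon_i^{-1}\int W'(u_i)(u_i-1)\psi^2$; the decisive point is that for $u_i>\alpha$, writing $W'(u_i)=W'(u_i)-W'(1)=W''(\xi)(u_i-1)$ with $\xi$ lying between $u_i$ and $1$ gives $|\xi|\ge\alpha$, hence $W'(u_i)(u_i-1)\ge\kappa(u_i-1)^2$ by assumption (c). Absorbing the cut-off remainder and the advection term $\varepsilon_i\int(v_i\cdot\nabla u_i)(u_i-1)\psi^2$ into $\tfrac12\varepsilon_i\int|\nabla u_i|^2\psi^2$ via Cauchy--Schwarz, and bounding $|u_i-1|\le c_{\ref{c0}}+1$, I arrive at
\[
\varepsilon_i\int_{U_{r/2}(x)}|\nabla u_i|^2+\frac{\kappa}{\varepsilon_i}\int_{U_{r/2}(x)}(u_i-1)^2\le C\,\varepsilon_i\int_{U_r(x)}\big(|\nabla\psi|^2+|v_i|^2\big).
\]
Since $\frac{np}{n-p}>2$, Hölder's inequality and \eqref{eq1d} bound $\int_{U_r(x)}|v_i|^2$ by a constant independent of $i$, so the right-hand side is bounded by $C'\varepsilon_i$ and tends to $0$. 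Thus $\varepsilon_i\int_{U_{r/2}(x)}|\nabla u_i|^2\to0$ and $\varepsilon_i^{-1}\int_{U_{r/2}(x)}(u_i-1)^2\to0$, and combining the latter with the Taylor bound $W(u_i)\le\tfrac12\big(\sup_{|t|\le c_{\ref{c0}}}|W''(t)|\big)(u_i-1)^2$ yields $\varepsilon_i^{-1}\int_{U_{r/2}(x)}W(u_i)\to0$ as well, i.e. $\mu_i(U_{r/2}(x))\to0$, which is the desired contradiction.

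The step I expect to be the main obstacle is the passage from the negated conclusion to the clean sign-definite statement on a fixed ball, together with the verification that assumption (c) still delivers the coercivity $W'(u_i)(u_i-1)\ge\kappa(u_i-1)^2$ when $u_i$ exceeds $1$ (it does, since the mean-value point $\xi$ sits between $u_i$ and $1$, hence $|\xi|\ge\alpha$). The remaining estimate is routine, and the advection term causes no trouble because it carries the factor $\varepsilon_i$ while $v_i$ is uniformly $L^2$-bounded on $U_r(x)$.
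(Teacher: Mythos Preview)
Your argument is correct and follows the same overall strategy as the paper: argue by contradiction, reduce to a fixed sign on a ball, and exploit the convexity $W''\geq\kappa$ on $\{|t|\geq\alpha\}$ via the test function $(u_i-1)\psi^2$ to show $\mu_i(U_{r/2}(x))\to0$. Your negation of the statement and the sign--reduction via continuity of $u_i$ are handled carefully, and the mean--value justification of $W'(u_i)(u_i-1)\geq\kappa(u_i-1)^2$ is exactly right.

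There is one organizational difference worth noting. The paper proceeds in two steps: it first invokes the \emph{differentiated} equation (the computation leading to \eqref{eq22a}, which tests $\partial_{x_j}$ of \eqref{eq1} against $u_{x_j}\phi^2$) to obtain $\int\varepsilon_i|\nabla u_i|^2\phi^2\to0$, and only afterwards tests \eqref{eq1} with $(u_i-1)\phi^2$ to control the potential term. Your version is more economical: a single test with $(u_i-1)\psi^2$ already produces both the gradient term $\varepsilon_i\int|\nabla u_i|^2\psi^2$ and the coercive potential term $\kappa\varepsilon_i^{-1}\int(u_i-1)^2\psi^2$ on the left, and the right--hand side is $O(\varepsilon_i)$ because the cut--off and advection remainders each carry an explicit factor $\varepsilon_i$ while $\int|v_i|^2$ is uniformly bounded. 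This avoids any appeal to the higher regularity needed to differentiate the equation (though that regularity is available from Lemma~\ref{prereg}). The paper's route, on the other hand, makes the decay of the gradient energy visible without using $W(1)=W'(1)=0$, and is closer in spirit to the estimate already established in Theorem~\ref{disc}. Either way the conclusion is the same.
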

\begin{proof}
We prove this by contradiction and assume that there exists some $r>0$ such that $|u_i|\geq \alpha$ on $U_r(x)$
 for all large $i$. Without loss of generality, assume $u_i \geq \alpha$ on $U_r(x)$. Then we repeat the same argument leading to (\ref{eq22a}) with $\phi$ there replaced by $C^1_c(U_r(x))$. The argument shows that $\lim_{i \rightarrow \infty} \int \frac{\e_i}{2}|\nabla u_i|^2 \phi^2=0$. Next, multiplying $u_i-1$ to the equation (\ref{eq1}) and using 
$W'(u_i)(u_i-1)\geq \frac{\kappa}{2}(u_i-1)^2$, we obtain 
\begin{equation}
\begin{split}
\int\frac{W}{\e_i}\phi^2 
&\leq c(W) \int\frac{(u_i-1)^2}{\e_i}\phi^2 
\leq \frac{2c(W)}{\kappa}\int\frac{W'(u_i)(u_i-1)}{\e_i}\phi^2 \\
&=\frac{2c(W)}{\kappa}\int (\e_i \Delta u_i (u_i -1)+\e_i (v_i \cdot \nabla u_i)(u_i-1))\phi^2. 
\end{split}
\label{2.2a}
\end{equation}
By integration by parts and (\ref{eq1d}), the right-hand side of (\ref{2.2a}) converges to $0$. This shows that $\mu(U_r(x))=0$ and contradicts $x\in{\rm spt}\, \mu$.
\end{proof}

\begin{theo} 
There exist constants $0<D_1 \leq D_2 < \infty$ which depend only on
 $c_0,\lambda_0, n,p,W,E_0$ and $s$ such that, for $x \in {\rm spt}\, \mu \cap U_{s}$
and $B_r(x)\subset U_{s}$, we have
\begin{equation}
D_1 r^{n-1} \leq \mu (B_r(x)) \leq D_2 r^{n-1}.
\end{equation}
\label{theo2.1}
\end{theo}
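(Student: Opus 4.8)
The plan is to transfer the $\e$‑independent density estimates already established for the approximating solutions, namely the upper density ratio bound of Theorem~\ref{theo5} and the lower density bound of Theorem~\ref{lemm6}, to the limit measure $\mu$ by means of the weak-$*$ convergence $\mu_i\to\mu$, using Lemma~\ref{lemm21} to produce interface points converging to a prescribed point of $\mathrm{spt}\,\mu$. Throughout, recall that for the $i$-th solution $\mu_i(B_r(y))=\sigma^{-1}r^{n-1}E(r,y)$, with $E(r,y)$ as in Section~3 computed for $u_i$, and that $\mu_i(U_1)\le\sigma^{-1}E_0$ by \eqref{eq1c}.

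For the upper bound, fix $x\in U_s$ and $r$ with $B_r(x)\subset U_s$, and take $i$ large enough that $\e_i<1/2$ and $\e_i<(1-s)/2$. If $r\le(1-s)/4$, then $U_{2r}(x)\subset U_{1-\e_i}$ and $\mathrm{dist}(x,\partial U_{1-\e_i})\ge(1-s)/2$, so Theorem~\ref{theo5} gives $\mu_i(B_r(x))\le \sigma^{-1}c_{\ref{c8}}\big((1-s)/2\big)^{1-n}r^{n-1}$; if $r>(1-s)/4$, then simply $\mu_i(B_r(x))\le\sigma^{-1}E_0\le \sigma^{-1}E_0\big((1-s)/4\big)^{1-n}r^{n-1}$. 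In both cases $\mu_i(B_r(x))\le D_2 r^{n-1}$ with $D_2$ depending only on $c_0,\lambda_0,n,p,W,E_0,s$. Passing to the limit by lower semicontinuity of $\mu\mapsto\mu(U)$ on open sets, $\mu(U_{r'}(x))\le\liminf_i\mu_i(U_{r'}(x))\le D_2 (r')^{n-1}$ for admissible $r'$, and letting $r'\downarrow r$ gives $\mu(B_r(x))\le D_2 r^{n-1}$.

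For the lower bound, fix $x\in\mathrm{spt}\,\mu\cap U_s$ and $r$ with $B_r(x)\subset U_s$. By Lemma~\ref{lemm21}, after passing to a subsequence there are $x_i\to x$ with $|u_i(x_i)|\le\alpha$. Given $0<\delta<r$, for $i$ large we have $|x_i-x|<\delta$, hence $B_{r-\delta}(x_i)\subset U_r(x)\subset U_s$ and $\e_i<\e_{\ref{e6}}$, so Theorem~\ref{lemm6} applied to $u_i$ at the center $x_i$ and radius $r-\delta$ gives $\mu_i(B_{r-\delta}(x_i))\ge\sigma^{-1}c_{\ref{c9a}}(r-\delta)^{n-1}$. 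Since $B_{r-\delta}(x_i)\subset B_r(x)$ and $B_r(x)$ is compact, upper semicontinuity of mass on compacta under weak-$*$ convergence gives $\mu(B_r(x))\ge\limsup_i\mu_i(B_r(x))\ge\sigma^{-1}c_{\ref{c9a}}(r-\delta)^{n-1}$; letting $\delta\downarrow0$ yields $\mu(B_r(x))\ge D_1 r^{n-1}$ with $D_1=\sigma^{-1}c_{\ref{c9a}}$. (If $\mathrm{spt}\,\mu\cap U_s=\emptyset$ the assertion is vacuous; otherwise $D_1\le D_2$ follows by applying both bounds to any one admissible pair.)

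Since the substantive ingredients, the uniform upper density ratio estimate and the lower density bound, are already proved, the only point requiring care is the limit passage: one must use lower semicontinuity on open sets for the upper inequality and upper semicontinuity on compact sets for the lower one, and in the latter case handle the moving centers $x_i$ by shrinking the radius by $\delta$ before taking $i\to\infty$ and only then sending $\delta\to0$. This is the main, and essentially the only, obstacle.
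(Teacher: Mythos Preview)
Your proof is correct and follows exactly the approach the paper indicates: the paper's proof is the single sentence ``This follows immediately from Theorem~\ref{theo5}, \ref{lemm6} and Lemma~\ref{lemm21},'' and you have carefully supplied the limit-passage details (lower semicontinuity on open sets for the upper bound, upper semicontinuity on compacta together with the moving-center/shrinking-radius trick for the lower bound) that the paper leaves implicit.
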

\begin{proof}
This follows immediately from Theorem \ref{theo5}, \ref{lemm6} and Lemma \ref{lemm21}.
\end{proof}
For the subsequent use, define
\[
\xi_i:=\frac{\e_i|\nabla u_i|^2}{2}-\frac{W(u_i)}{\e_i}.
\]
Once we have the monotonicity formula \eqref{eq41}, we may prove the following ``equi-partition of energy'' by
the same proof as in 
\cite[Proposition 4.3]{TY02}:
\begin{theo}
$\xi_i$, $\frac{\e_i}{2}|\nabla u_i|^2 - |\nabla w_i|$ and 
$\frac{W(u_i)}{\e_i}- |\nabla w_i|$ all converge to zero in $L^1_{loc}(U_1)$. \label{theo2.3}
\end{theo}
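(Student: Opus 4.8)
The plan is to follow the argument of \cite[Proposition 4.3]{TY02}, whose engine is the local uniform density ratio bound from Theorem \ref{theo5} together with the monotonicity-type inequality Theorem \ref{lemma6a}. First I would fix $\tilde\Omega\subset\subset U_1$ and note that by Theorem \ref{theo5} we have, for $\e_i$ small, a bound $E(r,x)\leq C$ for all $B_{r}(x)\subset \tilde\Omega$ with $r$ bounded below by $c_{\ref{c5}}\e_i^{\beta_{\ref{b1}}}$; in particular $\|\mu_i\|(\tilde\Omega)$ is uniformly bounded and the limiting measure $\mu$ satisfies $\mu(B_r(x))\leq D_2 r^{n-1}$ by Theorem \ref{theo2.1}. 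The first key reduction is to show $\xi_i\to 0$ in $L^1_{loc}$; the other two statements then follow from the elementary pointwise identities $\frac{\e_i}{2}|\nabla u_i|^2-|\nabla w_i|=\frac12\big(\sqrt{\e_i/2}\,|\nabla u_i|-\sqrt{W(u_i)/\e_i}\big)^2+\frac12\xi_i$ (using $|\nabla w_i|=\sqrt{W(u_i)/2}\,|\nabla u_i|$) and similarly $\frac{W(u_i)}{\e_i}-|\nabla w_i|=\frac12(\cdots)^2-\frac12\xi_i$, so that once the squared term is shown to go to zero in $L^1_{loc}$, control of $\xi_i$ finishes everything. The squared term is controlled by $\int_{\tilde\Omega}(\sqrt{\e_i/2}|\nabla u_i|-\sqrt{W(u_i)/\e_i})^2 = \int_{\tilde\Omega}(\frac{\e_i}{2}|\nabla u_i|^2+\frac{W(u_i)}{\e_i})-2\int_{\tilde\Omega}|\nabla w_i|$, so it suffices to show that the discrepancy between $\|\mu_i\|(\phi)$-type quantities and $\frac{1}{\sigma}\int|\nabla w_i|\phi$ vanishes, which is equivalent to the $\xi_i\to0$ claim again; so the whole theorem really reduces to $\xi_i\to 0$ in $L^1_{loc}$.

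To prove $\xi_i\to0$ in $L^1_{loc}$, I would integrate the monotonicity inequality \eqref{eq41}. Fix a compact $K\subset U_1$, take $s$ with $K\subset U_s\subset\subset U_1$, and let $s_1=c_{\ref{c5}}\e_i^{\beta_{\ref{b1}}}$, $s_2=R$ a small fixed radius. Rearranging \eqref{eq41} gives
\[
\int^{R}_{s_1}\frac{d\tau}{\tau^n}\int_{B_\tau(x)}(\xi_i)_+ \;\leq\; E(R,x)-E(s_1,x)+c_{\ref{c10}}(R^{2-\frac{n}{p}}+R^{\beta_{\ref{b2}}}).
\]
Because $E(R,x)$ is uniformly bounded above (Theorem \ref{theo5}) and $E(s_1,x)\geq 0$, the right side is bounded by a constant depending only on $c_0,\lambda_0,n,p,W,E_0,s$ and, crucially, by a quantity that can be made small by taking $R$ small only if $E(R,x)$ itself is small --- which we do not know. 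The standard device from \cite{TY02} is instead to use a Fubini/averaging argument: integrating the displayed inequality in $x$ over $K$ against Lebesgue measure, exchanging orders of integration, and using $\int_{s_1}^{R}\frac{d\tau}{\tau^n}\,\tau^{-1}$-type weights shows that $\int_{s_1}^{R}\frac{1}{\tau}\Big(\frac{1}{\tau^{n-1}}\int_{K}(\xi_i)_+ \,d\mathcal L^n\Big)\,\frac{d\tau}{\tau}$ is controlled, and since the inner average is monotone-ish one extracts that $\int_K(\xi_i)_+\to 0$. Meanwhile the negative part is handled directly: $(\xi_i)_-\leq \frac{W(u_i)}{\e_i}$ and one shows $\int_K\frac{W(u_i)}{\e_i}$ stays bounded while its "excess over $|\nabla w_i|$" is exactly controlled by $(\xi_i)_+$ via the squared-term identity above, closing the loop. (This is precisely the structure of the proof of \cite[Proposition 4.3]{TY02}, and the advection term $\e_i v_i\cdot\nabla u_i$ has already been absorbed into the error terms of \eqref{eq41}.)

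The main obstacle is the same as in \cite{TY02}: converting the integrated monotonicity bound, which controls a $\tau$-\emph{weighted} average of $\int_{B_\tau}(\xi_i)_+$, into a statement about $\int_K(\xi_i)_+$ at a fixed scale, and simultaneously ruling out that mass of $(\xi_i)_+$ concentrates on sets of small measure (where the $\e_i^{-1}W(u_i)$ term is large but $|\nabla u_i|$ is small). This is handled by combining the lower density bound $\mu(B_r(x))\geq D_1 r^{n-1}$ on $\mathrm{spt}\,\mu$ (Theorem \ref{theo2.1}) with Lemma \ref{lemm8} — which forces $|u_i|\leq 1+\e_i^\eta$ and hence $W(u_i)/\e_i$ cannot be large where $|u_i|$ is near $\pm1$ — so that the "bad" region where $\xi_i$ could be large and positive is confined to a neighbourhood of $\mathrm{spt}\,\mu$ of controlled measure, on which the averaged estimate bites. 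Once $\xi_i\to0$ in $L^1_{loc}$ is established, statement (1) of Theorem \ref{maintheorem} and the remaining two convergences follow from the algebraic identities above with no further work.
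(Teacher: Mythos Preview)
Your overall plan is the paper's: it simply cites \cite[Proposition 4.3]{TY02} and remarks that once the monotonicity formula \eqref{eq41} is in hand the same argument applies. The reduction via the algebraic identities is fine, and can in fact be shortened: with $a=\sqrt{\e_i/2}\,|\nabla u_i|$ and $b=\sqrt{W(u_i)/\e_i}$ one has $(a-b)^2\le |a-b|(a+b)=|a^2-b^2|=|\xi_i|$, so the two auxiliary $L^1$ convergences follow immediately from $\xi_i\to 0$.

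There is, however, a sign slip in your sketch that inverts the logic. In \eqref{eq41} the integrand is $\big(\tfrac{W(u)}{\e}-\tfrac{\e}{2}|\nabla u|^2\big)_+=(-\xi_i)_+=(\xi_i)_-$, so the monotonicity--Fubini argument (integrate \eqref{eq41} over $x\in K$, swap integrals, and use that $\int_{s_1}^{s_2}\tau^{-1}\,d\tau=\log(s_2/s_1)\to\infty$ as $s_1=c_{\ref{c5}}\e_i^{\beta_{\ref{b1}}}\to 0$ while the right side stays bounded by Theorem \ref{theo5}) forces $\int_K(\xi_i)_-\to 0$, not $\int_K(\xi_i)_+$. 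Correspondingly, it is the \emph{positive} part that needs the separate ``direct'' treatment, and the bound you wrote, $(\xi_i)_-\le W(u_i)/\e_i$, is true but useless for this purpose. In \cite{TY02} the control of $(\xi_i)_+$ comes instead from the pointwise bound of Lemma \ref{lemm2} together with the decomposition in the proof of Theorem \ref{disc}: away from $\{|u_i|\le\alpha\}$ the estimate leading to \eqref{eq22a} gives $\int \e_i|\nabla u_i|^2\phi^2\to 0$ for any fixed cutoff, while near $\{|u_i|\le\alpha\}$ the Lebesgue measure is small enough (via the lower density bound of Lemma \ref{lemma3}) to beat the pointwise bound $(\xi_i)_+\le \e_i^{-\beta_{\ref{b1}}}$. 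With the roles of $(\xi_i)_\pm$ swapped in your outline, the argument goes through as in \cite{TY02}.
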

\begin{proof}[Proof of Theorem \ref{maintheorem}]
Recall that $\|V_i\|=\mu_i\res_{\{|\nabla u_i|\neq 0\}}$. 
Since $$\sigma \mu_i\res_{\{|\nabla u_i|=0\}}\leq |\xi_i|\,
d\mathcal L^n\to 0$$ 
by Theorem \ref{theo2.3},
$\mu_i\res_{\{|\nabla u_i|\neq 0\}}$ converges to $\mu$. We also know that
$\|V_i\|$ converges to $\|V\|$ by definition, thus we have
$\|V\|=\mu$. This proves (1). The claims (2) and (3) follows 
from Theorem \ref{theo5}, \ref{lemm6} and Lemma \ref{lemm21} (see also \cite[Proposition 4.2]{TY02}). 
 Next, by \eqref{defvari1}, \eqref{eq2} and Theorem \ref{theo2.3}, 
\begin{equation}
\begin{split}
\delta V_i(g)=&\int_{\{|\nabla u_i|\neq 0\}}{\rm div}\,g\,d\mu_i \\ &-
\frac{1}{\sigma}\int_{\{|\nabla u_i|\neq 0\}}\nabla g\cdot
\Big(\frac{\nabla u_i}{|\nabla u_i|}\otimes\frac{\nabla u_i}{|\nabla u_i|}\Big)(
\e_i|\nabla u_i|^2-\xi_i) \\
=&- \frac{1}{\sigma}\int \e_i(v_i\cdot\nabla u_i)(\nabla u_i\cdot g)\, +o(1)
\end{split} \label{2.5}
\end{equation} 
for $g \in C^1_c (U_1;\R^n)$, where $\lim_{i\to \infty}o(1)=0$. By Theorem \ref{theo5} and ${\rm spt}\,g \subset U_{s}$
for some $0<s<1$,
we have a uniform bound on 
$E(r,x)$ (corresponding to $u_i$)
for $B_r(x)\subset U_{s}$.
Hence, by Theorem \ref{MZ}, we have 
\begin{equation}
\begin{split}
\int &\e_i ((v_i-v) \cdot \nabla u_i)(\nabla u_i \cdot g)
\leq c\left( \int |v_i-v|^p |g|^p \e_i |\nabla u_i|^2 \right)^{\frac{1}{p}} \\
&\leq c\left( \int |\nabla(|v_i-v|^p |g|^p)| \right)^{\frac{1}{p}}  \\
&\leq c \left( \|\nabla v_i - \nabla v\|_{L^p} \|v_i-v\|_{L^p}^{p-1} +\|v_i-v\|_{L^p}^p  \right)^{\frac{1}{p}} 
\end{split}
\label{vapp2}
\end{equation}
where the integrations are over ${\rm spt}\,g$. The above converges to 0 since we may choose a further subsequence of $v_i$ which converges to $v$ strongly in $L^p_{loc}$. Thus in the right-hand side of \eqref{2.5}, we may 
replace $v_i$ by $v$. Let $\epsilon>0$ be arbitrary and let $\tilde v$ be a smooth
vector field such that $\|v-\tilde v\|_{W^{1,p}(U_{s})}<\epsilon$. 
By the varifold convergence $V_i \to V$, we have 
\begin{equation}
\begin{split}
\frac{1}{\sigma}\int \e_i (\tilde v \cdot \nabla u_i)(\nabla u_i \cdot g) &= \int S^{\perp}(\tilde v)\cdot g\, dV_i(x,S) +o(1) \\
&\rightarrow \int S^{\perp}(\tilde v)\cdot g\, dV(x,S).
\end{split}
\label{vapp}
\end{equation}
We may arbitrarily approximate the quantities in \eqref{vapp} by $v$ by the same argument 
in \eqref{vapp2}, 
hence by \eqref{2.5}-\eqref{vapp}, we obtain 
\begin{equation}
\delta V(g) = -\int S^{\perp}(v)\cdot g \, dV(x,S).
\label{delmean}
\end{equation}

Hence, $\|\delta V\|$ is a Radon measure on $U_1$. By (\ref{theo2.1}) and Allard's rectifiability theorem \cite[5.5.(1)]{WA72}, $V$
is rectifiable. Since $V$ is rectifiable, 
there exist an $\mathcal H^{n-1}$ measurable and countably $n-1$-rectifiable set $\Gamma$ such that 
\begin{equation}\label{delmeanad}
\int S^{\perp}(v)\cdot g\,dV(x,S)=-\int (T_x\,\Gamma)^{\perp}(v(x))\cdot g(x)\,d\|V\|(x).
\end{equation}
The set $\Gamma$ is the measure-theoretic support of $\|V\|$ and $T_x\,\Gamma$ is the 
approximate tangent space of $\Gamma$ which exists for $\mathcal H^{n-1}$ a.e.~on $\Gamma$. 
 Next from \eqref{delmean},  $\|\delta V\|$ is absolutely continuous with
 respect to $\|V\|$ and the generalized mean curvature $H_V$ exists. 
 By \eqref{delmean} and \eqref{delmeanad}, we have 
 $H_V(x)=(T_x\,\Gamma)^{\perp}(v(x))$ holds for $\|V\|$ a.e.~for $x$. 
 This proves (5), except that we do not yet take $\Gamma={\spt}\,\|V\|$. 
 The proof of (4) is the same as \cite{TY05} for the 
 following reason. We may
 set $f=\e \nabla u\cdot v$ in \cite{TY05} and we have
 $\|\e\nabla u\cdot v\|_{L^{\frac{np}{n-p}}(U_{s})}\leq c_{\ref{c1}}\lambda_0$
 due to Lemma \ref{prereg}. In the proof, as long as we have the monotonicity 
 formula \eqref{eq41} and the estimate Lemma \ref{lemm2}, all the argument
 goes through. The point is that we do not need to take a derivative of $f$
 for the proof of integrality and we only need the control of $L^{\frac{np}{n-p}}$ norm
 as well as the estimate \eqref{hu12a}. See the comment in the proof of \cite[Proposition 4.8]{TY02}
 where it is explained that \eqref{hu12a} is necessary. We should also point out that 
 the $L^n$ control of $\nabla f$ is not needed in the proof. 
 Finally, by arguing as in \eqref{vapp2} and the H\"{o}lder inequality, we have
\begin{equation*}
\int_{U_{s}} \phi^{\frac{p(n-1)}{n-p}} d\|V\| \leq c \|\nabla\phi\|_{L^p}
\|\phi\|_{L^{np/(n-p)}}^{n(p-1)/(n-p)}
\end{equation*}
for any function $\phi\in C^1_c(U_{s};\R^+)$ and we have the same 
inequality for $v\in W^{1,p}(U_1)$ by the density argument. Thus we have (6).
By the well-known property of varifold having the generalized mean curvature in $L^q$
with $q>n-1$ (see \cite[17.9(1)]{LS83}), ${\rm spt}\,\|V\|$ coincide with
the measure-theoretic support $\Gamma$, so that we have (5) with $\Gamma={\rm spt}\,\|V\|$.
This concludes the proof of Theorem \ref{maintheorem}.
\end{proof}
\section{Concluding remarks}
In \cite{LST1,KT16}, we studied the singular perturbation problem for
\begin{equation}
\partial_t u_{\e} +v_\e \cdot \nabla u_\e= \Delta u_\e - \frac{W'(u_\e)}{\e^2} \label{51}
\end{equation}
and proved that the time-parametrized family of limit varifolds satisfies 
the motion law of ``normal velocity $=$
mean curvavture vector + $v^{\perp}$'' in a weak formulation (see \cite{Ilmanen,TA} for
the case of 
$v_\e=0$). In these works, we assumed
that the prescribed initial data satisfies a boundedness of the 
upper density ratio. Part of
the difficulty was to show that the upper density ratio bound can be controlled locally in 
time and uniformly with respect to $\e$. For the equilibrium problem, it is certainly not natural to assume such an upper density ratio estimate. It is interesting to see if one can drop the upper
density ratio assumption for the initial data in the proof of \cite{LST1,KT16}. 

The vectorial prescribed mean curvature problem as in Theorem \ref{vmcp} 
seems, as far as we know, little studied
so far. Traditionally, the prescription is the scalar version, i.e. given a scalar function
(or constant) $f$, one looks for a hypersurface satisfying $H\cdot\nu=f$, where $\nu$ is the normal 
unit vector. The vectorial version is physically natural from the view point of force 
balance, in that the problem seeks the equality between the surface
tension force and an external force acting on the surface. It must be said that the
prescribed vector field in Theorem \ref{vmcp} is the gradient of a potential $\rho$, 
and not a general vector
field. This is rather restrictive for applications and it is interesting to know if there can 
be a remedy for generalizations. If there may not exist a variational framework such as
the min-max method to 
find solutions of \eqref{01}, it should be still useful to have this 
diffused interface approach since the functional is well-behaved functional-analytically.
As a further question, it is also interesting to investigate the asymptotic behavior of stable critical 
points of $F_{\e}$ in the proof of Theorem \ref{vmcp}, since we have a very
successful analogy in \cite{T05,TW12}. In this direction, we mention that a 
construction of prescribed scalar mean curvature
hypersurfaces along the lines suggested here has been announced recently \cite{Bel2} (see also \cite{Bel1}).

\end{document}